\newcommand\supp{\operatorname{supp}}
\newcommand\card{\operatorname{card}}
\newtheorem{prop}{Proposition}
\newtheorem{thm}[prop]{Theorem}
\newtheorem{rem}{Remark}
\newtheorem{cor}{Corollary}
\newtheorem{lem}[prop]{Lemma}
\newtheorem{res}{Result}
\begin{document}

\title[Barely supercritical Schr\"odinger equations]
{On Jensen-type inequalities for nonsmooth radial scattering solutions of a loglog energy-supercritical Schr\"odinger equation
}

\author{Tristan Roy}
%\thanks{$^{*}$ \underline{Note}: this is an update of the published work. Minor corrections + a technical error fixed in Section \ref{Sec:Appendix}. See also
%Remark \ref{Rem:LocalWell} and footnote just above Proposition \ref{Prop:LocalWell} of \cite{triroyfoc}  }

\address{Graduate School of Mathematics. Nagoya University, Japan, \& Department of Mathematics, American University of Beirut, Lebanon}
\email{tr14@aub.edu.lb}

\begin{abstract}
We prove scattering of solutions of the loglog energy-supercritical Schr\"odinger equation
$ i \partial_{t} u + \triangle u = |u|^{\frac{4}{n-2}} u g(|u|) $ with
$g(|u|) := \log^{\gamma} {( \log{(10+|u|^{2})} )} $, $0 < \gamma < \gamma_{n}$, $n \in \{3,4,5 \}$, and with radial data
$u(0) := u_{0} \in \tilde{H}^{k}:= \dot{H}^{k} (\mathbb{R}^{n}) \cap \dot{H}^{1} (\mathbb{R}^{n})$, where $ \frac{n}{2} \geq k > 1$
(resp. $\frac{4}{3} > k > 1$ ) if $n \in \{3,4\}$ (resp. $n=5$). The proof uses concentration techniques (see e.g \cite{bourg,taorad}) to prove a long-time Strichartz-type estimate on an arbitrarily long time interval $J$ depending on an \textit{a priori} bound of some norms of the solution, combined with an induction on time of the Strichartz estimates in order to bound these norms \textit{a posteriori} (see e.g \cite{triroywave,triroyschrod}). We also revisit the scattering theory of solutions with radial data
in $\tilde{H}^{k}$, $k > \frac{n}{2}$, and $n \in \{3,4 \}$: more precisely, we prove scattering for a larger range of $\gamma$ s than in \cite{triroyschrod}. In order to control the barely supercritical nonlinearity for nonsmooth solutions, i.e. solutions with data in $\tilde{H}^{k}$, $k \leq \frac{n}{2}$, we prove some Jensen-type inequalities.

 %(see e.g \cite{gross}) .
\end{abstract}

\maketitle

\section{Introduction}

We shall study the solutions of the following Schr\"odinger equation in dimension $n$, $n \in \{3,4,5 \}$:

\begin{equation}
\begin{array}{ll}
i \partial_{t} u + \triangle u & = |u|^{\frac{4}{n-2}} u g(|u|)
\end{array}
\label{Eqn:BarelySchrod}
\end{equation}
with $g(|u|):= \log^{\gamma}{( \log{ (10 + |u|^{2}) } ) }$, $\gamma > 0 $. This equation has many connections with the following power-type Schr\"odinger equation, $p>1$

\begin{equation}
\begin{array}{ll}
i \partial_{t} v + \triangle v & = |v|^{p-1} v
\end{array}
\label{Eqn:Schrodpowerp}
\end{equation}
(\ref{Eqn:Schrodpowerp}) has a natural scaling: if $v$ is a solution of (\ref{Eqn:Schrodpowerp}) with data $v(0):=v_{0}$ and if $\lambda \in
\mathbb{R}$ is a parameter then $v_{\lambda}(t,x) := \frac{1}{\lambda^{\frac{2}{p-1}}} v \left( \frac{t}{\lambda^{2}}, \frac{x}{\lambda}
\right)$ is also a solution of (\ref{Eqn:Schrodpowerp}) but with data $v_{\lambda}(0,x):= \frac{1}{\lambda^{\frac{2}{p-1}}} v_{0} \left(
\frac{x}{\lambda} \right)$. If $s_{p} := \frac{n}{2}- \frac{2}{p-1}$ then the $\dot{H}^{s_{p}}$ norm of the initial data is invariant under the
scaling: this is why (\ref{Eqn:Schrodpowerp}) is said to be $\dot{H}^{s_{p}}$- critical. If $p=1 + \frac{4}{n-2}$ then (\ref{Eqn:Schrodpowerp})
is $\dot{H}^{1}-$ (or energy-) critical. The energy-critical Schr\"odinger equation
\begin{equation}
\begin{array}{ll}
i \partial_{t} u + \triangle u & = |u|^{\frac{4}{n-2}} u
\end{array}
\label{Eqn:EnergyCrit}
\end{equation}
has received a great deal of attention. Cazenave and Weissler \cite{cazweiss} proved the local well-posedness of (\ref{Eqn:EnergyCrit}): given
any $u(0)$ such that $\| u(0) \|_{\dot{H}^{1}} < \infty$ there exists, for some positive $t_{0}$ close to zero, a unique $ u \in \mathcal{C} ( [0,t_{0}],
\dot{H}^{1} ) \cap L_{t}^{ \frac{2(n+2)}{n-2}} L_{x}^{\frac{2(n+2)}{n-2}} ( [0,t_{0}] )$ satisfying (\ref{Eqn:EnergyCrit}) in the sense of
distributions, hence

\begin{equation}
\begin{array}{ll}
u(t) & = e^{it \triangle} u(0) - i \int_{0}^{t} e^{i(t-t^{'}) \triangle} \left[  |u(t')|^{\frac{4}{n-2}} u(t') \right] \, dt^{'} \cdot
\end{array}
\label{Eqn:DistribSchrod}
\end{equation}
The long-time behavior of radial solutions of (\ref{Eqn:EnergyCrit}) has been studied by several authors. Bourgain \cite{bourg} proved global
well-posedness (i.e global existence) and scattering (i.e linear asymptotic behavior) of the solutions in the class $\mathcal{C} \left( \mathbb{R}, \dot{H}^{1} \right) \cap L_{t}^{\frac{2(n+2)}{n-2}} L_{x}^{\frac{2(n+2)}{n-2}} (\mathbb{R})$ in dimension $n \in \{3,4\}$. He also proved this fact that for smoother solutions. Another proof was given by Grillakis \cite{grill} in dimension $n=3$. The result in the class mentioned above was extended to
higher dimensions (i.e $n \geq 5$) by Tao \cite{taorad}.\\
\\
%The radial assumption for $n=3$ was removed by Colliander-Keel-Staffilani-Takaoka-Tao
%cite{collkeelstafftaktao}. This result was extended to $n=4$ by Rickman-Visan \cite{rickmanvisan} and to $n \geq 5$ by Visan \cite{visan}.
If $p > 1+ \frac{4}{n-2}$ then $s_{p} > 1$ and we are in the energy supercritical regime. Since for all $\epsilon > 0$ there exists $c_{\epsilon} > 0$ such that $ \left| |u|^{\frac{4}{n-2}} u g(|u|) \right| \leq c_{\epsilon} \max{(1, |u|^{\frac{4}{n-2}+ 1 + \epsilon} ) }$ then the nonlinearity
of (\ref{Eqn:BarelySchrod}) is said to be barely supercritical. Barely supercritical equations have been studied extensively in the literature:
see e.g \cite{triroywave, triroybar, triroyschrod, shih, taorad, taowave}.
\\
\\
The global well-posedness and scattering of radial solutions of (\ref{Eqn:BarelySchrod}) lying in $\tilde{H}^{k}$ for $n \in \{3,4\}$ and $k > \frac{n}{2}$ was proved in \cite{triroyschrod} for a range of positive $\gamma$ s. \\
\\
In this paper we are primarily interested in establishing global well-posedness and scattering results of nonsmooth solutions of (\ref{Eqn:BarelySchrod}) for $n \in \{ 3,4,5 \}$. By nonsmooth solutions of (\ref{Eqn:BarelySchrod}) we mean solutions of (\ref{Eqn:BarelySchrod}) lying
in $\tilde{H}^{k}$ with $k \leq  \frac{n}{2}$  $($ The Sobolev embedding says that a function $f$ is continuous if it lies in $\tilde{H}^{k}$, $k > \frac{n}{2}$, but not necessarily if  $ k \leq \frac{n}{2}$. Hence the terminology `` nonsmooth '' $)$. The local well-posedness theory for nonsmooth solutions of (\ref{Eqn:BarelySchrod}) can be formulated as follows:

\begin{prop}
Let $n \in \{3,4,5 \}$. Let $ \frac{n}{2} \geq k > 1$ (resp. $ \frac{4}{3} > k > 1$) if $n \in \{ 3,4 \}$ (resp. $n=5$). Let $M$ be such that $\| u_{0} \|_{\tilde{H}^{k}} \leq M$. Then there exists $\delta:= \delta(M) > 0$  that has the following property: if $T_{l}$ is a number such that
$ T_{l} > 0 $ and

\begin{equation}
\begin{array}{ll}
\| e^{i t \triangle} u_{0}  \|_{L_{t}^{\frac{2(n+2)}{n-2}} L_{x}^{\frac{2(n+2)}{n-2}} ([0,T_{l}]) } & \leq \delta
\end{array}
\label{Eqn:CdtionInit}
\end{equation}
then there exists a unique

\begin{equation}
\begin{array}{l}
u \in \mathcal{C}([0,T_{l}], \tilde{H}^{k}) \cap \mathcal{B} \left( L_{t}^{\frac{2(n+2)}{n-2}} L_{x}^{\frac{2(n+2)}{n-2}} ([0,T_{l}]) ; 2 \delta \right)
\cap L_{t}^{\frac{2(n+2)}{n}} D^{-1} L_{x}^{\frac{2(n+2)}{n}} ([0,T_{l}]) \\
\cap L_{t}^{\frac{2(n+2)}{n}} D^{-k} L_{x}^{\frac{2(n+2)}{n}}
([0,T_{l}])
\end{array}
\nonumber
\end{equation}
such that

\begin{equation}
\begin{array}{l}
u(t)=e^{i t \triangle} u_{0} - i \int_{0}^{t} e^{i(t- t^{'}) \triangle} \left( |u(t^{'})|^{\frac{4}{n-2}} u(t^{'}) g(|u(t^{'})|) \right) \,
dt^{'}
\end{array}
\label{Eqn:DistribSchrodg}
\end{equation}
is satisfied in the sense of distributions. Here $D^{-\alpha} L^{r}:=\dot{H}^{\alpha, r}$ is endowed with the norm $\| f \|_{D^{-\alpha} L^{r}} := \| D^{\alpha} f \|_{L^{r}}$
and $\mathcal{B} \left( L_{t}^{\frac{2{(n+2)}}{n-2}} L_{x}^{\frac{2(n+2)}{n-2}} ([0,T_{l}]);  \bar{r} \right)$ denotes
the closed ball centered at the origin with radius $\bar{r}$ in $L_{t}^{\frac{2(n+2)}{n-2}} L_{x}^{\frac{2(n+2)}{n-2}} ([0,T_{l}])$.
\label{Prop:LocalWell}
\end{prop}

\begin{rem}
$T_{l}$ is called a time of local existence.
\end{rem}

\begin{rem}
Notice that the same proposition as Proposition \ref{Prop:LocalWell} was proved in \cite{triroyschrod} except that the range of $n$ and that of $k$ are replaced with
$ n \in \{ 3,4 \} $ and $ k > \frac{n}{2} $.
\end{rem}

\begin{rem}
In the sequel we call an $\tilde{H}^{k}-$ solution a solution of (\ref{Eqn:BarelySchrod}) that is constructed by Proposition \ref{Prop:LocalWell}.
\end{rem}

The proof of Proposition \ref{Prop:LocalWell} is given in the appendix. This allows (by a standard procedure) to define the notion of maximal time interval of existence $I_{max}$, that is the union of all the open intervals $I$ containing $0$ such that (\ref{Eqn:DistribSchrodg}) holds in the class $ \mathcal{C}( I, \tilde{H}^{k}) \cap L_{t}^{\frac{2(n+2)}{n}} D^{-1} L_{x}^{\frac{2(n+2)}{n}} (I) \cap L_{t}^{\frac{2(n+2)}{n}} D^{-k} L_{x}^{\frac{2(n+2)}{n}} (I) $. The following property holds:

\begin{prop}
If $|I_{max}|< \infty $ then

\begin{equation}
\begin{array}{ll}
\| u \|_{L_{t}^{\frac{2(n+2)}{n-2}} L_{x}^{\frac{2(n+2)}{n-2}} (I_{max})} & = \infty
\end{array}
\end{equation}
\label{Prop:GlobWellPosedCrit}
\end{prop}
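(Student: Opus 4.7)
The plan is to argue by contradiction. Suppose $|I_{max}| < \infty$ while $\|u\|_{L_t^{\frac{2(n+2)}{n-2}} L_x^{\frac{2(n+2)}{n-2}}(I_{max})}$ is finite, and let $T^{*} := \sup I_{max}$. The goal is to show that the $\tilde{H}^{k}$ norm of $u(t)$ stays uniformly bounded on $I_{max}$, so that Proposition \ref{Prop:LocalWell} can be applied with initial data $u(t)$ for some $t$ close enough to $T^{*}$ to extend the solution on an interval $[t,t+T_{l}]$ with $t+T_{l} > T^{*}$, contradicting the maximality of $I_{max}$.

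First I would partition $I_{max}$ into finitely many consecutive subintervals $J_{1},\dots,J_{N}$ on each of which the space-time norm $\|u\|_{L_t^{\frac{2(n+2)}{n-2}} L_x^{\frac{2(n+2)}{n-2}}(J_{i})}$ is below a threshold $\eta$ to be fixed. On each $J_{i}=[t_{i-1},t_{i}]$ I would use the Duhamel formula \eqref{Eqn:DistribSchrodg} together with Strichartz estimates on the three admissible pairs appearing in Proposition \ref{Prop:LocalWell} to bound the full Strichartz quantity $\|u\|_{S^{k}(J_{i})}$ by $\|u(t_{i-1})\|_{\tilde{H}^{k}}$ plus a dual-Strichartz norm of the nonlinearity $|u|^{\frac{4}{n-2}} u\, g(|u|)$. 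Using a fractional product/chain rule and the barely supercritical bound $g(|u|) \lesssim_{\epsilon} 1 + |u|^{\epsilon}$, the latter should be estimated by a product of $\eta^{\frac{4}{n-2}}$ (coming from the small space-time norm) and a slowly growing function of $\|u\|_{S^{k}(J_{i})}$. Choosing $\eta$ small enough, a bootstrap argument would then absorb this contribution and yield $\|u\|_{S^{k}(J_{i})} \leq C\|u(t_{i-1})\|_{\tilde{H}^{k}}$. Iterating over $i=1,\dots,N$ would produce a uniform bound $\sup_{t\in I_{max}} \|u(t)\|_{\tilde{H}^{k}} \leq M_{*}<\infty$.

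Given this \emph{a priori} bound, I would then choose $t<T^{*}$ close enough to $T^{*}$ so that the free-evolution Strichartz norm $\|e^{i(s-t)\triangle}u(t)\|_{L_s^{\frac{2(n+2)}{n-2}} L_x^{\frac{2(n+2)}{n-2}}([t,t+T_{l}])}$ falls below the $\delta(M_{*})$ threshold of Proposition \ref{Prop:LocalWell} for some $T_{l}$ with $t+T_{l}>T^{*}$ (for $n=5$ the extra requirement $T_{l}\ll 1$ is harmless since $t$ can be taken arbitrarily close to $T^{*}$). Applying Proposition \ref{Prop:LocalWell} with data $u(t)$ produces an $\tilde{H}^{k}$-solution on $[t,t+T_{l}]$; gluing it to the existing solution on $I_{max}\cap[0,t]$ extends $u$ strictly past $T^{*}$, contradicting the definition of $I_{max}$.

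The main obstacle is the nonlinear Strichartz estimate inside the iteration for unbounded solutions, i.e.\ for $k\leq n/2$. A naive fractional chain rule applied to $g(|u|)$ is problematic because $g$ grows (slowly but unboundedly) in $|u|$, and, contrary to the setting of \cite{triroyschrod}, one no longer has $L^{\infty}$ control of $u$ via Sobolev embedding. This is precisely where the Jensen-type inequalities advertised in the abstract are needed: they allow the slowly varying factor $g(|u|)$ to be pulled out of appropriate averages and treated as an essentially constant multiplier at each dyadic scale, so that the genuine small parameter driving the bootstrap is the $L_t^{\frac{2(n+2)}{n-2}} L_x^{\frac{2(n+2)}{n-2}}$ norm of $u$ on $J_{i}$. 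The remaining ingredients---counting the number of intervals in terms of the assumed finite scattering norm, and making the free-evolution norm small near $T^{*}$---are standard.
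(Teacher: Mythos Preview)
Your strategy is the paper's: argue by contradiction, iterate Strichartz control on short subintervals using the nonlinear estimate packaged in Corollary~\ref{Cor:Nonlin} (which is exactly where the Jensen-type inequalities enter), obtain a uniform $\tilde{H}^{k}$ bound, and then extend past the endpoint via the local theory. The one technical difference is in the partition. You propose a single fixed threshold $\eta$, whereas the paper takes \emph{decreasing} thresholds $\eta_j=\epsilon\,/\,g^{\frac{n-2}{4}+}\bigl((2\bar C)^{j}\|u_0\|_{\tilde{H}^{k}}\bigr)$ adapted to the anticipated growth $Y\le(2\bar C)^{j}\|u_0\|_{\tilde{H}^{k}}$ at step $j$; this makes the bootstrap close on each interval with no circularity, and the partition terminates because $\sum_j \eta_j^{\frac{2(n+2)}{n-2}}$ is a divergent harmonic-type series. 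Your fixed-$\eta$ version can be pushed through, but you must note that the required smallness is $\eta^{\frac{4}{n-2}-}g\bigl((2\bar C)^{N}\|u_0\|_{\tilde{H}^{k}}\bigr)\ll 1$ with $N\approx \eta^{-\frac{2(n+2)}{n-2}}\|u\|^{\frac{2(n+2)}{n-2}}_{L_t^{\frac{2(n+2)}{n-2}}L_x^{\frac{2(n+2)}{n-2}}(I_{max})}$, a self-referential constraint that is solvable only because $g$ has loglog growth; as written, your ``choosing $\eta$ small enough'' step glosses over this dependence. The extension past $T^*$ is done the same way: the paper bounds $\|e^{i(t-\bar t)\triangle}u(\bar t)\|$ on $[\bar t,b_{max})$ by Duhamel plus the now-controlled nonlinear term, then uses monotone convergence to gain an $\epsilon$ of room beyond $b_{max}$.
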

Proposition \ref{Prop:GlobWellPosedCrit} is proved in Section \ref{Sec:PropGlob}. With this in mind, global well-posedness follows from an \textit{a priori} bound of the form

\begin{equation}
\begin{array}{l}
\| u \|_{L_{t}^{\frac{2(n+2)}{n-2}} L_{x}^{\frac{2(n+2)}{n-2}} ([-T,T]) } \leq f(T, \| u_{0} \|_{\tilde{H}^{k}})
\end{array}
\end{equation}
for arbitrarily large time $T>0$. In fact we shall prove that the bound does not depend on time $T$: this is the preliminary step to prove
scattering.\\
\\
In this paper we also revisit the asymptotic behavior of radial $\tilde{H}^{k}-$ solutions of (\ref{Eqn:BarelySchrod}) for $n \in \{3,4\}$ with $k > \frac{n}{2}$. In particular, we prove global well-posedness and scattering  of radial $\tilde{H}^{k} -$ solutions of (\ref{Eqn:BarelySchrod}) for a larger range of $\gamma$ s than in \cite{triroyschrod}.\\
\\
The main result of this paper is:

\begin{thm}
Let $n \in \{3,4,5 \}$. Let $I_n$ defined as follows: if $n \in \{3,4\}$ then $I_n := (1,\infty)$ and if
$n=5$ then  $I_n := \left( 1,\frac{4}{3} \right)$. Let $\gamma_n$ be defined as follows: $\gamma_3 = \frac{1}{2744} $, $\gamma_4 = \frac{1}{1600}$, and
$\gamma_5 = \frac{1}{3380}$. \\
The $\tilde{H}^{k}-$ solution of (\ref{Eqn:BarelySchrod}) with radial data $u(0):=u_{0} \in \tilde{H}^{k}$, $k \in I_n$, and
$0 < \gamma < \gamma_{n}$, exists for all time $T$. Moreover there exists a scattering state $u_{0,+} \in \tilde{H}^{k} $ such that

\begin{equation}
\begin{array}{ll}
\lim \limits_{t \rightarrow \infty} \| u(t) - e^{it \triangle} u_{0,+}  \|_{\tilde{H}^{k}} & = 0
\end{array}
\end{equation}
and there exists $C$ depending only on $\| u_{0} \|_{\tilde{H}^{k}}$ such that

\begin{equation}
\begin{array}{ll}
\| u \|_{L_{t}^{\frac{2(n+2)}{n-2}} L_{x}^{\frac{2(n+2)}{n-2}} (\mathbb{R}) } & \leq C ( \| u_{0} \|_{\tilde{H}^{k}} )
\end{array}
\end{equation}

\label{thm:main}
\end{thm}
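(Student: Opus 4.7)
By Proposition~\ref{Prop:GlobWellPosedCrit} and standard scattering theory, it suffices to establish the a priori space-time bound
\begin{equation*}
\| u \|_{L_t^{\frac{2(n+2)}{n-2}} L_x^{\frac{2(n+2)}{n-2}}([0,T])} \leq C\bigl( \| u_0 \|_{\tilde H^k} \bigr)
\end{equation*}
with $C$ independent of $T$. Following the scheme advertised in the abstract, I would set this up as a two-tiered bootstrap: a long-time Strichartz estimate proved by concentration techniques, conditional on a priori $L_t^\infty \tilde H^k$ control, followed by an a posteriori recovery of that control via an induction on time of Strichartz estimates.

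First, fix $M = \|u_0\|_{\tilde H^k}$, a time $T>0$, and a large $A \gg M$ to be chosen. Under the a priori hypothesis $\|u\|_{L_t^\infty \tilde H^k([0,T])} \leq A$, I would adapt the radial concentration arguments of Bourgain \cite{bourg} and Tao \cite{taorad}: partition $[0,T]$ into subintervals on which the critical Strichartz norm equals a small $\eta$, and bound the number of such pieces by an explicit function of $A$ using a radial Morrey-type dichotomy (either the space-time norm is already small, or the solution concentrates in a ball whose $\tilde H^k$ mass contradicts the a priori hypothesis). The critical nonlinearity $|u|^{4/(n-2)}u$ is handled by standard Strichartz estimates on each piece; the extra loglog factor $g(|u|)$ is the new difficulty in the unbounded regime $k \leq n/2$, and this is where the Jensen-type inequalities announced in the abstract enter, replacing the pointwise loss $g(|u|)$ by an averaged quantity controllable by the a priori $\tilde H^k$ bound. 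The outcome is a long-time Strichartz estimate $\|u\|_{L_t^{2(n+2)/(n-2)} L_x^{2(n+2)/(n-2)}([0,T])} \leq F(A)$ with $F$ depending only mildly on $A$.

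Next, I would turn this long-time bound into an a posteriori bound on $\|u\|_{L_t^\infty \tilde H^k([0,T])}$ in the spirit of \cite{triroywave,triroyschrod}. Slice $[0,T]$ into $N$ intervals on each of which the space-time norm is $\eta$, with $N$ controlled by a power of $F(A)$. On each slice, a Strichartz estimate at regularity $k$ (with the nonlinearity placed in the dual space $L_t^{2(n+2)/(n+4)} D^{-k} L_x^{2(n+2)/(n+4)}$) yields a multiplicative increment of the form $1 + C\eta^{\alpha}\log^{\gamma}\log(A)$ of the $L_t^\infty \tilde H^k$ norm relative to the preceding slice. Iterating over the $N$ slices, the accumulated factor remains $O(1)$ provided $\gamma < \gamma_n$; this improves the a priori bound by a factor of $2$, and a continuity argument removes the hypothesis. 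Scattering and the existence of $u_{0,+} \in \tilde H^k$ then follow from this uniform bound by the usual argument showing that $e^{-it\Delta}u(t)$ is Cauchy in $\tilde H^k$ as $t \to \infty$.

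The decisive obstacle is the quantitative bookkeeping in the last step: the number $N$ of Strichartz subintervals grows polynomially in $F(A)$, while each subinterval contributes a loglog loss $\log^\gamma\log(A)$, and the explicit thresholds $\gamma_3 = \tfrac{1}{2744}$, $\gamma_4 = \tfrac{1}{1600}$, $\gamma_5 = \tfrac{1}{3380}$ are exactly the borderlines where the two effects balance. Their dependence on $n$ reflects both the exponent $4/(n-2)$ and the dimensional constants entering the radial Sobolev and Jensen-type inequalities; propagating these constants through the entire iteration is the technical heart of the paper, particularly when $k \leq n/2$ where the Jensen inequalities, rather than an $L^\infty$ Sobolev embedding, are the only available tool for estimating $g(|u|)$.
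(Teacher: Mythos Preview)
Your two-tiered strategy---a long-time Strichartz estimate conditional on an a priori $\tilde H^k$ bound, followed by an induction on Strichartz intervals to recover that bound---is exactly the paper's route, and your identification of the Jensen-type inequalities as the replacement for the $L^\infty$ Sobolev embedding when $k\le n/2$ is on target.

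Two points where your description diverges from what the paper actually does. First, the mechanism you sketch for the long-time bound (``either the space-time norm is already small, or the solution concentrates in a ball whose $\tilde H^k$ mass contradicts the a priori hypothesis'') is not the Bourgain--Tao argument: there is no contradiction with the $\tilde H^k$ hypothesis. Instead, on each unexceptional subinterval one proves \emph{localized $L^2$ mass} concentration (Result~\ref{res:ConcentrationMass}), uses radial symmetry to push it to the origin (Result~\ref{res:ConcMassZero}), feeds this into a \emph{Morawetz}-type estimate (Lemma~\ref{lem:Morawest}, not Morrey) to show one subinterval has length comparable to the whole (Result~\ref{res:DistribInterv}), and then runs Bourgain's pigeonhole algorithm to count intervals. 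The a priori bound $M$ enters only through the powers of $g(M)$ accumulated in the error terms, and the output is the precise double-exponential shape $C_1^{C_1 g^{b_n+}(M)}$---the exponent $b_n$ is exactly what fixes $\gamma_n = 1/b_n$. Second, the bootstrap quantity in the paper is $X(J,u)$ (which includes the Strichartz pieces $\|D u\|_{L_t^2 L_x^{1_2^*}}$ and $\|D^k u\|_{L_t^2 L_x^{1_2^*}}$), not just $\|u\|_{L_t^\infty \tilde H^k}$; this matters because the Jensen-type inequalities (Proposition~\ref{Prop:Jensen}) require control of $\|D^{\bar k}u\|_{L_t^{q'}L_x^{r'}}$, which is interpolated from $X(J,u)$.
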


\begin{rem}
By symmetry $($ i.e if $t \rightarrow u(t,x)$ is a solution of (\ref{Eqn:BarelySchrod}) then
$t \rightarrow \bar{u}(-t,x)$ is a solution of (\ref{Eqn:BarelySchrod}) $)$ there exists $u_{0,-} \in \tilde{H}^{k}$ such that \\
$\lim  \limits_{t \rightarrow -\infty}  \left\| u(t) - e^{i t \triangle}  u_{0,-} \right\|_{\tilde{H}^{k}} =0$.
\end{rem}

\begin{rem}
If $n \in \{ 3,4 \}$ then this theorem and the Sobolev embeddings of $\tilde{H}^{p}$ into $\mathcal{C}^{m}$
(space of functions such that the derivatives of order smaller or equal to $m$ exist and are continuous) for
$p$ and $m$ integers properly chosen imply global results regarding
the regularity of the solutions. For example, the following result holds: if the data is
smooth and radial with enough decay at infinity to be in $\tilde{H}^{k}$ for a $k > \frac{n}{2}$ then for all
time we have a finite bound of the $L^{\infty}$ norm of the solution of (\ref{Eqn:BarelySchrod}). The following
result also holds: if the data is Schwartz and radial then for all time the solution is infinitely differentiable.
\end{rem}

\begin{rem}
If $n \in \{3,4 \}$ and $k > \frac{n}{2}$ global well-posedness and scattering for radial $\tilde{H}^{k} -$ solutions of (\ref{Eqn:BarelySchrod})
were already proved in \cite{triroyschrod} for $ 0 < \gamma < \frac{1}{5772} $ if $n=3$ and for $ 0 < \gamma < \frac{1}{8024}$ if $n=4$. Hence we extend our previous result by covering the range $1 < k \leq \frac{n}{2}$ for $n \in \{3,4\}$ and the range $1 < k < \frac{4}{3}$ for $n=5$. We also prove global well-posedness and scattering with radial data in $\tilde{H}^{k}$, $k > \frac{n}{2}$, for a larger range of $\gamma$ s.
\end{rem}

We set up some notation and recall some estimates. \\
\\
Unless otherwise specified, we let $p'$ be the conjugate of a positive number $p$, i.e $\frac{1}{p} + \frac{1}{p'}=1$.
We write $a \ll b$ (resp. $a \ll_{\alpha} b$) if there exists a positive constant $c \ll 1$(resp. $c:=c(\alpha) \ll 1 $) such that $a \leq c b$, $a \gg b$  (resp. $a \gg_{\alpha} b$) if there exists a positive
constant $C \gg 1$ (resp. $C:= C(\alpha) \gg 1$) such that $a \geq C b $, and $a \approx b$ (resp. $a \approx_{\alpha} b $) if there exists a positive
constant $C$ (resp. $C:=C(\alpha)$) such that $ \frac{1}{C} b \leq  a \leq C b$. If $c := c(\alpha)$ (resp. $C:=C(\alpha)$) but $\alpha$ is not an
important variable ( in the sense that it does not play an important role in the main argument) then for the sake of clarity we forget the dependence
on $ \alpha $ and we write $ a \ll b $ ( resp. $ a \gg b $ ) instead of $ a  \ll_{\alpha} b$ (resp. $a \gg_{\alpha} b$)
$($ e.g., the reader can check
that $C_{1}$ in Proposition \ref{prop:BoundLong} depends on the energy $E$ (see page~\pageref{Eqn:BoundLong}).  Since this dependence is not important, we do not take it into account $)$. The notation above naturally extend to $a \ll_{\alpha_{1},...,\alpha_{m}} b$ by letting the constants depending on $\alpha_{1}$,..., $\alpha_{m}$. If $x \in \mathbb{R} $ then $x \pm := x \pm \epsilon$ for $ 0 < \epsilon \ll 1$ and $\langle x \rangle:= (x^{2} +1)^{\frac{1}{2}}$. \\
\\
Let $f$ be a function depending on space. Let $u$ be a function depending on space and time. Unless
otherwise specified, for sake of simplicity, we do not mention in the sequel the spaces to which $f$ and  $u$
belong in the estimates: this exercise is left to the reader. Let $f_{h}$ denote the function defined by
$x \rightarrow f_{h}(x):=f(x-h)$. The pointwise dispersive estimate is $\| e^{i t \triangle} f  \|_{L^{\infty}} \lesssim \frac{1}{|t|^{\frac{n}{2}}} \| f \|_{L^{1}}$. Interpolating with $\| e^{it \triangle} f \|_{L^{2}} = \| f \|_{L^{2}}$ we have the well-known generalized pointwise dispersive estimate:

\begin{equation}
\begin{array}{ll}
\| e^{it \triangle} f \|_{L^{p}} & \lesssim \frac{1}{|t|^{n \left( \frac{1}{2} - \frac{1}{p} \right)}} \| f
\|_{L^{p'}},
\end{array}
\label{Eqn:DispIneq}
\end{equation}
with $2 \leq p \leq \infty$. \\
Let $r > 1 $ and let $m$ be a positive number such that $m < \frac{n}{r}$. We denote by $m_{r}^{*}$ the number that satisfies
$\frac{1}{m_{r}^{*}} = \frac{1}{r} - \frac{m}{n}$. Let $\bar{k}$ be a constant such that
$1 < \bar{k} < \min \left( \frac{n}{2}, k \right)$. We recall the Sobolev inequalities:

\begin{equation}
\begin{array}{ll}
\| f \|_{L^{\bar{k}_{2}^{*}}} & \lesssim  \| f \|_{\tilde{H}^{k}}, \; \text{and} \\
\| f \|_{L^{m_{r}^{*}}} & \lesssim  \| D^{m} f \|_{L^{r}} \cdot
\end{array}
\label{Eqn:SobolevIneq1}
\end{equation}
We also have

\begin{equation}
\begin{array}{l}
k > \frac{n}{2}: \;  \| f \|_{L^{\infty}} \lesssim \| f \|_{\tilde{H}^{k}} \cdot
\end{array}
\label{Eqn:Sobklarge}
\end{equation}
Let $(\bar{Q},\bar{R})$ be the following

\begin{equation}
\left( \bar{Q},\bar{R} \right) :=
\left\{
\begin{array}{l}
(4+, \infty-) \; \text{if} \; n=3 \\
(2+, \infty-) \; \text{if} \; n=4 \\
(2+, 10-) \; \text{if} \; n=5
\end{array}
\right.
\nonumber
\end{equation}
Let $(\breve{Q}, \breve{R})$ be the following

\begin{equation}
\left( \breve{Q}, \breve{R} \right) :=
\left\{
\begin{array}{l}
(1,2) \; \text{if} \; n \in \{3,4\} \\
\left( \frac{3}{2},\frac{30}{19} \right) \; \text{if} \; n=5
\end{array}
\right.
\nonumber
\end{equation}

Let $J$ be an interval. Let $X(J,u)$ and $Y(J,u)$ denote the following
\begin{equation}
\begin{array}{ll}
X(J,u) := &  \| u \|_{L_{t}^{\infty} \tilde{H}^{k} (J)}  + \| D u \|_{L_{t}^{2} L_{x}^{1_{2}^{*}}(J)} \\
& + \| D^{k} u \|_{L_{t}^{2} L_{x}^{1_{2}^{*}}(J)}, \; \text{and}
\end{array}
\nonumber
\end{equation}

\begin{equation}
\begin{array}{ll}
Y(J,u) := &  \| u \|_{L_{t}^{\infty} \tilde{H}^{k}(J)}  + \| D u \|_{L_{t}^{\frac{2(n+2)}{n}} L_{x}^{\frac{2(n+2)}{n}} (J) } \\
& + \| D^{k} u \|_{L_{t}^{\frac{2(n+2)}{n}} L_{x}^{\frac{2(n+2)}{n}} (J) } \cdot
\end{array}
\nonumber
\end{equation}
We recall the two propositions:

\begin{prop}{\cite{triroyschrod}}
Let $ 0  \leq \alpha < 1$, $k'$ and $\beta$ be integers such that $k' \geq 2$ and $\beta > k'-1 $, $(r ,r_{2}) \in (1,\infty)^{2}$,
$(r_{1},r_{3}) \in (1, \infty]^{2}$ be such that $\frac{1}{r}=
\frac{\beta}{r_{1}} + \frac{1}{r_{2}} +\frac{1}{r_{3}}$. Let $F: \mathbb{R}^{+} \rightarrow \mathbb{R}$ be a $C^{k'}$- function and let $G:=\mathbb{R}^{2} \rightarrow \mathbb{R}^{2}$ be a $C^{k'}$- function such that

\begin{equation}
\begin{array}{l}
F^{[i]}(x) =O \left( \frac{F(x)}{x^{i}} \right), \; \tau \in [0,1]: \;
\left| F \left( |\tau x + (1-\tau)y|^{2} \right) \right|
\lesssim
\left| F(|x|^{2}) \right| + \left| F(|y|^{2}) \right|,
\end{array}
\label{Eqn:Cdtionf}
\end{equation}
and

\begin{equation}
G^{[i]}(x,\bar{x})  = O (|x|^{\beta + 1 -i})
\label{Eqn:CdtionG}
\end{equation}
for $ 0 \leq i \leq k'$. Then

\begin{equation}
\begin{array}{ll}
\left\| D^{ k' -1 + \alpha} ( G(f,\bar{f}) F(|f|^{2}) \right\|_{L^{r}} & \lesssim \| f \|^{\beta}_{L^{r_{1}}} \| D^{k' -1  + \alpha} f \|_{L^{r_{2}}}
\| F(|f|^{2}) \|_{L^{r_{3}}}
\end{array}
\label{Eqn:EstToProveFrac}
\end{equation}
Here $F^{[i]}$ and $G^{[i]}$ denote the $i^{th}-$ derivatives of $F$ and $G$, respectively.
\label{Prop:NonlinFracSmooth}
\end{prop}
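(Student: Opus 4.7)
The plan is to establish \eqref{Eqn:EstToProveFrac} by induction on $k'$, combining the Kato--Ponce fractional Leibniz rule with a variant of the Christ--Weinstein fractional chain rule tailored to the two structural hypotheses \eqref{Eqn:Cdtionf} and \eqref{Eqn:CdtionG}. The guiding principle is that $G(f,\bar f)$ is polynomial-like of degree $\beta+1$ in $f$, while $F(|f|^{2})$ is slowly varying and is to be treated as essentially multiplicative, emerging intact as the $L^{r_{3}}$ factor on the right-hand side.

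First I would apply the fractional Leibniz rule to split
\[
D^{k'-1+\alpha}\bigl(G(f,\bar f)\,F(|f|^{2})\bigr) = \sum_{\sigma_{1}+\sigma_{2}\,=\,k'-1+\alpha} D^{\sigma_{1}}\bigl(G(f,\bar f)\bigr)\cdot D^{\sigma_{2}}\bigl(F(|f|^{2})\bigr).
\]
For the factor $D^{\sigma_{1}}(G(f,\bar f))$, iterated chain rule together with \eqref{Eqn:CdtionG} produces expressions of the form $|f|^{\beta+1-j}\prod_{i} D^{m_{i}} f$ with $j$ factors of orders summing to $\sigma_{1}$. For the factor $D^{\sigma_{2}}(F(|f|^{2}))$, the derivative bound in \eqref{Eqn:Cdtionf} allows us to trade each derivative of $F$ for a factor of $F(|f|^{2})/|f|^{2i}$ times derivatives of $|f|^{2}$; the resulting negative powers of $|f|$ are absorbed by the excess copies of $f$ coming from the $G^{[j]}$ terms, which exist because the hypothesis $\beta > k'-1$ leaves $\beta+1-k' > 0$ surplus powers of $f$ after all integer derivatives have been distributed. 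H\"older's inequality with the exponent identity $\tfrac{1}{r} = \tfrac{\beta}{r_{1}}+\tfrac{1}{r_{2}}+\tfrac{1}{r_{3}}$ then places one derivative of order $k'-1+\alpha$ on $f$ in $L^{r_{2}}$, $\beta$ remaining copies of $f$ in $L^{r_{1}}$, and $F(|f|^{2})$ in $L^{r_{3}}$. In the inductive step one isolates the top-order term $G^{[1]}(f,\bar f)\cdot D^{k'-1+\alpha}f\cdot F(|f|^{2})$, for which the argument is immediate from \eqref{Eqn:CdtionG} at $i=1$, and reduces the remaining lower-order terms to an estimate at level $k'-1$.

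The main obstacle will be the fractional part $D^{\alpha}$ with $0 < \alpha < 1$, especially when it falls on the composite $F(|f|^{2})$. The classical Christ--Weinstein chain rule requires $F$ to be H\"older of a compatible order, whereas here $F$ is only slowly varying in the sense of \eqref{Eqn:Cdtionf}. The convexity-type bound $F(|\tau x+(1-\tau)y|^{2})\lesssim F(|x|^{2})+F(|y|^{2})$ is precisely what replaces the missing H\"older estimate: via the pointwise representation of $D^{\alpha}$ as a weighted integral of difference quotients, it lets one dominate $|F(|f(x)|^{2})-F(|f(y)|^{2})|$ by $\bigl(F(|f(x)|^{2})+F(|f(y)|^{2})\bigr)\cdot\bigl||f(x)|^{2}-|f(y)|^{2}\bigr|/|f|^{2}$, which in the end inserts one $D^{\alpha}f$ into the $L^{r_{2}}$ slot and one $F(|f|^{2})$ into the $L^{r_{3}}$ slot, the $1/|f|^{2}$ cost being absorbed by the excess $f$-powers from the $G$-factor as above.
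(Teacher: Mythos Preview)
The paper does not prove this proposition; it is quoted from \cite{triroyschrod}. The closest thing in the present paper is the proof of Proposition~\ref{Prop:FracLeibn}, which is essentially the case $k'=2$ with a specific $F$, and the method there---write $D^{1+\alpha}=D^{\alpha}\nabla$, expand $\nabla$ by the classical product/chain rule, then apply the fractional product rule (\ref{Eqn:FracProd}) and the fractional chain rule (\ref{Eqn:DerivComp}), and finish with complex interpolation to control the intermediate derivatives---is presumably what the proof in \cite{triroyschrod} iterates for general $k'$. Your overall strategy (induction on $k'$, peel off integer derivatives, handle the fractional remainder via Christ--Weinstein) is in the same spirit and is the right one.

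Two points deserve tightening. First, the Kato--Ponce product rule does not yield a full Leibniz expansion $\sum_{\sigma_{1}+\sigma_{2}=s}D^{\sigma_{1}}(\cdot)\,D^{\sigma_{2}}(\cdot)$; it only bounds $\|D^{s}(fg)\|$ by the two extreme terms. The correct route is to write $D^{k'-1+\alpha}\approx D^{\alpha}\nabla^{k'-1}$, expand $\nabla^{k'-1}\bigl(G(f,\bar f)F(|f|^{2})\bigr)$ by the ordinary Leibniz and Fa\`a di Bruno formulas (this is where (\ref{Eqn:CdtionG}) and the derivative bound in (\ref{Eqn:Cdtionf}) enter, and where $\beta>k'-1$ guarantees enough surplus powers of $f$), and only then apply (\ref{Eqn:FracProd})--(\ref{Eqn:DerivComp}) to the remaining $D^{\alpha}$.

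Second, your account of how the convexity hypothesis on $F$ is used is slightly off. The Christ--Weinstein rule (\ref{Eqn:DerivComp}) needs $|H'(\tau x+(1-\tau)y)|\lesssim \tilde H(x)+\tilde H(y)$. For the composites that arise, say $H(x)=G^{[j]}(x,\bar x)F(|x|^{2})$, one has $|H'(x)|\lesssim |x|^{\beta-j}F(|x|^{2})$ from (\ref{Eqn:Cdtionf}) and (\ref{Eqn:CdtionG}); evaluating at $\tau x+(1-\tau)y$ and invoking the convexity bound on $F$ together with $|\tau x+(1-\tau)y|^{\beta-j}\lesssim |x|^{\beta-j}+|y|^{\beta-j}$ (valid since $\beta-j>0$) gives the required inequality with $\tilde H(x)=|x|^{\beta-j}F(|x|^{2})$. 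Your version instead bounds the real-variable difference $F(|f(x)|^{2})-F(|f(y)|^{2})$ via the mean value theorem, but the intermediate point in $[|f(y)|^{2},|f(x)|^{2}]$ is not of the form $|\tau f(x)+(1-\tau)f(y)|^{2}$, so the convexity hypothesis as stated does not apply directly there; the Christ--Weinstein formulation via the complex convex combination of $f(x)$ and $f(y)$ is the correct mechanism.
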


\begin{prop}{\cite{triroyschrod}}
Let $(\lambda_1,\lambda_2) \in \mathbb{N}^{2}$ be such that $\lambda_1 + \lambda_2 = \frac{n+2}{n-2}$. Let $J$ be an interval. Let $k > \frac{n}{2}$. Then there exists $\bar{C} >0$ such that
\begin{equation}
\begin{array}{ll}
\left\| D^{k}(u^{\lambda_1} \bar{u}^{\lambda_2} g(|u|) ) \right\|_{L_{t}^{\frac{2(n+2)}{n+4}} L_{x}^{\frac{2(n+2)}{n+4}}(J)} & \lesssim
\| u \|^{\frac{4}{n-2}}_{L_{t}^{\frac{2(n+2)}{n-2}} L_{x}^{\frac{2(n+2)}{n-2}}(J)} \langle  Y(J,u) \rangle^{\bar{C}}
\end{array}
\label{Eqn:EstHighReg}
\end{equation}
\label{Prop:EstHighReg}
\end{prop}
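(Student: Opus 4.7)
The plan is to distribute the derivative $D^{k}$ across the product $u^{\lambda_1}\bar u^{\lambda_2} g(|u|)$ using the fractional Leibniz rule together with the chain rule of Proposition~\ref{Prop:NonlinFracSmooth}, and to absorb the loglog factor $g(|u|)$ into $\langle Y(J,u)\rangle^{\bar C}$ via the Sobolev embedding (\ref{Eqn:Sobklarge}). First I would isolate the principal contribution in which all $k$ derivatives land on a single copy of $u$, and then handle the remaining terms in which $D^{k}$ is distributed across several factors or falls on $g(|u|)$.

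For the principal term, which is schematically $(D^{k}u)\,u^{\lambda_1-1}\bar u^{\lambda_2}\,g(|u|)$, applying H\"older in space--time with the splitting
\[
\frac{n+4}{2(n+2)} \;=\; \frac{n}{2(n+2)} \;+\; \frac{4}{2(n+2)}
\]
places $D^{k}u$ in $L_{t}^{\frac{2(n+2)}{n}} L_{x}^{\frac{2(n+2)}{n}}$, the remaining $\frac{4}{n-2}$ copies of $u$ in $L_{t}^{\frac{2(n+2)}{n-2}} L_{x}^{\frac{2(n+2)}{n-2}}$, and $g(|u|)$ in $L_{t}^{\infty} L_{x}^{\infty}$. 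Since $k>\frac{n}{2}$, (\ref{Eqn:Sobklarge}) gives $\|u(t)\|_{L^{\infty}} \lesssim \|u(t)\|_{\tilde H^{k}} \le Y(J,u)$, whence
\[
\|g(|u|)\|_{L_{t}^{\infty} L_{x}^{\infty}} \;\lesssim\; \bigl(1+\log\log\langle Y(J,u)\rangle\bigr)^{\gamma} \;\lesssim\; \langle Y(J,u)\rangle^{\bar C}
\]
for any $\bar C>0$; combined with $\|D^{k}u\|_{L_{t}^{\frac{2(n+2)}{n}} L_{x}^{\frac{2(n+2)}{n}}}\le Y(J,u)$, this gives the desired bound.

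For the remaining pieces I would invoke Proposition~\ref{Prop:NonlinFracSmooth} with $G(u,\bar u)=u^{\lambda_1}\bar u^{\lambda_2}$ (so $\beta+1=\frac{n+2}{n-2}$) and $F(x)=g(\sqrt x)=\log^{\gamma}(\log(10+x))$. Each differentiation of $F$ produces a factor of $(10+x)^{-1}\log^{-1}(10+x)$, so $F^{[i]}(x)=O(F(x)/x^{i})$, and the slow variation of $F$ yields $F(|\tau x+(1-\tau)y|^{2})\lesssim F(|x|^{2})+F(|y|^{2})$, so (\ref{Eqn:Cdtionf}) holds; (\ref{Eqn:CdtionG}) is immediate for the polynomial $G$. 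Choosing the H\"older triple in (\ref{Eqn:EstToProveFrac}) so that $r_{1}=\frac{2(n+2)}{n-2}$ carries the $\beta$ undifferentiated copies of $u$, $r_{2}=\frac{2(n+2)}{n}$ carries $D^{k}u$, and $r_{3}=\infty$ carries $F(|u|^{2})$, and bounding $\|F(|u|^{2})\|_{L_{t}^{\infty} L_{x}^{\infty}}$ exactly as above, reproduces the claimed estimate.

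The main obstacle is the integer constraint $\beta>k'-1$ in Proposition~\ref{Prop:NonlinFracSmooth}. It is comfortably satisfied for $n=3$ (where $\beta=4$, accommodating any $k<5$) but fails directly for $n=4$, where $\beta=2$ while $k>\frac{n}{2}=2$ forces $k'\geq 3$. In that case I would iterate the fractional Leibniz rule on the polynomial $u^{\lambda_1}\bar u^{\lambda_2}$ one factor at a time, reducing at each step to a chain rule of effective degree $\beta=1$ (to which Proposition~\ref{Prop:NonlinFracSmooth} applies at all orders), while the slowly varying factor $g(|u|)$ is always kept in $L_{t}^{\infty} L_{x}^{\infty}$ and absorbed into $\langle Y(J,u)\rangle^{\bar C}$.
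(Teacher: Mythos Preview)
The paper does not actually prove this proposition: it is quoted from \cite{triroyschrod} and used as a black box, so there is no in-paper argument to compare against. That said, your outline is exactly the expected proof. One application of Proposition~\ref{Prop:NonlinFracSmooth} with $G(z,\bar z)=z^{\lambda_1}\bar z^{\lambda_2}$, $F(x)=\log^{\gamma}(\log(10+x))$, and the H\"older exponents $(r_1,r_2,r_3)=\bigl(\tfrac{2(n+2)}{n-2},\tfrac{2(n+2)}{n},\infty\bigr)$ (in space and in time) gives the whole estimate; the Sobolev embedding (\ref{Eqn:Sobklarge}) then turns $\|F(|u|^{2})\|_{L_t^\infty L_x^\infty}$ into a power of $\langle Y(J,u)\rangle$. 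Your separate discussion of a ``principal term'' is therefore redundant: for noninteger $k$ there is no genuine pointwise splitting into $(D^{k}u)\cdot(\text{rest})$ plus remainder, and Proposition~\ref{Prop:NonlinFracSmooth} already absorbs all contributions at once.

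Your diagnosis of the $n=4$ obstruction is correct: there $\beta=2$ while $k>2$ forces $k'\ge 3$, so the hypothesis $\beta>k'-1$ of Proposition~\ref{Prop:NonlinFracSmooth} fails. The iterated-product-rule remedy you propose is the right idea, but your phrasing ``reducing at each step to a chain rule of effective degree $\beta=1$, to which Proposition~\ref{Prop:NonlinFracSmooth} applies at all orders'' is not quite right, since that proposition still requires $\beta>k'-1\ge 1$. What actually works is to peel off factors of $u$ one at a time using the product rule (\ref{Eqn:FracProd}), which is valid for all orders $\alpha_1\ge 0$; at the final step one is left with $D^{k}$ acting on a single factor of $u$ or on $g(|u|)$, and the latter is handled by writing $D^{k}=D^{\alpha}\nabla^{m}$, expanding $\nabla^{m}g(|u|)$ by the classical Fa\`a di Bruno formula (all derivatives of $\tilde g$ are bounded and go into $L^{\infty}$ via (\ref{Eqn:Sobklarge})), and then applying (\ref{Eqn:FracProd})--(\ref{Eqn:DerivComp}) once more for the residual $D^{\alpha}$.
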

We say that $(q,r)$ is admissible if $q > 2+$ and $\frac{1}{q} + \frac{n}{2r} = \frac{n}{4}$. Let
$(q_{1},r_{1})$ and $(q_{2},r_{2})$ be two bipoints that are admissible. Let $t_{0} \in J$. If $u$ is a solution of
$i \partial_{t} u + \triangle u = G$ on $J$ then the Strichartz estimates (see e.g \cite{keeltao}) yield

\begin{equation}
\begin{array}{l}
\| u \|_{L_{t}^{q_{1}} L_{x}^{r_{1}} (J)} \lesssim  \| u(t_0) \|_{L^{2}} + \| G \|_{L_{t}^{q'_{2}}
L_{x}^{r'_{2}}(J)} \cdot
\end{array}
\label{Eqn:Strich}
\end{equation}
We write

\begin{equation}
\begin{array}{ll}
u(t)= u_{l,t_{0}}(t) + u_{nl,t_{0}}(t),
\end{array}
\end{equation}
with $u_{l,t_{0}}$ denoting the linear part starting from $t_{0}$, i.e

\begin{equation}
\begin{array}{ll}
u_{l,t_{0}}(t) & : = e^{i(t-t_{0}) \triangle} u(t_{0}),
\end{array}
\end{equation}
and $u_{nl,t_{0}}$ denoting the nonlinear part starting from $t_{0}$, i.e

\begin{equation}
\begin{array}{ll}
u_{nl,t_{0}}(t) & := - i \int_{t_{0}}^{t} e^{i(t-s) \triangle} G(s) \,ds \cdot
\end{array}
\end{equation}
If $u$ is a solution of (\ref{Eqn:BarelySchrod}) on $J$ such that $u(t) \in \tilde{H}^{k}$, $t \in J$, then it has a finite energy

\begin{equation}
\begin{array}{ll}
E(u(t))  & := \frac{1}{2} \int_{\mathbb{R}^{n}} |\nabla u (t,x)|^{2} +  \int_{\mathbb{R}^{n}} F(u,\bar{u})(t,x) \, dx,
\end{array}
\label{Eqn:EnergyBarely}
\end{equation}
with

\begin{equation}
\begin{array}{ll}
F(z,\bar{z}) & := \int_{0}^{|z|} t^{\frac{n+2}{n-2}} g(t) \, dt \cdot
\end{array}
\end{equation}
Indeed

\begin{equation}
\begin{array}{ll}
\left| \int_{\mathbb{R}^{n}} F(u,\bar{u})(t,x) \, dx \right| & \lesssim
\| u(t) \|^{\bar{k}_2^{*}}_{L_{\bar{k}_2^{*}}} +  \| u(t) \|^{1_2^{*}}_{L_{1_2^{*}}} \\
&  \lesssim \langle \| u(t) \|_{\tilde{H}^{k}} \rangle^{\bar{k}_2^{*}}:
\end{array}
\nonumber
\end{equation}
this follows from a simple integration by parts

\begin{equation}
\begin{array}{ll}
F(z,\bar{z}) & \sim  |z|^{1_{2}^{*}} g(|z|),
\end{array}
\label{Eqn:EquivF}
\end{equation}
combined with $g(|f|) \lesssim 1 + |f|^{\bar{k}_{2}^{*} -1_{2}^{*}}$ and (\ref{Eqn:SobolevIneq1}). A simple computation shows that the energy is conserved, or, in other words, that $E(u(t))=E(u(0)) = E$ $($ More precisely, if $n \in \{ 3,4 \} $ (resp. $n=5$) then the identity (resp. a similar identity) holds for smooth
solutions (i.e solutions lying in Sobolev spaces with large exponents of (\ref{Eqn:BarelySchrod}) (resp. (\ref{Eqn:BarelySchrod}) ``smoothed'' by
smoothing the nonlinearity at the origin)). Then $E(u(t)) = E(u(0))$ holds for an $\tilde{H}^{k}-$ solution by a standard approximation argument with smooth solutions. $)$. Let $\chi$ be a smooth, radial function supported on $|x| \leq 2$ such that $\chi(x)=1$ if $|x| \leq 1$. If $x_{0} \in \mathbb{R}^{n}$, $R>0$ and $u$ is an $\tilde{H}^{k}-$ solution of (\ref{Eqn:BarelySchrod}) then we define the mass within the ball $B(x_{0},R)$

\begin{equation}
\begin{array}{ll}
\textrm{Mass} \left( B(x_{0},R),u(t)  \right) & :=  \left( \int_{\mathbb{R}^{n}} \chi \left( \frac{x-x_{0}}{R} \right)  |u (t,x) |^{2} \, dx \right)^{\frac{1}{2}}
\end{array}
\label{Eqn:MassControl0}
\end{equation}
Recall (see e.g \cite{taorad}) that

\begin{equation}
\begin{array}{ll}
\textrm{Mass} \left( B(x_{0},R), u(t)  \right) & \lesssim  R \,  \| \nabla u(t) \|_{L^{2}}
\end{array}
\label{Eqn:MassControl}
\end{equation}
and that its derivative satisfies $($ It is also well-known that if $u$ is a linear $\tilde{H}^{k}-$ solution
(that is a solution of the linear Schr\"odinger equation with data in $\tilde{H}^{k}$), then (\ref{Eqn:UpBdDerivM}) also holds. $)$

\begin{equation}
\begin{array}{ll}
\left| \partial_{t} \textrm{Mass}(u(t),B(x_{0},R)) \right| & \lesssim \frac{ \| \nabla u(t) \|_{L^{2}}}{R} \cdot
\end{array}
\label{Eqn:UpBdDerivM}
\end{equation}
We recall the following proposition:

\begin{prop}{\cite{triroyschrod}}
Let $u$ be a solution of (\ref{Eqn:BarelySchrod}) with data
$u_0 \in \tilde{H}^{k}$, $k > \frac{n}{2}$. Assume that $u$ exists globally in time and that
$\| u \|_{L_{t}^{\frac{2(n+2)}{n-2}} L_{x}^{\frac{2(n+2)}{n-2}} (\mathbb{R})} < \infty$. Then
$Y(\mathbb{R},u) < \infty$.
\label{Prop:PersReg}
\end{prop}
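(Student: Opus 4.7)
The plan is to partition $\mathbb{R}$ into finitely many subintervals on each of which the scattering norm is small, then bootstrap $Y$ on each subinterval using Strichartz estimates combined with Proposition \ref{Prop:EstHighReg}, and finally sum the resulting bounds.

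First I would use the hypothesis $S := \|u\|_{L_{t}^{\frac{2(n+2)}{n-2}} L_{x}^{\frac{2(n+2)}{n-2}}(\mathbb{R})} < \infty$ to write $\mathbb{R} = \bigcup_{i=0}^{N-1} J_{i}$ with $J_{i} = [t_{i}, t_{i+1}]$ and $\|u\|_{L_{t}^{\frac{2(n+2)}{n-2}} L_{x}^{\frac{2(n+2)}{n-2}}(J_{i})} \leq \eta$, for $\eta > 0$ to be chosen small and $N = N(\eta, S)$ finite. On each $J_{i}$, applying Strichartz (\ref{Eqn:Strich}) starting from $t_{i}$ at derivative levels $D^{1}$ and $D^{k}$ and using the admissible pair $\bigl(\frac{2(n+2)}{n}, \frac{2(n+2)}{n}\bigr)$, the linear parts contribute $\|u(t_{i})\|_{\tilde{H}^{k}}$, while the nonlinear parts are controlled by the dual-Strichartz norms $\|D^{j}(|u|^{\frac{4}{n-2}} u g(|u|))\|_{L_{t}^{\frac{2(n+2)}{n+4}} L_{x}^{\frac{2(n+2)}{n+4}}(J_{i})}$ for $j \in \{1,k\}$, which Proposition \ref{Prop:EstHighReg} (and a parallel argument at the $D^{1}$ level via Proposition \ref{Prop:NonlinFracSmooth}) bound by $\eta^{\frac{4}{n-2}} \langle Y(J_{i}, u)\rangle^{\bar{C}}$.

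This yields the bootstrap inequality
\begin{equation}
Y(J_{i}, u) \leq C_{1} \|u(t_{i})\|_{\tilde{H}^{k}} + C_{2}\, \eta^{\frac{4}{n-2}} \langle Y(J_{i}, u)\rangle^{\bar{C}}.
\nonumber
\end{equation}
Since $t \mapsto Y([t_{i}, t], u)$ is continuous on the existence interval with $Y([t_{i}, t_{i}], u) = \|u(t_{i})\|_{\tilde{H}^{k}}$ by the local theory (Proposition \ref{Prop:LocalWell}), the standard continuity/bootstrap argument yields $Y(J_{i}, u) \leq 2 C_{1} \|u(t_{i})\|_{\tilde{H}^{k}}$ provided $\eta$ is small enough that $C_{2}\, \eta^{\frac{4}{n-2}} \langle 2 C_{1} \|u(t_{i})\|_{\tilde{H}^{k}}\rangle^{\bar{C}} \leq \|u(t_{i})\|_{\tilde{H}^{k}}$. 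Iterating over $i$, this gives $\|u(t_{i+1})\|_{\tilde{H}^{k}} \leq 2 C_{1} \|u(t_{i})\|_{\tilde{H}^{k}} \leq (2 C_{1})^{i+1} \|u_{0}\|_{\tilde{H}^{k}}$, so that after finitely many iterations $Y(\mathbb{R}, u) \leq \sum_{i=0}^{N-1} Y(J_{i}, u) \leq N (2C_{1})^{N} \|u_{0}\|_{\tilde{H}^{k}} < \infty$.

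The main obstacle is the self-consistent choice of $\eta$: the bootstrap on $J_{i}$ requires $\eta$ small depending on the growing bound on $\|u(t_{i})\|_{\tilde{H}^{k}}$, while the number $N$ of intervals in the initial partition grows as $\eta$ shrinks. This is handled by refining the partition adaptively rather than uniformly: one sequentially processes the intervals, and within each $J_{i}$ one further subdivides so that the local scattering norm is smaller than $\eta_{i} = c(\|u(t_{i})\|_{\tilde{H}^{k}})$, which is legitimate because the already-controlled total scattering norm on $J_{i}$ is finite and so only finitely many sub-intervals are needed. The $\tilde{H}^{k}$ norm grows by a controlled multiplicative factor across each sub-interval, the number of sub-intervals at every stage remains finite, and summing over all (finitely many) pieces yields the desired finite bound on $Y(\mathbb{R}, u)$.
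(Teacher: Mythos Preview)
Your proposal is correct and follows the standard persistence-of-regularity argument. Note that the paper does not actually prove this proposition here---it is quoted from \cite{triroyschrod}---but your argument is precisely the one used there, and the same technique appears in this paper's Section~\ref{Sec:PropGlob} (proof of Proposition~\ref{Prop:GlobWellPosedCrit}) for the closely related statement that $Y(I_{max},u)<\infty$ when the scattering norm is finite.

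One minor stylistic difference: rather than first taking a uniform partition with threshold $\eta$ and then refining adaptively inside each $J_i$, the paper (in Section~\ref{Sec:PropGlob}) directly constructs the partition with decreasing thresholds $\eta_j = \epsilon\, g^{-(\frac{n-2}{4}+)}\bigl((2\bar{C})^{j}\|u_0\|_{\tilde{H}^k}\bigr)$ from the outset, and verifies that finitely many such intervals suffice because $\sum_j \eta_j^{\frac{2(n+2)}{n-2}}$ diverges. This avoids the two-step refinement you describe, but the content is identical: in both cases the point is that the threshold on $J_j$ is chosen small relative to the accumulated bound $(2\bar{C})^{j}\|u_0\|_{\tilde{H}^k}$, the continuity argument closes on each piece, and the total number of pieces is finite because the scattering norm is.
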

We now explain the main ideas of this paper.\\
\\
In Section \ref{Sec:Thmmain} we prove the main result of this paper, i.e Theorem \ref{thm:main}. The
proof relies upon the following bound of $\| u \|_{L_{t}^{\bar{Q}} L_{x}^{\bar{R}} }$ on an arbitrarily long time interval:

\begin{prop}
Let $u$ be a radial $\tilde{H}^{k} -$ solution of (\ref{Eqn:BarelySchrod}) on an interval $J:=[a,b]$. There exists a constant $C_{1} \gg  1$ such that
if $X(J,u) \leq M$ for some $M \gg 1$, then

\begin{equation}
\begin{array}{ll}
\| u \|^{\bar{Q}}_{ L_{t}^{\bar{Q}} L_{x}^{\bar{R}} (J)} & \leq  C_{1}^{C_{1}
g^{b_{n}+}(M)}
\end{array}
\label{Eqn:BoundLong}
\end{equation}
with $b_{n}$ such that

\begin{equation}
\begin{array}{l}
b_{n} := \left\{
\begin{array}{l}
2744, \, \,   n=3 \\
1600 , \, n=4 \\
3380, \, n=5 \cdot
\end{array}
\right.
\end{array}
\end{equation}
\label{prop:BoundLong}
\end{prop}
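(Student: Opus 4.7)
The plan is to adapt the Bourgain--Tao concentration strategy \cite{bourg,taorad} to the barely supercritical setting, tracking the loglog factor $g$ carefully throughout. I fix a small parameter $\eta = \eta(M)$, to be chosen at the end as a definite negative power of $\gM$, and partition $J$ into consecutive subintervals $J_{1},\dots,J_{N}$ on which
\begin{equation}
\| u \|_{L_{t}^{\bar{Q}} L_{x}^{\bar{R}} (J_{i})} \approx \eta .
\nonumber
\end{equation}
Since $\| u \|_{L_{t}^{\bar{Q}} L_{x}^{\bar{R}}(J)}^{\bar{Q}} \le N \eta^{\bar{Q}}$, the whole problem reduces to estimating $N$.

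On each $J_{i}$ I would combine the Strichartz estimate \eqref{Eqn:Strich} with the nonlinear fractional chain rule of Proposition \ref{Prop:NonlinFracSmooth} applied to $F = g$ and $G(u,\bar{u}) = |u|^{4/(n-2)} u$, using the hypothesis $X(J,u) \le M$ to bound the Strichartz norms of $u$, $Du$ and $D^{k} u$. This forces the linear piece $e^{i(t-t_{i}) \triangle} u(t_{i})$ to already carry a definite fraction of the $L_{t}^{\bar{Q}} L_{x}^{\bar{R}}(J_{i})$ norm. The radial Sobolev embedding for $u(t_{i}) \in \tilde{H}^{k}$, combined with the dispersive inequality \eqref{Eqn:DispIneq}, then localizes the concentration in a ball $B(0,R_{i})$ centered at the origin, with $R_{i}$ of the form $\eta^{a_{n}} \gM^{-c_{n}}$ for explicit exponents $a_{n},c_{n} > 0$: at some $\tau_{i} \in J_{i}$ one obtains
\begin{equation}
Mass \bigl( B(0,R_{i}), u(\tau_{i}) \bigr) \gtrsim \mu ,
\nonumber
\end{equation}
with $\mu$ itself a positive power of $\eta$ and an inverse power of $\gM$.

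The counting step exploits the mass-derivative inequality \eqref{Eqn:UpBdDerivM} together with $\| \nabla u \|_{L_{t}^{\infty} L_{x}^{2}} \lesssim M$: the lower bound on $Mass(B(0,R_{i}),u(t))$ persists on a time interval around $\tau_{i}$ of length $\Delta t_{i} \gtrsim \mu R_{i}/M$. Grouping the concentrations dyadically in $R_{i}$ and comparing against the global mass cap $Mass(B(0,R),u(t)) \lesssim R M$ from \eqref{Eqn:MassControl}, I expect the number of essentially disjoint concentrations at each dyadic scale to be controlled by an inverse power of $\mu$, and summing over scales to yield $N \lesssim \gM^{b_{n}+}$ after optimizing the exponents of $\eta$ and $\mu$. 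Substituting back gives
\begin{equation}
\| u \|_{L_{t}^{\bar{Q}} L_{x}^{\bar{R}}(J)}^{\bar{Q}} \le N \eta^{\bar{Q}} \le C_{1}^{C_{1} \gM^{b_{n}+}} ,
\nonumber
\end{equation}
which is the desired bound.

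The main obstacle is that for $k \le n/2$ the solution $u$ is not pointwise bounded, so one cannot simply replace $g(|u|)$ by $g(\| u \|_{L^{\infty}})$ when applying Proposition \ref{Prop:NonlinFracSmooth} or when localizing the nonlinearity inside the concentration ball. To close the estimates I would invoke the \emph{Jensen-type inequalities} announced in the title, bounding localized averages of $g(|u|)$ by $g$ applied to $\tilde{H}^{k}$-level averages of $|u|$, which by Sobolev are in turn controlled by $M$. These Jensen-type bounds, rather than the geometric concentration machinery, are what ultimately pin down the precise values $b_{n} \in \{2744, 1600, 3380\}$; the partition, concentration extraction and mass-persistence steps follow the template of \cite{bourg,taorad,triroyschrod}.
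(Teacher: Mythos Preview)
Your outline follows the Bourgain--Tao template and correctly identifies that the Jensen-type inequalities (Proposition~\ref{Prop:Jensen}, Corollary~\ref{Cor:Nonlin}) substitute for the $L^{\infty}$ bound when $k \le n/2$. But the counting step has a genuine gap: you never invoke the Morawetz-type estimate (Lemma~\ref{lem:Morawest}), and without it the argument cannot close. Mass concentration plus the persistence bound \eqref{Eqn:UpBdDerivM} and the mass cap \eqref{Eqn:MassControl} do not by themselves control the number $N$ of subintervals, because nothing prevents a single persistent bubble from sitting inside arbitrarily many consecutive $J_{i}$. What the paper actually uses is that Morawetz gives a spacetime bound scaling like $|J|^{1/2}$ rather than $|J|$; combined with the mass concentration at the origin on each unexceptional $J_{l}$ this yields $\sum_{l} |J_{l}|^{1/2} \lesssim g^{C}(M)\,|J|^{1/2}$, forcing at least one subinterval to occupy a fraction $\gtrsim g^{-\gamma_{5}}(M)$ of $|J|$ (Result~\ref{res:DistribInterv}).

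That single-large-interval conclusion is then iterated via Bourgain's recursive algorithm (Step~7): removing the large interval and repeating on each remaining piece produces a chain of $K \gtrsim \log L / \log g(M)$ concentration intervals at geometrically separated scales, all close to a common time $\bar{t}$. The final count is performed at $\bar{t}$ by showing these $K$ balls carry essentially disjoint $L^{1_{2}^{*}}$ mass, giving $K \lesssim g^{\gamma_{7}}(M)$ and hence $L \le C_{7}^{C_{7} g^{\gamma_{7}}(M)}$. In particular the number of subintervals is \emph{exponential} in a power of $g(M)$, not polynomial: your claim that $N \lesssim \gM^{b_{n}+}$ is inconsistent with the stated bound $C_{1}^{C_{1} g^{b_{n}+}(M)}$ and is a symptom of the missing Morawetz/Bourgain machinery. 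Your dyadic-grouping-plus-mass-cap scheme, as written, does not supply the half-power gain that makes this recursion work.
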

The proof of this proposition is given in Section \ref{Sec:Boundlong}. We aim at establishing bounds of norms of the solution that do not depend on time at a higher regularity (i.e $\tilde{H}^{k}$) than the energy (i.e $\dot{H}^{1}$) on an arbitrarily long-time interval. To this end we proceed in two steps (see e.g \cite{triroyschrod,triroywave}). First we establish a Strichartz-type estimate on an arbitrarily long-time interval that depends on an \textit{a priori} bound of these norms: see Section \ref{Sec:Boundlong}. Then we find an \textit{a posteriori} bound by combining this estimate with a local induction on time of the Strichartz estimates: see Section \ref{Sec:Thmmain}.
In the first step, we use the techniques of concentration to establish the Strichartz-type estimate by modifying closely an argument in \cite{taorad}. Roughly speaking, we divide the long-time interval into subintervals where the Strichartz-type
norm is small but not so small. Our goal then boils down to find an explicit bound of the number of subintervals by using local estimates ont these
subintervals and a Morawetz-type inequality. A key element in the process of establishing this bound is to use the slow increase of the
function $g$ by making the estimates involving the expressions where $g$ appears depend on $g$ evaluated at the \textit{a priori} bound, and not only on the
\textit{a priori} bound. The function $g$ appears whenever one has to control the nonlinearity on these subintervals. The nonlinearity is
controlled by using a fractional Leibniz rule and the smallness of the Strichartz-type norm on these subintervals. In \cite{triroyschrod}, we used
extensively the boundedness of the solutions (in other words the Sobolev embedding $\| f \|_{L^{\infty}} \lesssim \| f \|_{\tilde{H}^{k}}$), using to
our advantage $k > \frac{n}{2}$ for $n \in \{3,4\}$, in order to derive estimates that depend on $g$ evaluated at the \textit{a priori} bound. In this paper, in order to deal with nonsmooth solutions, we prove some inequalities  (the so-called Jensen-type inequalities) that are substitutes for the Sobolev
embedding and we implement them in order to prove estimates that satisfy the same property as that stated above.\\
We also use this opportunity to revisit the asymptotic behavior of radial $\tilde{H}^{k}-$ solutions of (\ref{Eqn:BarelySchrod}) for $k > \frac{n}{2}$ and $n \in \{3,4\}$. We prove global well-posedness and scattering  of radial $\tilde{H}^{k}-$ solutions of (\ref{Eqn:BarelySchrod}) for
a larger range of $\gamma$ s than in \cite{triroyschrod} by optimizing the algorithm and the value of the parameters (such as the value of $\bar{Q}$ and $\bar{R}$) in Proposition \ref{prop:BoundLong} and its proof.

\section{Jensen-type inequalities}
\label{Sec:Jensen}

We prove the following Jensen-type inequalities:

\begin{prop}

Let $J$ be an interval. Let $\beta > 0$. Denote by $\mathcal{P}$ the following set

\begin{equation}
\mathcal{P} := \left\{ (x,y):
\begin{array}{l}
\frac{1}{x} + \frac{n}{2y} = \frac{n-2}{4} \\
n=5: \; x \geq 2, \; n=4: \; x > 2, \; n=3: \; x > 4
\end{array}
\right\} \cdot
\nonumber
\end{equation}
Let $(q,r) \in \mathcal{P}$. Let $\frac{1}{\bar{r}} = \frac{n-2\bar{k}}{r(n-2)}$. Let $(q',r')$  be such that

\begin{equation}
\left\{
\begin{array}{l}
\text{if} \; q \neq \infty: \; \left( \frac{1}{q'}, \frac{1}{r'} \right) :=
\left( \frac{n-2\bar{k}}{4} - \frac{n}{2 \bar{r}}, \frac{1}{\bar{r}} + \frac{\bar{k}}{n} \right) \\
\text{if} \; q = \infty: \; \left( \frac{1}{q'}, \frac{1}{r'} \right) :=  \left( 0,\frac{1}{2} \right) \cdot
\end{array}
\right.
\label{Eqn:Cdtionq}
\end{equation}
Assume that there exist $0< P \lesssim 1$ and $Q$ such that
$ \| u \|_{ L_{t}^{q} L_{x}^{r}(J)} \leq P $ and $ \| D^{\bar{k}} u \|_{L_{t}^{q'} L_{x}^{r'}(J)} \leq Q$. Then

\begin{equation}
\begin{array}{ll}
\left\| g^{\beta}(|u|) u \right\|_{L_{t}^{q} L_{x}^{r}(J)} \lesssim P \left(  g^{\beta}(Q) + P^{0-} \right)  \cdot
\end{array}
\label{Eqn:Jensen}
\end{equation}
\label{Prop:Jensen}
\end{prop}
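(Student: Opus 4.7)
The plan is to exploit the very slow growth of $g$ by splitting $u$ at a threshold $A\geq 1$ to be determined, treating the low and high parts separately, and then optimizing the choice of $A$.

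First, I would convert the derivative hypothesis into a pure Lebesgue bound. Since $\frac{1}{r'} - \frac{\bar k}{n} = \frac{1}{\bar r}$ by the definition of $r'$, the Sobolev inequality \eqref{Eqn:SobolevIneq1} applied at each fixed time yields
\[
\| u \|_{L_{t}^{q'} L_{x}^{\bar r}(J)} \lesssim Q.
\]
Moreover, both $(q,r)$ and $(q',\bar r)$ lie on the same $\dot{H}^{s}$-scaling line — one checks $\frac{1}{q}+\frac{n}{2r}=\frac{n-2}{4}$ and $\frac{1}{q'}+\frac{n}{2\bar r}=\frac{n-2\bar k}{4}$ — which guarantees that when we interpolate between the $L_t^q L_x^r$ and $L_t^{q'} L_x^{\bar r}$ controls, the time and space matching conditions are compatible and yield the same interpolation exponent.

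Next I would write $u = u\mathbf{1}_{|u|\leq A} + u\mathbf{1}_{|u|>A}$. On the low region $\{|u|\leq A\}$ the trivial bound $g^\beta(|u|) \leq g^\beta(A)$ immediately gives
\[
\| g^\beta(|u|) u \mathbf{1}_{|u|\leq A} \|_{L_t^q L_x^r(J)} \leq g^\beta(A)\, P.
\]
On the high region $\{|u|>A\}$ I would invoke the slow variation of $g$: for any fixed small $\epsilon > 0$ and $|u|\geq A\geq 1$ one has $g^\beta(|u|) \lesssim_\epsilon g^\beta(A) (|u|/A)^\epsilon$, hence
\[
g^\beta(|u|)|u| \mathbf{1}_{|u|>A} \lesssim_\epsilon g^\beta(A) A^{-\epsilon} |u|^{1+\epsilon}.
\]
Interpolating between the given $L_t^q L_x^r$ bound (norm $P$) and the Sobolev-derived $L_t^{q'} L_x^{\bar r}$ bound (norm $\lesssim Q$) with interpolation parameter $\theta = O(\epsilon)$ produces
\[
\| u \|_{L_t^{q(1+\epsilon)} L_x^{r(1+\epsilon)}(J)} \leq P^{1-\theta} Q^{\theta},
\]
so the high part is bounded by $g^\beta(A) A^{-\epsilon} P^{(1-\theta)(1+\epsilon)} Q^{\theta(1+\epsilon)}$.

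I would then pick $A = \max(Q,1)$. When $Q\geq 1$, choosing $\theta = \epsilon/(1+\epsilon)$ makes the powers of $Q$ cancel and leaves the main term $P\,g^\beta(Q)$. When $Q<1$, taking $A=1$ turns the high-part estimate into something of the shape $P^{1-O(\epsilon)}$, which is precisely the error term $P\cdot P^{-}$ appearing in \eqref{Eqn:Jensen}. The case $q=\infty$, $q'=\infty$ is handled identically, the interpolation collapsing to a purely spatial one at each fixed time. The main obstacle I expect is bookkeeping rather than any single deep step: one must verify carefully that the spatial and temporal interpolation conditions yield the \emph{same} $\theta$ — this is the only place where the precise form of $(q',r')$ in \eqref{Eqn:Cdtionq} is used — and that after balancing $A$, the only factor of $Q$ remaining is packaged inside $g^\beta(Q)$ with no extra polynomial dependence on $Q$. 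That sharpness is what makes \eqref{Eqn:Jensen} useful in later sections, where $Q$ is typically much larger than $P$.
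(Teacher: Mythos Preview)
Your approach differs from the paper's, and the balancing step has a genuine gap. The interpolation parameter $\theta$ between $L_t^q L_x^r$ and $L_t^{q'} L_x^{\bar r}$ is not free once you have committed to landing in $L_t^{q(1+\epsilon)} L_x^{r(1+\epsilon)}$: the spatial condition $\tfrac{1}{r(1+\epsilon)} = \tfrac{1-\theta}{r} + \tfrac{\theta}{\bar r}$ forces $\theta = \tfrac{\epsilon}{1+\epsilon}\cdot\tfrac{n-2}{2(\bar k-1)}$ (and the temporal condition does give the same value, so your compatibility check is fine). Hence the power of $Q$ on the high part is $\theta(1+\epsilon) = \epsilon\cdot\tfrac{n-2}{2(\bar k-1)}$, which strictly exceeds the power $\epsilon$ of $A^{-1}$ because $\bar k < \tfrac{n}{2}$. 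With $A=Q$ the $Q$'s therefore do \emph{not} cancel. You can repair this by taking $A = Q^{(n-2)/(2(\bar k-1))}$ instead; the loglog growth of $g$ still gives $g^\beta(A)\lesssim g^\beta(Q)$, and the high part becomes $g^\beta(Q)\,P^{1-(c-1)\epsilon}$ with $c=\tfrac{n-2}{2(\bar k-1)}>1$. The resulting bound is $g^\beta(Q)\,P^{1-}$, which is slightly weaker than the stated $P\,(g^\beta(Q)+P^{-})$ (you lose an extra $P^{-}$ multiplying the $g^\beta(Q)$ term), though it does suffice for Proposition~\ref{Prop:FracLeibn} and Corollary~\ref{Cor:Nonlin}, where only $P^{\frac{4}{n-2}-}g(Q)$ is needed.

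The paper avoids this loss by a genuinely different mechanism, which is why the result is called a \emph{Jensen-type} inequality. Rather than the crude pointwise bound $g^\beta(|u|)\lesssim g^\beta(A)(|u|/A)^\epsilon$, the paper fixes $A\approx 1$, observes that $g^{\beta r}$ is concave on $\{|x|\geq A\}$ with $g^{\beta r}(|x|^\epsilon)\gtrsim g^{\beta r}(|x|)$ there, and applies Jensen's inequality with the probability measure $|u|^r\,dx\big/\!\int|u|^r\,dx$ (and once more in $t$ when $q<\infty$). This produces a bound of the form $\|u\|_{L^q_tL^r_x}^q\, g^{\beta q}\bigl(Q^{C}\big/\|u\|_{L_t^qL_x^r}^{C'}\bigr)$, so $Q$ stays inside $g$ throughout and no polynomial factor of $Q$ ever escapes; the sharper form $P\,(g^\beta(Q)+P^{-})$ then follows from the sublinearity of $g$ under products.
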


\begin{proof}
Let $ 1 \gg \epsilon > 0$ be a fixed constant. Elementary considerations show that there exists $A \approx 1 $ such that if $|x| > A$
then $g^{\beta m}$ is concave with $m \in \{ q,r \}$ and $g^{\beta r}(|x|^{\epsilon}) \geq \frac{1}{10} g^{\beta r}(|x|)$. \\
\\
\begin{itemize}

\item \underline{Case $1$}: $q = \infty$  $($ Hence $r =1_{2}^{*} = \frac{2n}{n-2}$ $)$ \\
\\
Let $ \bar{k}_{2}^{*} -r \gg \epsilon > 0$ be a fixed constant. One has to estimate for all $t \in J$

\begin{equation}
\begin{array}{l}
X_{1} := \int_{|u(t,x)| \leq A} g^{\beta r}(|u(t,x)|) |u(t,x)|^{r} \; dx,\; \text{and} \\
X_{2} := \int_{|u(t,x)| > A} g^{\beta r}(|u(t,x)|) |u(t,x)|^{r} \; dx.
\end{array}
\nonumber
\end{equation}
Clearly $ |X_{1}|  \lesssim P^{r} $. Observe also from (\ref{Eqn:SobolevIneq1}) that

\begin{equation}
\begin{array}{ll}
\| u(t) \|_{L^{r + \epsilon}} & \lesssim \| u(t) \|^{1-\theta}_{L^{\bar{k}_{2}^{*}}} \| u(t) \|^{\theta}_{L^{r}} \\
& \lesssim Q^{1- \theta} \| u(t) \|_{L^{r}}^{\theta},
\end{array}
\nonumber
\end{equation}
with $\theta:= \frac{\frac{1}{_{r+ \epsilon}} - \frac{1}{_{\bar{k}_{2}^{*}}}}{\frac{1}{_{r}} - \frac{1}{_{\bar{k}_{2}^{*}}}} =
\frac{ \left( \bar{k}_{2}^{*} - (r + \epsilon) \right) r }{(r+ \epsilon)\left( \bar{k}_{2}^{*}-r \right)}$. We get from the Jensen inequality
$($ \hspace{0.1cm} more precisely divide into two regions $ R_{1}:= \{ x: \;  |u(t,x)|^{\epsilon} \leq A \}$ and
$R_{2} :=  \{  x: \;  |u(t,x)|^{\epsilon} > A \} $; on $R_{1}$ apply elementary estimates and on
$R_{2}$  use $ \mathbbm{1}_{R_{2}}(x) g^{\beta r}\left( |u(t,x)|^{\epsilon} \right) \lesssim
 g^{\beta r} \left( \mathbbm{1}_{R_{2}}(x)  |u(t,x)|^{\epsilon} \right) $ and apply the Jensen inequality  for $t$ such that $ \| u(t) \|^{r}_{L^{r}} \neq 0 $ with $\mu$ measure defined by $ d \mu :=  |u(t)|^{r} \; dx $ $)$

\begin{equation}
\begin{array}{ll}
|X_{2}| & \lesssim \int g^{\beta r} \left( |u(t,x)|^{\epsilon} \right) |u(t,x)|^{r} \; dx  \\
& \lesssim \int |u(t,x)|^{r} \; dx \;  g^{\beta r} \left(  \frac{ \int |u(t,x)|^{r + \epsilon} \; dx}{\int |u(t,x)|^{r} \; dx} \right) \\
& \lesssim  \int |u(t,x)|^{r} \; dx \; g^{\beta r} \left(
\frac{ Q^{(r+\epsilon)(1- \theta)} }{ \| u(t) \|_{L^{r}}^{r- \theta (r+ \epsilon)}} \right)
\end{array}
\nonumber
\end{equation}
Elementary estimates show that

\begin{equation}
\begin{array}{ll}
|X_{2}| &  \lesssim \| u(t) \|_{L^{r}}^{r} \left(  g^{\beta r}(Q) + g^{\beta r} \left( \frac{1}{\| u(t) \|_{L^{r}}} \right) \right)
\lesssim  P^{r}\left(  g^{\beta r}(Q) + P^{0-} \right)
\end{array}
\nonumber
\end{equation}
Hence (\ref{Eqn:Jensen}) holds. \\
\\
\item \underline{Case $2$}: $q < \infty$ \\
\\
Let $ \bar{k} - 1 \gg \epsilon > 0$ be a fixed constant. One has to estimate

\begin{equation}
\begin{array}{l}
 X_{1} := \int_{J} \left( \int_{|u(t,x)| \leq A} g^{\beta r}(|u(t,x)|) |u(t,x)|^{r} \; dx \right)^{\frac{q}{r}}
 \; dt, \; \text{and} \\
X_{2} := \int_{J} \left( \int_{|u(t,x)| \geq A} g^{\beta r}(|u(t,x)|) |u(t,x)|^{r} \; dx \right)^{\frac{q}{r}} \; dt.
\end{array}
\nonumber
\end{equation}
Clearly $ |X_1| \lesssim  P^{q}$. From (\ref{Eqn:SobolevIneq1}) we get

\begin{equation}
\begin{array}{ll}
\| u \|_{L_{t}^{\frac{q(r+ \epsilon)}{r}} L_{x}^{r+ \epsilon}(J)} & \lesssim  \| u \|^{1 - \theta}_{L_{t}^{q} L_{x}^{r}(J)}
\| u \|^{\theta}_{L_{t}^{q'} L_{x}^{\bar{r}}(J)} \\
& \lesssim \| u \|^{1 - \theta}_{L_{t}^{q} L_{x}^{r}(J)} \| D^{\bar{k}} u \|^{\theta}_{ L_{t}^{q'} L_{x}^{r'} (J)} \\
& \lesssim Q^{\theta} \| u \|^{1- \theta}_{L_{t}^{q} L_{x}^{r}(J)},
\end{array}
\nonumber
\end{equation}
with $ \theta := \frac{\frac{1}{r} - \frac{1}{r+ \epsilon}}{\frac{1}{r} - \frac{1}{\bar{r}}}
=  \frac{\epsilon (n-2)}{_{2(r+ \epsilon)(\bar{k}-1)}} $. Applying twice the Jensen inequality

\begin{equation}
\begin{array}{ll}
|X_{2}| & \lesssim \int_{J} \left( \int  g^{\beta r}(|u(t,x)|^{\epsilon}) |u(t,x)|^{r} \; dx \right)^{\frac{q}{r}} \; dt  \\
& \lesssim
\int_{J} \left( \int |u(t,x)|^{r} \; dx \;  g^{\beta r} \left(  \frac{ \int |u(t,x)|^{r + \epsilon} \; dx}
{\int |u(t,x)|^{r} \; dx} \right) \right)^{\frac{q}{r}} \; dt \\
& \lesssim \int_{J} \left( \int |u(t,x)|^{r} \; dx  \right)^{\frac{q}{r}} \;  g^{\beta q}
\left( \left(  \frac{ \int |u(t,x)|^{r + \epsilon} \; dx}
{\int |u(t,x)|^{r} \; dx} \right)^{\frac{q}{r}} \right) \; dt \\
& \lesssim \| u \|_{L_{t}^{q} L_{x}^{r}(J)}^{q} \;
g^{\beta q} \left(  \frac{  \| u \|^{\frac{q(r+\epsilon)}{r}}_{L_{t}^{\frac{q(r+\epsilon)}{r}} L_{x}^{r+ \epsilon}(J)}}
{\| u \|^{q}_{L_{t}^{q} L_{x}^{r}(J)}} \right) \\
& \lesssim \| u \|^{q}_{L_{t}^{q} L_{x}^{r}(J)} g^{\beta q} \left(
\frac{ Q^{\frac{\theta q (r + \epsilon)}{r}}}
{\| u \|^{q \left( 1 - \frac{(1- \theta)(r + \epsilon)}{r} \right)}_{L_{t}^{q} L_{x}^{r}(J)}} \right)
\end{array}
\nonumber
\end{equation}
Elementary estimates show that

\begin{equation}
\begin{array}{l}
|X_2| \lesssim \| u \|^{q}_{L_{t}^{q} L_{x}^{r}(J)}
\left( g^{\beta q} (Q) +  g^{\beta q} \left( \frac{1}{\| u \|_{L_{t}^{q} L_{x}^{r}(J)}} \right)  \right)
\lesssim P^{q} \left( g^{\beta q} (Q) + P^{0-} \right) \cdot
\end{array}
\nonumber
\end{equation}
Hence (\ref{Eqn:Jensen}) holds.

\end{itemize}
\end{proof}

\section{Consequences}

In this section, we implement the Jensen-type inequalities to prove some results.

\subsection{Fractional Leibniz rule}

We prove the following fractional Leibnitz rule:

\begin{prop}
Let $J$ be an interval. Let $ 0  \leq \alpha \leq 1$, $(q,r) \in \mathcal{P}$, and $(\breve{q},\tilde{q},\breve{r},\tilde{r})$ be such that $\left( \frac{1}{\breve{q}}, \frac{1}{\breve{r}} \right) = \frac{4}{n-2} \left( \frac{1}{q}, \frac{1}{r} \right)
+ \left( \frac{1}{\tilde{q}}, \frac{1}{\tilde{r}} \right)$. Let
$G:=\mathbb{R}^{2} \rightarrow \mathbb{R}^{2}$ be a $C^{2}$- function such that

\begin{equation}
G^{[i]}(x,\bar{x})  = O (|x|^{\frac{4}{n-2}+ 1 -i})
\label{Eqn:CdtionG}
\end{equation}
for $ 0 \leq i \leq 2$. Here $G^{[i]}$ denotes the $i^{th}-$ derivative of $G$. Let $(q',r') $ be a bipoint that satisfies (\ref{Eqn:Cdtionq}).
Assume that there exist $ P \lesssim 1 $ and $Q$ such that $\| u \|_{L_{t}^{q} L_{x}^{r}(J)} \leq P$
and $\| D^{\bar{k}} u \|_{L_{t}^{q'} L_{x}^{r'}(J)} \leq Q$. Then

\begin{equation}
\begin{array}{ll}
\left\| D^{1 + \alpha} ( G(u,\bar{u}) g(|u|) \right\|_{L_{t}^{\breve{q}} L_{x}^{\breve{r}}(J)} & \lesssim
P^{\frac{4}{n-2}-} g(Q)  \| D^{1 + \alpha} u \|_{L_{t}^{\tilde{q}} L_{x}^{\tilde{r}}(J)}
\end{array}
\label{Eqn:EstToProveFrac}
\end{equation}
\label{Prop:FracLeibn}
\end{prop}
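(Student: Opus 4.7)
The plan is to combine a fractional Leibnitz rule in the spirit of Proposition~\ref{Prop:NonlinFracSmooth} with the Jensen-type bound of Proposition~\ref{Prop:Jensen}. The first tool distributes $D^{1+\alpha}$ across $G(u,\bar u)\,g(|u|)$, producing a product of Lebesgue norms of $u$ (and one norm of its derivative); the second tool is then used to replace a factor of $\|u\,g(|u|)\|_{L^{q}_{t}L^{r}_{x}}$ by $P\,g(Q)$, up to an inconsequential $P^{-}$ loss.

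Concretely, I would first apply Proposition~\ref{Prop:NonlinFracSmooth} with $k'=2$, $\beta=\frac{4}{n-2}$, and $F(x):=g(\sqrt{x})$. Verifying~(\ref{Eqn:Cdtionf}) for this $F$ is a short computation: the derivative bound follows from $g'(x)\lesssim g(x)/\bigl(x\log(10+x^{2})\log\log(10+x^{2})\bigr)$, and the sub-additivity condition is inherited from the monotonicity and slow growth of iterated logarithms together with $|\tau x+(1-\tau)y|^{2}\lesssim |x|^{2}+|y|^{2}$. Distributing the resulting H\"older splitting so that every factor of $u$ ends up in $L^{q}_{t}L^{r}_{x}$, and isolating one copy of $u$ together with $g(|u|)$, would give
\[
\|D^{1+\alpha}(G(u,\bar u)\,g(|u|))\|_{L^{\breve q}_{t}L^{\breve r}_{x}(J)}
\lesssim \|u\|^{\frac{4}{n-2}-1}_{L^{q}_{t}L^{r}_{x}(J)}\;
\|u\,g(|u|)\|_{L^{q}_{t}L^{r}_{x}(J)}\;
\|D^{1+\alpha}u\|_{L^{\tilde q}_{t}L^{\tilde r}_{x}(J)},
\]
the exponent relation being consistent since $(\frac{4}{n-2}-1)(1/q,1/r)+(1/q,1/r)+(1/\tilde q,1/\tilde r)=\frac{4}{n-2}(1/q,1/r)+(1/\tilde q,1/\tilde r)=(1/\breve q,1/\breve r)$ as required by the hypotheses of the proposition.

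Second, I would apply Proposition~\ref{Prop:Jensen} with $\beta=1$ to bound $\|u\,g(|u|)\|_{L^{q}_{t}L^{r}_{x}(J)}\lesssim P\bigl(g(Q)+P^{-}\bigr)$, and use the hypothesis $\|u\|_{L^{q}_{t}L^{r}_{x}(J)}\leq P$ to bound the remaining factor by $P^{\frac{4}{n-2}-1}$. Multiplying produces a leading term $P^{\frac{4}{n-2}}g(Q)\|D^{1+\alpha}u\|$; the secondary $P\cdot P^{-}$ contribution is strictly smaller by a factor of $P^{-}$ than the leading piece and is absorbed into the exponent $\frac{4}{n-2}-$, which yields the advertised inequality.

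The principal obstacle is that Proposition~\ref{Prop:NonlinFracSmooth} requires $\beta$ to be an integer. This is fine for $n\in\{3,4\}$ (where $\beta\in\{4,2\}$), but fails for $n=5$, where $\beta=\frac{4}{3}$. For $n=5$ I would set the integer $\beta=1$ in the application of Proposition~\ref{Prop:NonlinFracSmooth} and absorb the excess factor $|u|^{1/3}$ into $F$, taking $F(x):=x^{1/6}\,g(\sqrt{x})$; this $F$ is still slowly varying and satisfies~(\ref{Eqn:Cdtionf}) up to constants, so the same argument closes. A secondary, purely technical point is the H\"older bookkeeping needed to ensure every $u$-factor lands exactly on $(q,r)\in\mathcal{P}$ and that $(\tilde q,\tilde r)$ is compatible with condition~(\ref{Eqn:Cdtionq}) on $(q',r')$ used to invoke Proposition~\ref{Prop:Jensen}; this is routine but must be done with care to preserve the final exponent of $P$.
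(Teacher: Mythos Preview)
Your approach has a genuine gap at the step ``isolating one copy of $u$ together with $g(|u|)$.'' Proposition~\ref{Prop:NonlinFracSmooth} outputs a product of the form $\|u\|_{L^{r_1}}^{\beta}\,\|D^{1+\alpha}u\|_{L^{r_2}}\,\|F(|u|^{2})\|_{L^{r_3}}$ in which the $F$-factor is \emph{separated} from the $\beta$ copies of $u$ (all of which sit at the same exponent $r_1$). If you place all $\beta=\frac{4}{n-2}$ copies of $u$ on $(q,r)$ and $D^{1+\alpha}u$ on $(\tilde q,\tilde r)$, the H\"older balance together with the hypothesis $\frac{1}{\breve r}=\frac{4}{n-2}\frac{1}{r}+\frac{1}{\tilde r}$ forces $r_3=\infty$; but $\|g(|u|)\|_{L^{\infty}}$ is uncontrolled precisely for the unbounded solutions ($k\leq n/2$) this proposition is meant to handle. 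Taking $r_3<\infty$ is no better since $g\geq g(0)>0$ on all of $\mathbb{R}^{n}$. There is no legitimate way to peel off one $u$ from $\|u\|^{\beta}_{L^{r_1}}$ and attach it to $\|g(|u|)\|_{L^{r_3}}$ after the fact. For $n=4$ the constraint $\beta\in\mathbb{N}$, $\beta>k'-1=1$ pins $\beta=2$, leaving no room to absorb any power of $|u|$ into $F$; and your $n=5$ fix sets $\beta=1$, which violates the strict inequality $\beta>1$ required by Proposition~\ref{Prop:NonlinFracSmooth}.

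The paper avoids this by \emph{not} invoking Proposition~\ref{Prop:NonlinFracSmooth} as a black box. It writes $D^{1+\alpha}\approx D^{\alpha}\nabla$, expands the gradient by the chain rule into terms such as $\partial_{z}G(u,\bar u)\,\nabla u\,\tilde g(|u|^{2})$ and $\tilde g'(|u|^{2})\Re(\bar u\nabla u)G(u,\bar u)$, and then applies the fractional product rule~(\ref{Eqn:FracProd}) and composition rule~(\ref{Eqn:DerivComp}) to each piece. The point is that the resulting coefficients $\partial_z G\cdot\tilde g$ and $(\partial_z G\cdot\tilde g)'$ satisfy pointwise bounds $O(|u|^{4/(n-2)}g(|u|))$ and $O(|u|^{(6-n)/(n-2)}g(|u|))$, which one rewrites as $\bigl|u\,g^{(n-2)/4}(|u|)\bigr|^{4/(n-2)}$ and $\bigl|u\,g^{(n-2)/(6-n)}(|u|)\bigr|^{(6-n)/(n-2)}$ before taking norms. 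These are exactly the quantities Proposition~\ref{Prop:Jensen} controls, yielding factors $P^{\frac{4}{n-2}-}g(Q)$ and $P^{\frac{6-n}{n-2}-}g(Q)$. A complex-interpolation step then collapses the intermediate norms $\|D^{\alpha}u\|$ and $\|Du\|$ back into $\|u\|_{L^{q}_t L^{r}_x}$ and $\|D^{1+\alpha}u\|_{L^{\tilde q}_t L^{\tilde r}_x}$. In short, the paper never lets $g$ stand alone: it keeps $g$ multiplied by a positive power of $|u|$ at every stage, which is what makes the Jensen-type inequality applicable.
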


\begin{proof}
The proof combines Jensen-type inequalities with similar arguments that are in the proof of the fractional Leibnitz rule
established in \cite{triroyschrod}. \\
Recall the usual product rule for fractional derivatives

\begin{equation}
\begin{array}{ll}
\| D^{\alpha_{1}} (fg) \|_{L^{q}} & \lesssim \| D^{\alpha_{1}} f \|_{L^{q_{1}}} \| g \|_{L^{q_{2}}} + \| f \|_{L^{q_{3}}} \| D^{\alpha_{1}} g
\|_{L^{q_{4}}}
\end{array}
\label{Eqn:FracProd}
\end{equation}
and the usual Leibnitz rule for fractional derivatives :

\begin{equation}
\begin{array}{ll}
\| D^{\alpha_{2}} H(f) \|_{L^{q}} & \lesssim \| \tilde{H}(f) \|_{L^{q_{1}}}  \| D^{\alpha_{2}} f \|_{L^{q_{2}}}
\end{array}
\label{Eqn:DerivComp}
\end{equation}
if $H$ is $C^{1}$ and it satisfies $\left| H^{'}\left( \tau x + (1- \tau) y \right) \right| \lesssim
\tilde{H}(x) + \tilde{H}(y)$ for $\tau \in [0,1]$, $ 0 \leq \alpha_{1} < \infty$, $ 0 <  \alpha_{2}  \leq 1 $, $(q,q_{1},q_{2},q_{3},q_4) \in (1, \infty)^{5}$, $\frac{1}{q}= \frac{1}{q_{1}} + \frac{1}{q_{2}}$, and $\frac{1}{q}= \frac{1}{q_3} + \frac{1}{q_4}$ ( see e.g Christ-Weinstein \cite{christwein}, Taylor
\cite{taylor} and references in \cite{taylor}) $($ Abuse of notation: $\tilde{H}(x)$, $H(x)$, and $H^{'}(x)$ mean
$\tilde{H}(x,\bar{x})$, $H(x,\bar{x})$, and $H^{'}(x,\bar{x})$ respectively \label{foot:notab} $)$. Let $\tilde{g}(v) := \log^{\gamma} \log(10 + v)$ for $v \in \mathbb{R}^{+}$. We have

\begin{equation}
\begin{array}{ll}
\left\| D^{1 +\alpha} ( G(u,\bar{u}) g(|u|) ) \right\|_{L_{t}^{\breve{q}} L_{x}^{\breve{r}}(J)} \\
\approx \left\| D^{\alpha } \nabla ( G(u,\bar{u}) \tilde{g}(|u|^{2})  ) \right\|_{L_{t}^{\breve{q}} L_{x}^{\breve{r}}(J)} \\
\lesssim \left\| D^{\alpha}( \partial_{z} G(u,\bar{u}) \nabla u  \tilde{g}(|u|^{2}) )  \right\|_{L_{t}^{\breve{q}} L_{x}^{\breve{r}}(J)} +
\left\| D^{\alpha} ( \partial_{\bar{z}} G(u,\bar{u}) \overline{ \nabla u}  \tilde{g}(|u|^{2}) ) \right\|_{L_{t}^{\breve{q}} L_{x}^{\breve{r}}(J)} \\
+ \left\| D^{\alpha} \left( \tilde{g}^{'}(|u|^{2})   \Re \left( \bar{u}  \nabla u \right)  G(u,\bar{u})  \right) \right\|_{L_{t}^{\breve{q}}
L_{x}^{\breve{r}} (J)}  \\
\lesssim  A_{1} + A_{2} + A_{3}
\end{array}
\end{equation}
We estimate $A_{1}$. $A_{2}$ is estimated in a similar fashion. Let
$\left( \frac{1}{q_2}, \frac{1}{r_2} \right) = (1-\theta) \left( \frac{1}{q},\frac{1}{r} \right) + \theta
\left( \frac{1}{_{\tilde{q}}}, \frac{1}{_{\tilde{r}}} \right)$ with $\theta= \frac{1}{1+ \alpha}$. Let $(q_1,r_1)$ be such that
$\left( \frac{1}{_{\breve{q}}}, \frac{1}{_{\breve{r}}} \right)= \left( \frac{1}{q_1} + \frac{1}{q_2}, \frac{1}{r_1} + \frac{1}{r_2} \right)$. We
can estimate $A_1$ using (\ref{Eqn:FracProd}) and (\ref{Eqn:DerivComp}). More
precisely

\begin{equation}
\begin{array}{ll}
A_{1} & \lesssim \| D^{\alpha} ( \partial_{z} G(u,\bar{u})  \tilde{g}(|u|^{2}) )  \|_{L_{t}^{q_{1}} L_{x}^{r_{1}}(J)}
\| D u \|_{L_{t}^{q_2} L_{x}^{r_{2}}(J)} +
\| \partial_{z} G(u,\bar{u}) \tilde{g}(|u|^{2}) \|_{L_{t}^{\frac{n-2}{4}q} L_{x}^{\frac{n-2}{4}r}(J)} \\
& \| D^{ 1 + \alpha} u \|_{L_{t}^{\tilde{q}} L_{x}^{\tilde{r}}(J)}
\end{array}
\label{Eqn:EstA1}
\end{equation}
Next we implement the Jensen-type inequalities: see Section \ref{Sec:Jensen}. Let $(q_3,r_3)$ be such that
$\left( \frac{6-n}{(n-2)q},\frac{6-n}{(n-2)r} \right) +  \left( \frac{1}{q_3}, \frac{1}{r_3} \right) = \left( \frac{1}{q_1}, \frac{1}{r_1} \right)$.
From

\begin{equation}
\begin{array}{l}
\left\|  \left( \partial_{z} G(z,\bar{z}) \tilde{g}(|z|^{2}) \right)^{'}(u,\bar{u}) \right\|_{L_{t}^{\frac{n-2}{6-n}q}
L_{x}^{\frac{n-2}{6-n}r}(J)}  \lesssim
\| u  \tilde{g}^{\frac{n-2}{6-n}}(|u|^{2}) \|^{\frac{6-n}{n-2}}_{L_{t}^{q} L_{x}^{r}(J)} \lesssim P^{\frac{6-n}{n-2}-}g(Q), \; \text{and}
\end{array}
\nonumber
\end{equation}

\begin{equation}
\begin{array}{ll}
\| \partial_{z} G(u,\bar{u}) \tilde{g}(|u|^{2}) \|_{L_{t}^{\frac{n-2}{4}q} L_{x}^{\frac{n-2}{4}r}(J)} \lesssim
\| u \tilde{g}^{\frac{n-2}{4}}(|u|^{2}) \|^{\frac{4}{n-2}}_{L_{t}^{q} L_{x}^{r} (J)} \lesssim
P^{\frac{4}{n-2}-} g(Q),
\end{array}
\nonumber
\end{equation}
we get

\begin{equation}
\begin{array}{ll}
A_{1} & \lesssim P^{\frac{6-n}{n-2}-} g(Q)
\| D^{\alpha} u \|_{L_{t}^{q_3} L_{x}^{r_3}(J)} \| D u  \|_{L_{t}^{q_2} L_{x}^{r_2}(J)} \\
& +  P^{\frac{4}{n-2}-} g(Q)   \| D^{1+\alpha} u \|_{L_{t}^{\tilde{q}} L_{x}^{\tilde{r}}(J)} \cdot
\end{array}
\label{Eqn:EstFrac1}
\end{equation}
Notice that $\frac{1}{q_3} = \frac{\theta}{q} + \frac{1- \theta}{\tilde{q}}$ and
$\frac{1}{r_3} = \frac{\theta}{r} + \frac{1- \theta}{\tilde{r}}$. By complex interpolation, we have

\begin{equation}
\begin{array}{ll}
\| D^{\alpha} u \|_{L_{t}^{q_{3}} L_{x}^{r_{3}}(J)} & \lesssim \| u \|^{\theta}_{L_{t}^{q} L_{x}^{r}(J)}
\| D^{ 1 + \alpha}  u \|^{1- \theta}_{L_{t}^{\tilde{q}} L_{x}^{\tilde{r}}(J)}
\end{array}
\label{Eqn:Interp11}
\end{equation}
and

\begin{equation}
\begin{array}{ll}
 \| D u  \|_{ L_{t}^{q_{2}} L_{x}^{r_{2}}(J)} & \lesssim \| u \|^{1- \theta}_{L_{t}^{q} L_{x}^{r}(J)}
 \| D^{ 1 + \alpha} u  \|^{\theta}_{L_{t}^{\tilde{q}} L_{x}^{\tilde{r}}(J)}
\end{array}
\label{Eqn:Interp12}
\end{equation}
Plugging (\ref{Eqn:Interp11}) and (\ref{Eqn:Interp12}) into (\ref{Eqn:EstFrac1}) we get (\ref{Eqn:EstToProveFrac}).

We estimate $A_{3}$.

\begin{equation}
\begin{array}{ll}
A_{3} & \lesssim \sum \limits_{\tilde{u} \in \{ u,\bar{u} \}}
\left[
\begin{array}{l}
\left\| D^{\alpha} \left( \tilde{g}^{'}(|u|^{2})  \tilde{u} G(u,\bar{u}) \right) \right\|_{L_{t}^{q_1} L_{x}^{r_1}(J)}
\| D u \|_{L_{t}^{q_2} L_{x}^{r_2}(J)} \\
+ \| D^{1+ \alpha} u \|_{L_{t}^{\tilde{q}} L_{x}^{\tilde{r}}(J)}
\left\| \tilde{g}^{'}(|u|^{2}) \tilde{u}  G(u,\bar{u}) \right\|_{L_{t}^{\frac{(n-2)q}{4}} L_{x}^{\frac{(n-2)r}{4}}(J)}
\end{array}
\right] \\
& \lesssim A_{3,1} + A_{3,2}
\end{array}
\end{equation}
Hence from elementary pointwise estimates of $\tilde{g}$ (and its derivatives) we see that $A_{3,1}$ (resp. $A_{3,2}$) can be
estimated similarly to the first term (resp. the second term) of the right-hand side of (\ref{Eqn:EstA1}).

\end{proof}

\subsection{Corollary}

We prove the following corollary:

\begin{cor}

Let $\breve{q}$, $\breve{r}$, $q$, $r$, $\tilde{q}$,  $\tilde{r}$, $\Omega$  be such that \\
$\left( \breve{q},\breve{r}, q, r, \tilde{q},\tilde{r}, \Omega \right) :=
\left( \breve{Q},\breve{R}, \bar{Q}, \bar{R}, \infty-, 2+, X \right) $ or \\
$\left( \breve{q},\breve{r}, q, r, \tilde{q},\tilde{r}, \Omega \right) := \left( \frac{2(n+2)}{n+4}(1,1), \frac{2(n+2)}{n-2}(1,1),
\frac{2(n+2)}{n} (1,1),Y \right) $. \\
Let $J$ be an interval such that $\| u \|_{L_{t}^{q} L_{x}^{r}(J)} \leq P \lesssim 1$.
Let $j \in \{1,k \}$. Assume that $1 <  k \leq 2$. Then

\begin{equation}
\begin{array}{ll}
\left\| D^{j} \left( |u|^{\frac{4}{n-2}} u g(|u|) \right) \right\|_{L_{t}^{\breve{q}}  L_{x}^{\breve{r}} (J)}
& \lesssim P^{\frac{4}{n-2}-} g \left( \Omega(J,u) \right) \| D^{j} u \|_{L_{t}^{\tilde{q}} L_{x}^{\tilde{r}} (J)}  \cdot
\end{array}
\label{Eqn:EstDj}
\end{equation}

\label{Cor:Nonlin}
\end{cor}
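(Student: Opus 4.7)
The plan is to deduce Corollary \ref{Cor:Nonlin} as a direct application of the fractional Leibnitz rule of Proposition \ref{Prop:FracLeibn} to $G(z,\bar z):=|z|^{4/(n-2)}z$ with parameter $\alpha:=j-1\in[0,1]$ (so $\alpha=0$ or $\alpha=k-1$). Under this choice, $G(u,\bar u)g(|u|)=|u|^{4/(n-2)}u\,g(|u|)$ and $D^{1+\alpha}=D^{j}$, and $G$ trivially satisfies (\ref{Eqn:CdtionG}) with exponent $\frac{4}{n-2}$. Estimate (\ref{Eqn:EstToProveFrac}) then becomes precisely (\ref{Eqn:EstDj}), provided one takes $Q:=\Omega(J,u)$ and verifies the remaining hypotheses of Proposition \ref{Prop:FracLeibn}.

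The first verification is that each of the two tuples $(\breve q,\breve r,q,r,\tilde q,\tilde r)$ listed in the statement satisfies the scaling relations demanded by Proposition \ref{Prop:FracLeibn}, namely $(q,r)\in\mathcal{P}$ and $(1/\breve q,1/\breve r)=\frac{4}{n-2}(1/q,1/r)+(1/\tilde q,1/\tilde r)$. For the first tuple this is a case-by-case check in $n\in\{3,4,5\}$ from the explicit definitions of $(\bar Q,\bar R)$, $(\breve Q,\breve R)$, and $(\tilde q,\tilde r)=(\infty-,2+)$; for the second tuple it boils down to the single arithmetic identity $\frac{4}{n-2}\cdot\frac{n-2}{2(n+2)}+\frac{n}{2(n+2)}=\frac{n+4}{2(n+2)}$ together with $\frac{1}{q}+\frac{n}{2r}=\frac{n-2}{4}$, both immediate.

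The main step, and the one I expect to require the most care, is to produce the hypothesis $\|D^{\bar k}u\|_{L_{t}^{q'}L_{x}^{r'}(J)}\lesssim\Omega(J,u)$ demanded by Proposition \ref{Prop:FracLeibn}, where $(q',r')$ is the pair defined in (\ref{Eqn:Cdtionq}). A direct calculation from (\ref{Eqn:Cdtionq}) shows that $(q',r')$ is Strichartz-admissible in every case, i.e.\ $2/q'+n/r'=n/2$. The key observation is then that $\Omega(J,u)$ dominates $\|D^{\bar k}u\|$ at two admissible endpoints: at $(\infty,2)$ by combining $\|u\|_{L_{t}^{\infty}\tilde{H}^{k}}\leq\Omega$ with Gagliardo--Nirenberg interpolation in the derivative index at exponent $\theta=(\bar k-1)/(k-1)\in(0,1)$; and at $(2,1_{2}^{*})$ when $\Omega=X$, or at $(2(n+2)/n,2(n+2)/n)$ when $\Omega=Y$, by applying the same interpolation to the $D^{1}$ and $D^{k}$ terms in the definition of $\Omega$. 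A short computation, reducing to $\bar k>1$, confirms that $(q',r')$ lies on the admissible line segment joining these two endpoints, so complex interpolation in both time and space variables yields the bound $\|D^{\bar k}u\|_{L_{t}^{q'}L_{x}^{r'}(J)}\lesssim\Omega(J,u)$. Proposition \ref{Prop:FracLeibn} with $P\lesssim 1$ and $Q:=\Omega(J,u)$ then delivers exactly (\ref{Eqn:EstDj}).
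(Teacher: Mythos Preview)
Your proposal is correct and follows essentially the same route as the paper: apply Proposition \ref{Prop:FracLeibn} with $G(z,\bar z)=|z|^{4/(n-2)}z$ and $\alpha=j-1$, and supply the missing hypothesis $\|D^{\bar k}u\|_{L_{t}^{q'}L_{x}^{r'}(J)}\lesssim\Omega(J,u)$ by interpolating first in the spacetime exponents between $(\infty,2)$ and $(q'',r''):=(2,1_{2}^{*})$ or $\tfrac{2(n+2)}{n}(1,1)$, then in the derivative index between $D^{1}$ and $D^{k}$. The paper's proof is terser but records exactly this chain of interpolations in (\ref{Eqn:Interpkbar}).
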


\begin{proof}
Let $\theta \in [0,1]$ be a constant that is allowed to change from one line to the other one and such that all the
estimate  below are true. \\
We apply Proposition \ref{Prop:FracLeibn}. \\
Let $(q^{''},r^{''})$  be defined as follows:

\begin{equation}
(q^{''},r^{''}) :=
\left\{
\begin{array}{l}
(2,1_{2}^{*}) \; \text{if} \; \Omega=X  \\
\frac{2(n+2)}{n} (1,1) \; \text{if} \; \Omega=Y \cdot
\end{array}
\right.
\nonumber
\end{equation}
Observe that

\begin{equation}
\begin{array}{ll}
\| D^{\bar{k}} u \|_{L_{t}^{q'} L_{x}^{r'}(J)} & \lesssim
\| D^{\bar{k}} u \|^{\theta}_{L_{t}^{q^{''}} L_{x}^{r^{''}} (J) }
\| D^{\bar{k}} u \|^{1- \theta}_{L_{t}^{\infty} L_{x}^{2}(J)},  \\
\| D^{\bar{k}} u \|_{L_{t}^{q^{''}} L_{x}^{r^{''}} (J)} & \lesssim  \| D^{k} u \|^{\theta}_{L_{t}^{q^{''}} L_{x}^{r^{''}} (J) }
\| D u \|^{1- \theta}_{L_{t}^{q^{''}} L_{x}^{r^{''}} (J)}, \; \text{and} \\
\| D^{\bar{k}} u \|_{L_{t}^{\infty} L_{x}^{2} (J)} & \lesssim  \| D^{k} u \|^{\theta}_{L_{t}^{\infty} L_{x}^{2} (J) }
\| D u \|^{1- \theta}_{L_{t}^{\infty} L_{x}^{2} (J)} \cdot
\end{array}
\label{Eqn:Interpkbar}
\end{equation}
This yields (\ref{Eqn:EstDj}).

\end{proof}

\section{Proof of Proposition \ref{Prop:GlobWellPosedCrit}}
\label{Sec:PropGlob}
In this section we prove Proposition \ref{Prop:GlobWellPosedCrit}. \\
\\
Assume that $ \| u \|_{L_{t}^{\frac{2(n+2)}{n-2}} L_{x}^{\frac{2(n+2)}{n-2}} (I_{max})} < \infty$. \\
Let $J:=[0,a]$ be an interval such that $0  \in J$ and $\| u \|_{L_{t}^{\frac{2(n+2)}{n-2}} L_{x}^{\frac{2(n+2)}{n-2}}(J)} \lesssim 1$. By (\ref{Eqn:Strich}) and Corollary \ref{Cor:Nonlin}   we have

\begin{equation}
\begin{array}{ll}
Y(J,u) &  \lesssim  \| u_{0} \|_{\tilde{H}^{k}} +   \| D ( |u|^{\frac{4}{n-2}} u g(|u|) ) \|_{ L_{t}^{\frac{2(n+2)}{n+4}}
L_{x}^{\frac{2(n+2)}{n+4}}(J) } \\
& +  \| D^{k} ( |u|^{\frac{4}{n-2}} u g(|u|) ) \|_{ L_{t}^{\frac{2(n+2)}{n+4}} L_{x}^{\frac{2(n+2)}{n+4}}(J)} \\
& \leq \bar{C} \| u_{0} \|_{\tilde{H}^{k}} + 2 \bar{C} Y(J,u) \| u \|^{\frac{4}{n-2}-}_{L_{t}^{\frac{2(n+2)}{n-2}} L_{x}^{\frac{2(n+2)}{n-2}} (J) }
g(Y(J,u)),
\end{array}
\label{Eqn:QJu}
\end{equation}
where $\bar{C}$ is a fixed, large, and positive constant. \\
Let $0 < \epsilon \ll 1$. We may assume without loss of generality that $\bar{C} \gg \max \left( \| u_0 \|^{100}_{\tilde{H}^{k}}, \frac{1}{\| u_0 \|^{100}_{\tilde{H}^{k}}} \right)$. We divide $I_{max} \cap [0,\infty)$ into subintervals $(I_{j})_{1 \leq j \leq J}$ such that $0 \in I_1$,

\begin{equation}
\begin{array}{ll}
\| u \|_{L_{t}^{\frac{2(n+2)}{n-2}} L_{x}^{\frac{2(n+2)}{n-2}} (I_{j})} & = \frac{\epsilon}{g^{\frac{n-2}{4}+}\left( (2\bar{C})^{j} \| u_{0} \|_{\tilde{H}^{k}} \right) }
\end{array}
\nonumber
\end{equation}
if $1 \leq j <J$ and

\begin{equation}
\begin{array}{ll}
\| u \|_{L_{t}^{\frac{2(n+2)}{n-2}} L_{x}^{\frac{2(n+2)}{n-2}} (I_{J})} & \leq \frac{\epsilon}{g^{\frac{n-2}{4}+}\left( (2\bar{C})^{J} \| u_{0} \|_{\tilde{H}^{k}} \right) } \cdot
\end{array}
\nonumber
\end{equation}
Notice that such a partition always exists since, for $J$ large enough,

\begin{equation}
\begin{array}{ll}
\sum \limits_{j=1}^{J-1} \frac{\epsilon^{\frac{2(n+2)}{n-2}}}{g^{\frac{n+2}{2}+} \left( (2\bar{C})^{j} \| u_{0} \|_{\tilde{H}^{k}} \right) } & \gtrsim
\sum \limits_{j=1}^{J-1} \frac{1}{_{\log{( (2\bar{C})^{j} \| u_{0} \|_{\tilde{H}^{k}}) }} } \\
& = \sum \limits_{j=1}^{J-1} \frac{1}{_{j \log{(2\bar{C})} + \log {(\| u_{0} \|_{\tilde{H}^{k}})} }} \\
& \geq \| u \|^{\frac{2(n+2)}{n-2}}_{L_{t}^{\frac{2(n+2)}{n-2}} L_{x}^{\frac{2(n+2)}{n-2}} (I_{max}) }
\end{array}
\nonumber
\end{equation}
A continuity argument applied to  (\ref{Eqn:QJu}) shows that
$Y (I_1,u) \leq 2 \bar{C} \| u_0 \|_{\tilde{H}^{k}}$. By iteration $Y(I_j,u) \leq (2 \bar{C})^{j} \| u_0 \|_{\tilde{H}^{k}}$. Therefore there exists $Y_{max}$ such that $Y(I_{max},u) \leq Y_{max}$, proceeding similarly on $I_{max} \cap (-\infty,0]$. \\
We write $I_{max}=(a_{max},b_{max})$. Choose $\bar{t} < b_{max}$ close enough to $b_{max}$ so that
$ \| u \|_{L_{t}^{\frac{2(n+2)}{n-2}} L_{x}^{\frac{2(n+2)}{n-2}}([\bar{t},b_{max}))} \ll \delta$, with
$\delta$ defined in Proposition \ref{Prop:LocalWell}. Then there exists a large constant $C$ such that

\begin{equation}
\begin{array}{l}
\| e^{i (t -\bar{t}) \triangle} u(\bar{t}) \|_{L_{t}^{\frac{2(n+2)}{n-2}} L_{x}^{\frac{2(n+2)}{n-2}}([\bar{t},b_{max}))} \\
\leq \| u \|_{L_{t}^{\frac{2(n+2)}{n-2}} L_{x}^{\frac{2(n+2)}{n-2}}([\bar{t},b_{max}))}
+  C \left\| D \left( |u|^{\frac{4}{n-2}} u  g(|u|) \right) \right\|_{L_{t}^{\frac{2(n+2)}{n+4}} L_{x}^{\frac{2(n+2)}{n+4}}([\bar{t},b_{max}])} \\
\leq o(\delta) +  \left( o(\delta) \right)^{\frac{4}{n-2}-}  g(Y_{max})  \| D  u \|_{L_{t}^{\frac{2(n+2)}{n}} L_{x}^{\frac{2(n+2)}{n}}([\bar{t},b_{max}))}
\\
\leq \frac{3 \delta}{4} \cdot
\end{array}
\nonumber
\end{equation}
Also observe that $\| e^{i (t -\bar{t}) \triangle} u(\bar{t}) \|_{L_{t}^{\frac{2(n+2)}{n-2}} L_{x}^{\frac{2(n+2)}{n-2}}([\bar{t},\infty))}
\lesssim \| u(\bar{t}) \|_{\dot{H}^{1}} < \infty$. Hence by the monotone convergence theorem, there exists $\epsilon > 0$ such that \\
$ \| e^{i (t - \bar{t}) \triangle} u(\bar{t}) \|_{L_{t}^{\frac{2(n+2)}{n-2}} L_{x}^{\frac{2(n+2)}{n-2}} ([ \bar{t}, b_{max} + \epsilon])} \leq \delta $. Hence contradiction with Proposition \ref{Prop:LocalWell}.

\section{Proof of Theorem \ref{thm:main}}
\label{Sec:Thmmain}

Let $\theta \in (0,1)$ be a constant that is allowed to change from one line to the other one and such that all the estimates below are true. \\
\\
The proof is made of three steps:

\begin{itemize}

\item finite bound of $\| u \|_{L_{t}^{\frac{2(n+2)}{n-2}} L_{x}^{\frac{2(n+2)}{n-2}} (\mathbb{R})}$ for
$ 1 < k \leq \frac{n}{2}$ if $n \in \{3,4 \}$ and $1 <  k < \frac{4}{3}$ if $n=5$. By the monotone convergence theorem, (\ref{Eqn:SobolevIneq1}), the interpolation between $L_{t}^{\infty} L_{x}^{1_{2}^{*}}$ and $L_{t}^{\bar{Q}} L_{x}^{\bar{R}}$, and Proposition \ref{prop:BoundLong} it is enough to
find for all time $T \geq 0$ a finite bound of $ X([-T,T],u)$. In fact we shall prove that this bound does not depend on time $T$.
By time reversal symmetry $($ i.e if $ t \rightarrow u(t,x)$ is a solution of (\ref{Eqn:BarelySchrod}) then $t \rightarrow \bar{u}(-t,x)$ is also a solution of (\ref{Eqn:BarelySchrod}) $)$ we may WLOG restrict ourselves to $[0,T]$. We define

\begin{equation}
\begin{array}{ll}
\mathcal{F} & : = \left\{ T \in [0, \infty): \sup_{t \in [0,T]} X([0,t],u)  \leq M_{0}  \right \}
\end{array}
\end{equation}
We claim that $\mathcal{F}= [0,\infty)$ for $M_{0}$, a large constant (to be chosen later)  depending only on $ \| u_{0} \|_{\tilde{H}^{k}}$.
Indeed

\begin{itemize}

\item $0 \in \mathcal{F}$.

\item $\mathcal{F}$ is closed by continuity

\item $\mathcal{F}$ is open. Indeed let $T \in \mathcal{F}$. Then, by continuity there exists $\delta > 0$ such that for $T^{'} \in [0, T + \delta]$
we have  $ X ( [0,T^{'}])  \leq 2 M_{0} $. In view of (\ref{Eqn:BoundLong}), this implies, in particular, that

\begin{equation}
\begin{array}{ll}
\| u \|^{\bar{Q}}_{L_{t}^{\bar{Q}} L_{x}^{\bar{R}} ([0,T^{'}])} & \leq C_{1}^{C_1 g^{b_{n}+} ( 2 M_{0} )} \cdot
\end{array}
\label{Eqn:ControlApCrit}
\end{equation}
Let $J:= [0,a] \subset I_{max}$ be an interval such that  $\| u \|_{L_{t}^{\bar{Q}} L_{x}^{\bar{R}} (J)} \lesssim 1$. By (\ref{Eqn:Strich}) and Corollary \ref{Cor:Nonlin} we get $($ observe that $ \| D^{j} u \|_{L_{t}^{\infty-} L_{x}^{2+} (J)}  \lesssim X(J,u) $ for $ j \in \{ 1,k \} $: this follows by interpolation between
$ \| D^{j} u \|_{L_{t}^{2} L_{x}^{1_{2}^{*}} (J)} $ and $\| D^{j} u \|_{L_{t}^{\infty} L_{x}^{2}(J)}$ $)$

\begin{equation}
\begin{array}{ll}
X(J,u) & \lesssim \| u_0 \|_{\tilde{H}^{k}} +
 \left\| D \left( |u|^{\frac{4}{n-2}} u g(|u|) \right) \right\|_{L_{t}^{\breve{Q}} L_{x}^{\breve{R}}(J)}
+  \left\| D^{k} \left( |u|^{\frac{4}{n-2}} u g(|u|) \right) \right\|_{L_{t}^{\breve{Q}} L_{x}^{\breve{R}}(J)} \\
& \leq \bar{C} \| u_0 \|_{\tilde{H}^{k}} + 2 \bar{C} X(J,u) \| u \|^{\frac{4}{n-2}-}_{L_{t}^{\bar{Q}} L_{x}^{\bar{R}}(J)}
g \left( X(J,u) \right),
\end{array}
\nonumber
\end{equation}
where $\bar{C}$ is a fixed, large, and positive constant. \\
Let $ 0 < \epsilon \ll 1$. From the estimate above we see that if $J$ satisfies
$ \| u \|_{L_{t}^{\bar{Q}} L_{x}^{\bar{R}} (J)} = \frac{\epsilon}{g^{\frac{n-2}{4}+} ( 2 \bar{C} \| u_{0}
\|_{\tilde{H}^{k}} ) } $ then a simple continuity argument  shows that

\begin{equation}
\begin{array}{ll}
X(J,u)   & \leq 2 \bar{C} \| u_{0} \|_{\tilde{H}^{k}} \cdot
\end{array}
\nonumber
\end{equation}
We divide $[0,T^{'}]$ into subintervals $(J_{i})_{1 \leq i \leq I}$ such that $ \| u \|_{L_{t}^{\bar{Q}} L_{x}^{\bar{R}}
(J_{i})} = \frac{\epsilon}{g^{\frac{n-2}{4}+} \left( (2\bar{C})^{i} \| u_{0} \|_{\tilde{H}^{k}} \right) } $, $ 1 \leq i < I$ and  $ \| u
\|_{L_{t}^{\bar{Q}} L_{x}^{\bar{R}} (J_{I})} \leq \frac{\epsilon}{g^{\frac{n-2}{4}+} \left( (2\bar{C})^{I} \| u_{0}
\|_{\tilde{H}^{k}} \right) } $. Notice that such a partition exists by (\ref{Eqn:ControlApCrit}) and the following inequality

\begin{equation}
\begin{array}{ll}
C_{1}^{C_{1} g^{b_{n}+} (2M_{0})} & \gtrsim
\sum \limits_{i=1}^{I-1} \frac{1}{_{g^{ \frac{\bar{Q}(n-2)}{4}+} \left( (2\bar{C})^{i} \| u_{0} \|_{\tilde{H}^{k}} \right)}} \\
& \geq \sum \limits_{i=1}^{I-1} \frac{1} {_{
\log^{\frac{\bar{Q}(n-2)\gamma}{4} +} {\left( \log{( 10 + (2\bar{C})^{2i} \| u_{0} \|^{2}_{\tilde{H}^{k}} )} \right)}}} \\
& \gtrsim  \sum \limits_{i=1}^{I-1} \frac{1} {_{ \log^{\frac{\bar{Q}(n-2)\gamma}{4}+} \left( 2i \log{(2\bar{C})} + 2 \log{ ( \| u_{0} \|_{\tilde{H}^{k}} ) } \right)  }} \\
& \gtrsim  \sum \limits_{i=1}^{I-1} \frac{1}{ i^{\frac{1}{2}}} \\
& \gtrsim  I^{\frac{1}{2}} \cdot \\
\end{array}
\label{Eqn:EstI}
\end{equation}
Moreover, by iterating over $i$  we get

\begin{equation}
\begin{array}{ll}
X([0,T^{'}],u) & \leq (2\bar{C})^{I+1} \| u_{0} \|_{\tilde{H}^{k}}
\end{array}
\nonumber
\end{equation}
Therefore by (\ref{Eqn:EstI}) there exists a positive constant $C^{'}$

\begin{equation}
\begin{array}{l}
\log{I}  \lesssim \log{ (C^{'})}  + C_{1} \log^{(b_{n}+) \gamma}  \left( \log{(10 + 4 M^{2}_{0})}   \right) \log{( C_{1} )}
\end{array}
\nonumber
\end{equation}
and for $ M_{0}$ large enough

\begin{equation}
\begin{array}{l}
\log{ (C^{'})}  + C_{1} \log^{(b_{n} +) \gamma}  \left( \log{(10 + 4 M^{2}_{0})}   \right) \log{ \left(  C_{1} \right)}
\ll \log { \left( \frac{  \log{ \left( \frac{M_{0}}{ \| u_{0} \|_{\tilde{H}^{k}}} \right) }} {_{\log{(2 \bar{C})}}} \right)}
\end{array}
\nonumber
\end{equation}
since (recall that $\gamma < \frac{1}{b_{n}}$)

\begin{equation}
\begin{array}{ll}
\frac{ \log{ (C^{'})}  + C_{2} \log^{(b_{n}+) \gamma}  \left( \log{(10 + 4 M^{2}_{0})}   \right) \log{(C_{1}) }} {\log { \left( \frac{  \log{ \left( \frac{M_{0}}{ \| u_{0} \|_{\tilde{H}^{k}}} \right) }}
{_{\log{(2 \bar{C})}}} \right)}} &
\rightarrow_{M_{0} \rightarrow \infty} 0 \cdot
\end{array}
\nonumber
\end{equation}

\end{itemize}
Hence $X(\mathbb{R},u) < \infty$. Observe that this implies a finite bound of
$\| D^{j} u ||_{L_{t}^{\frac{2(n+2)}{n}} L_{x}^{\frac{2(n+2)}{n}} (\mathbb{R})}$, $j \in \{1,k \}$, since there exists
$\theta \in[0,1]$ such that

\begin{equation}
\begin{array}{ll}
\| D^{j} u \|_{L_{t}^{\frac{2(n+2)}{n}} L_{x}^{\frac{2(n+2)}{n}}(\mathbb{R})} & \lesssim
\| D^{j} u \|^{\theta}_{L_{t}^{2} L_{x}^{1_{2^{*}}} (\mathbb{R})}  \| D^{j} u \|^{1- \theta}_{L_{t}^{\infty} L_{x}^{2}(\mathbb{R})} \cdot
\end{array}
\label{Eqn:InterpoDju}
\end{equation}

\item Finite bound of $Y(\mathbb{R},u)$ for all $k \in I_{n}$: this follows from Proposition \ref{Prop:PersReg}.

\item Scattering: it is enough to prove that $e^{- i t \triangle} u(t)$ has a limit as $t \rightarrow \infty$ in $\tilde{H}^{k}$. If $t_1$ is large enough and $t_{1}< t_{2}$
then by Corollary \ref{Cor:Nonlin}, Proposition \ref{Prop:EstHighReg}, and by dualizing (\ref{Eqn:Strich}) with $G=0$ (more precisely the estimate
$\| D^{j} u \|_{L_{t}^{\frac{2(n+2)}{n}} L_{x}^{\frac{2(n+2)}{n}} ([t_1,t_2])} \lesssim \| u_0 \|_{\dot{H}^{j}}$ if $j \in \{1,k \}$ ) we get

\begin{equation}
\begin{array}{l}
\| e^{-i t_{1} \triangle} u(t_{1}) - e^{- i t_{2} \triangle} u(t_{2}) \|_{\tilde{H}^{k}} \\
 \lesssim \| D^{k} \left( |u|^{\frac{4}{n-2}} u g(|u|)  \right) \|_{L_{t}^{\frac{2(n+2)}{n+4}} L_{x}^{\frac{2(n+2)}{n+4}} ([t_{1},t_{2}]) } +
\| D \left( |u|^{\frac{4}{n-2}} u g(|u|)  \right) \|_{L_{t}^{\frac{2(n+2)}{n+4}} L_{x}^{\frac{2(n+2)}{n+4}} ([t_{1},t_{2}]) } \\
 \lesssim \| u \|^{\frac{4}{n-2}-}_{L_{t}^{\frac{2(n+2)}{n-2}} L_{x}^{\frac{2(n+2)}{n-2}} ([t_{1},t_{2}]) }
\end{array}
\nonumber
\end{equation}
and  we conclude that given $\epsilon > 0$ there exists $A(\epsilon)$ large enough such that if $t_{2} \geq t_{1} \geq A(\epsilon)$ then $ \| e^{-i t_{1} \triangle} u(t_{1}) - e^{- i t_{2} \triangle} u(t_{2}) \|_{\tilde{H}^{k}} \leq \epsilon  $. The Cauchy criterion is
satisfied. Hence scattering.

\end{itemize}

\section{Proof of Proposition \ref{prop:BoundLong}} \label{Sec:Boundlong}

In this section we prove Proposition \ref{prop:BoundLong}. \\
\\
Let $(\alpha,\beta,\delta)$ be defined as follows:

\begin{equation}
(\alpha,\beta,\delta):=
\left\{
\begin{array}{l}
\left( \frac{n-2}{2}-, - \left(  \frac{n-2}{2} + \right) , \frac{n-2}{n}+ \right) \; \text{if} \; n \in \{3,4\} \\
\left( 1- ,-(1+), \frac{3}{5}+ \right) \; \text{if} \; n=5 \cdot
\end{array}
\right.
\nonumber
\end{equation}

The proof relies upon a Morawetz-type estimate:

\begin{lem}
Let $u$ be an $\tilde{H}^{k}$ $-$ solution of (\ref{Eqn:BarelySchrod}) on a compact interval $I$. Let $A > 1$. Then

\begin{equation}
\begin{array}{ll}
\int_{I} \int_{|x| \leq A |I|^{\frac{1}{2}}} \frac{ \tilde{F}(u,\bar{u}) (t,x)}{|x|} dx \, dt & \lesssim E  A  |I|^{\frac{1}{2}}
\end{array}
\label{Eqn:MorawEst}
\end{equation}
with

\begin{equation}
\begin{array}{ll}
\tilde{F}(u,\bar{u})(t,x) & := \int_{0}^{|u|(t,x)} s^{\frac{n+2}{n-2}} \left( \frac{4}{n-2} g(s) + s g^{'}(s) \right) \, ds
\end{array}
\label{Eqn:DeftildeF}
\end{equation}
\label{lem:Morawest}
\end{lem}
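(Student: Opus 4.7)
The plan is to prove (\ref{Eqn:MorawEst}) by means of a truncated Morawetz identity. Set $R := A\,|I|^{\frac{1}{2}}$ and construct a smooth radial weight $a(x) = \phi(|x|)$ with $\phi'(r) = \chi(r/R)$, where $\chi \in C^{\infty}([0,\infty))$ satisfies $\chi \equiv 1$ on $[0,1]$, $\chi \equiv 0$ on $[2,\infty)$, $0 \leq \chi \leq 1$ and $\chi' \leq 0$. Then $a(x) = |x|$ on $\{|x|\leq R\}$, $a \lesssim R$ globally, $|\nabla a| \leq 1$, and on $\{|x|\leq R\}$ one has $\Delta a = (n-1)/|x|$; the Hessian $a_{jk} := \partial_{j}\partial_{k} a$ is positive semidefinite. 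Introduce the Morawetz action
\begin{equation}
M_{a}(t) := 2\,\mathrm{Im}\int_{\mathbb{R}^{n}} \overline{u(t,x)}\,\nabla a(x)\cdot \nabla u(t,x)\,dx.
\nonumber
\end{equation}
By Cauchy-Schwarz, the compact support of $\nabla a$, the local mass estimate (\ref{Eqn:MassControl}), and conservation of energy,
\begin{equation}
|M_{a}(t)| \;\lesssim\; \|\nabla u(t)\|_{L^{2}}\cdot Mass\bigl(B(0,2R),u(t)\bigr) \;\lesssim\; E\cdot R \;=\; E\,A\,|I|^{\frac{1}{2}}.
\nonumber
\end{equation}

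Next I would derive the standard Morawetz differential identity by multiplying (\ref{Eqn:BarelySchrod}) by $\nabla a\cdot\nabla\bar u + \tfrac{1}{2}(\Delta a)\bar u$, integrating by parts, and taking imaginary parts, yielding
\begin{equation}
\partial_{t} M_{a}(t) \;=\; 4\int a_{jk}\,\partial_{j}u\,\partial_{k}\bar u\,dx \;-\; \int \Delta^{2}a \cdot |u|^{2}\,dx \;+\; 2\int \Delta a \cdot \tilde F(u,\bar u)\,dx.
\nonumber
\end{equation}
The nonlinear contribution organizes precisely into the quantity $\tilde F$ of (\ref{Eqn:DeftildeF}): writing the nonlinearity as $f(|u|)\,u/|u|$ with $f(s) := s^{(n+2)/(n-2)} g(s)$ and $F(s) := \int_{0}^{s}f$, a short integration by parts gives $|u|\,f(|u|) - 2F(|u|) = \tilde F(|u|)$, matching (\ref{Eqn:DeftildeF}) after a direct differentiation. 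The Hessian term is nonnegative. Since $\tilde F \geq 0$ (because $g > 0$) and $\Delta a \geq 0$, on the transition annulus $\{R\leq |x|\leq 2R\}$ the nonlinear contribution is nonnegative and may be discarded; on the core ball it produces exactly $2(n-1)\int_{|x|\leq R}\tilde F/|x|\,dx$, which is the object of interest.

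Integrating over $I$, the boundary contribution $M_{a}(t_{2}) - M_{a}(t_{1})$ is bounded by $2\sup_{t \in I}|M_{a}(t)| \lesssim E\,A\,|I|^{\frac{1}{2}}$, and the Hessian term, being nonnegative, only improves the inequality. The bi-Laplacian error splits into two pieces: on the annulus $\{R \leq |x|\leq 2R\}$, $|\Delta^{2}a| \lesssim R^{-3}$ and (\ref{Eqn:MassControl}) yields $\int_{|x|\leq 2R}|u|^{2}\lesssim R^{2}E$, so this region contributes at most $|I|\,R^{-3}\cdot R^{2}E = |I|\,E/R \lesssim E\,A\,|I|^{\frac{1}{2}}$ (using $R = A|I|^{\frac{1}{2}}$ and $A\geq 1$); on the core $\{|x|\leq R\}$, $\Delta^{2}a = \Delta^{2}|x|$, which for $n \geq 4$ equals $-(n-1)(n-3)|x|^{-3}$ and thus contributes to $\partial_{t}M_{a}$ with the favorable sign (hence is discardable after rearrangement). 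For $n=3$, $\Delta^{2}|x|$ carries a Dirac mass at the origin, which is removed by first working with a smooth approximant $\phi_{\varepsilon}$ of $\phi$ that is strictly convex near $r=0$ and passing to the limit in the spirit of \cite{taorad}. Collecting these bounds proves (\ref{Eqn:MorawEst}).

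The main obstacle is the handling of the bi-Laplacian term in low dimensions and near the origin: in $n=3$ the distributional singularity of $\Delta^{2}|x|$ at $0$ cannot be treated bluntly and must be tamed via mollification and a limiting procedure that exploits the radial symmetry, while in $n = 4, 5$ the positive integrand $|u|^{2}/|x|^{3}$ near the origin must be controlled via a Hardy-type inequality in conjunction with (\ref{Eqn:MassControl}). Arranging the signs of every error term cleanly so that all corrections are absorbed into the target bound $E\,A\,|I|^{\frac{1}{2}}$ is the technically delicate step, but is standard in the radial setting.
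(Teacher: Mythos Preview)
Your approach is the standard truncated Morawetz argument that the paper refers to (via \cite{triroyschrod,taorad}); the identification of the nonlinear term with $\tilde{F}$ via $|u|f(|u|)-2F(|u|)$ is correct, and the bound $|M_a(t)|\lesssim E\,R$ is exactly the mechanism in the references. Two minor corrections and one genuine omission are worth flagging.

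\textbf{Minor corrections.} With your choice $\phi'(r)=\chi(r/R)$ and $\chi'\le 0$, the Hessian eigenvalue in the radial direction is $\phi''(r)=\chi'(r/R)/R\le 0$ on the annulus, so $a_{jk}$ is \emph{not} positive semidefinite there, and likewise $\Delta a=\phi''+(n-1)\phi'/r$ need not be nonnegative near the outer edge of the annulus. This is harmless: on $R\le|x|\le 2R$ both $|a_{jk}|$ and $|\Delta a|$ are $O(R^{-1})$, so the Hessian error contributes at most $|I|\,R^{-1}E\lesssim EA|I|^{1/2}$, and the annular nonlinear piece is bounded by $|I|\,R^{-1}\int \tilde{F}\lesssim |I|\,R^{-1}E\lesssim EA|I|^{1/2}$ via (\ref{Eqn:EquivF}) and the energy bound. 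You should state these as error terms rather than claim favorable signs.

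\textbf{Omission.} The paper's own discussion (the Remark following the lemma) stresses that the integration-by-parts identity is only formal for $\tilde{H}^k$ solutions with $k\le n/2$, and must be justified by approximation with smooth solutions. Crucially, for $n=5$ the nonlinearity $|u|^{4/3}u\,g(|u|)$ is not $C^2$, so even for smooth $u$ the multiplier computation cannot be carried out directly: one must first regularize the \emph{nonlinearity} itself, derive the identity for smooth solutions of the regularized equation, and then pass to the limit. Your proposal addresses regularization of the weight near the origin but does not mention this second layer of approximation, which is the step the paper singles out as nontrivial for $n=5$.
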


\begin{rem}
If $k > \frac{n}{2}$ then the proof of (\ref{Eqn:MorawEst}) is in \cite{triroyschrod}. If not, it is mostly
contained in \cite{triroyschrod}. Indeed, the proof relies on integration by parts of the local momentum
identity multiplied by an appropriate weight. In the case where $n \in \{3,4 \}$, the integration by parts
holds for smooth solutions of (\ref{Eqn:BarelySchrod}) (i.e solutions in $\tilde{H}^{p}$ with exponents $p$
large enough). Then (\ref{Eqn:MorawEst}) holds
for $\tilde{H}^{k}$ solutions for  $k \in I_n$ by a standard approximation argument with smooth solutions.
If $n=5$ then the nonlinearity  is not that smooth: its derivatives of $(z, \bar{z}) \rightarrow |z|^{\frac{4}{n-2}} z $  are not even twice differentiable. So
one should first smooth out the nonlinearity, obtain an identity similar to the local momentum identity
for smooth solutions (i.e solutions lying in Sobolev spaces with large exponent) of the ``smoothed'' equation and
then take limit in $\tilde{H}^{k}$ for $k \in I_n$ by again a standard approximation argument with smooth solutions.
\end{rem}

We prove now Proposition \ref{prop:BoundLong}. The proof follows closely an argument in \cite{taorad} (this argument was also used
in \cite{triroyschrod}) and it is based upon methods of concentration (see e.g \cite{bourg}). \\

\fbox{\textbf{Step 1}} \\

We divide the interval $J=[t_{1},t_{2}]$ into subintervals $(J_{l}:=[\bar{t}_{l},\bar{t}_{l+1}])_{1  \leq  l \leq L}$ such that

\begin{equation}
\begin{array}{ll}
\| u \|^{\bar{Q}}_{L_{t}^{\bar{Q}}  L_{x}^{\bar{R}} (J_{l}) } & = \eta_{1}
\end{array}
\end{equation}
and

\begin{equation}
\begin{array}{ll}
\| u \|^{\bar{Q}}_{L_{t}^{\bar{Q}}  L_{x}^{\bar{R}} (J_{L}) } & \leq \eta_{1},
\end{array}
\label{Eqn:LastConc}
\end{equation}
with $ 0 < c_{1} \ll 1$ and $\eta_{1} := \frac{c_{1}}{_{g^{\frac{(n-2) \bar{Q}}{6-n}+} (M) }} $. It is enough to find an upper bound of $L$. In view of (\ref{Eqn:BoundLong}) , we may replace WLOG the $``\leq''$ sign with the $``=''$ sign in
(\ref{Eqn:LastConc}).  \\
Notice that the value of this parameter, along with the values of the other parameters $\eta_{2}$, $\eta_{3}$ and $\eta$ are chosen so that
all the constraints appearing in the process to find an upper bound of $L$ are satisfied and so that $L \eta_{1}$ is as small as possible.\\
\\
\fbox {\textbf{Step 2}} \\

We first prove that some norms on these intervals $J_{l}$ are bounded.

\begin{res}
We have
\begin{equation}
\begin{array}{ll}
\| D u \|_{L_{t}^{\infty-} L_{x}^{2+} (J_{l})} & \lesssim 1
\end{array}
\end{equation}
\label{res:ControlDu}
\end{res}

\begin{proof}

From (\ref{Eqn:Strich}), the conservation of the energy, Corollary \ref{Cor:Nonlin}, and
Proposition \ref{Prop:NonlinFracSmooth} combined with (\ref{Eqn:Sobklarge}), we get

\begin{equation}
\begin{array}{ll}
\| D u \|_{L_{t}^{\infty-} L_{x}^{2+} (J_{l})} & \lesssim \| D u (\bar{t}_{l}) \|_{L^{2}} +
\left\| D ( |u|^{\frac{4}{n-2}} u g(|u|) ) \right\|_{L_{t}^{\breve{Q}} L_{x}^{\breve{R}} (J_{l}) } \\
& \lesssim 1 + \| D u \|_{L_{t}^{\infty-} L_{x}^{2+}(J_l)}
\| u \|^{\frac{4}{n-2}-}_{L_{t}^{\bar{Q}} L_{x}^{\bar{R}} (J_l)} g(M)
\end{array}
\nonumber
\end{equation}
Therefore, by a continuity argument $($ Observe that the above estimate also holds if $J_l$ is replaced with $K_l$ where $K_l \subset J_l$ $)$, we conclude that $\| D u \|_{L_{t}^{\infty-} L_{x}^{2+} (J_{l})} \lesssim  1$.

\end{proof}

\begin{res}
Let $\tilde{J}:= [ \tilde{t}_1, \tilde{t}_2 ] \subset J_l$ be such that

\begin{equation}
\begin{array}{l}
\frac{\eta_{1}}{2} \leq \| u \|^{\bar{Q}}_{L_{t}^{\bar{Q}} L_{x}^{\bar{R}} (\tilde{J})} \leq \eta_{1}
\end{array}
\end{equation}
Then
\begin{equation}
\begin{array}{ll}
\| u_{l,\tilde{t}_{j}} \|^{\bar{Q}}_{ L_{t}^{\bar{Q}}  L_{x}^{\bar{R}} (\tilde{J})} & \gtrsim \eta_{1}
\end{array}
\label{Eqn:LowerBoundLinRes}
\end{equation}
for $j \in \{1,2 \}$. \label{res:linearpartsubs}
\end{res}

\begin{proof}
From Result \ref{res:ControlDu} we get

\begin{equation}
\begin{array}{ll}
\left\| u - u_{l,\tilde{t}_{j}} \right\|_{L_{t}^{\bar{Q}} L_{x}^{\bar{R}} (\tilde{J})}  & \lesssim \left\| D( |u|^{\frac{4}{n-2}} u g(|u|) )
\right\|_{L_{t}^{\breve{Q}} L_{x}^{\breve{R}} (\tilde{J}) } \\
& \lesssim \| D u \|_{L_{t}^{\infty-}  L_{x}^{2^{+}} (\tilde{J}) } \| u \|_{L_{t}^{\bar{Q}}
L_{x}^{\bar{R}} (\tilde{J})}^{\frac{4}{n-2}-} g(M) \\
& \lesssim \|  u \|_{L_{t}^{\bar{Q}} L_{x}^{\bar{R}} (\tilde{J})}^{\frac{4}{n-2}-} g(M) \\
& \ll \eta_{1}^{\frac{1}{_{\bar{Q}}}} \cdot
\end{array}
\end{equation}
Therefore (\ref{Eqn:LowerBoundLinRes}) holds.

\end{proof}

\fbox{\textbf{Step 3}} \\

Let

\begin{equation}
\begin{array}{l}
\eta_{2}:= c_{2}
\eta_{1}^{
\left(  1  + n \left( \frac{1}{_{2}} - \frac{1}{_{\bar{R}}} \right) \right)
\left( \frac{1}{1 - \frac{_{1_{2}^{*}}}{_{\bar{R}}}}
+ \frac{n}{2 \alpha} \right) } g^{- \frac{n \delta \bar{Q}}{2 \alpha} \left( 1 + n \left( \frac{1}{_{2}} - \frac{1}{_{\bar{R}}} \right) \right) } (M)
\end{array}
\label{Eqn:Dfneta2}
\end{equation}
with $0 < c_{2} \ll c_{1}$. An interval $J_{l_{0}} = [\bar{t}_{l_{0}}, \bar{t}_{l_{0}+1}]$ of the partition $(J_{l})_{ 1 \leq l \leq L}$ is exceptional if

\begin{equation}
\begin{array}{ll}
\| u_{l,t_{1}} \|^{\bar{Q}}_{L_{t}^{\bar{Q}} L_{x}^{\bar{R}} (J_{l_{0}})}
+ \| u_{l,t_{2}} \|^{\bar{Q}}_{L_{t}^{\bar{Q}} L_{x}^{\bar{R}} (J_{l_{0}}) } & \geq \eta_{2}
\end{array}
\end{equation}
Notice that, in view of (\ref{Eqn:Strich}) and (\ref{Eqn:SobolevIneq1}), it is easy to find an upper bound of the cardinal of the exceptional
intervals:

\begin{equation}
\begin{array}{ll}
\card{\{ J_{l}: \, J_{l} \, \mathrm{exceptional} \}} & \lesssim \eta_{2}^{-1}
\end{array}
\label{Eqn:BoundCardExcep}
\end{equation}

\fbox{ \textbf{Step 4} } \\

Now we prove that on  each unexceptional subintervals $J_{l}$ there is a ball for which we have a mass concentration.

\begin{res}
There exists an $x_{l} \in \mathbb{R}^{n}$, two positive constants $c_3 \ll 1 $ and $C_3 \gg 1$ such that for each unexceptional
interval $J_{l}$ and for $t \in J_{l}$

\begin{equation}
\begin{array}{ll}
\textrm{Mass} \left( u(t), B(x_{l}, C_3  g^{\gamma_3} (M) |J_{l}|^{\frac{1}{2}} )   \right) & \geq c_3 g^{-
\gamma_3} (M) |J_{l}|^{\frac{1}{2}}
\end{array}
\label{Eqn:ConcentrationMassn3}
\end{equation}
with

\begin{equation}
\begin{array}{l}
\gamma_{3} := \left( \frac{n-2}{6-n} \left( \frac{1}{1 - \frac{1_{2}^{*}}{_{\bar{R}}}} + \frac{n}{2 \alpha} \right) + \frac{n \delta}{2 \alpha} \right) + \cdot
\end{array}
\nonumber
\end{equation}

\label{res:ConcentrationMass}
\end{res}

\begin{proof}

By time translation invariance $($ i.e if $u$ is a solution of (\ref{Eqn:BarelySchrod}) and $t_{0} \in \mathbb{R}$ then $(t,x) \rightarrow
u(t-t_{0},x)$ is also a solution of (\ref{Eqn:BarelySchrod}) $)$ we may assume that $\bar{t}_{l}=0$. By using the pigeonhole principle and the
reflection symmetry (if necessary) $($  if $u$ is a solution of (\ref{Eqn:BarelySchrod})
 then $ (t,x) \rightarrow \bar{u}(-t,x) $ is also a solution of (\ref{Eqn:BarelySchrod}) $)$ we may assume that

\begin{equation}
\begin{array}{ll}
\| u \|^{\bar{Q}}_{L_{t}^{\bar{Q}} L_{x}^{\bar{R}} \left(\frac{|J_l|}{2}, |J_l| \right)} & \geq \frac{\eta_{1}}{4}
\end{array}
\label{Eqn:CriticalNormLarge}
\end{equation}
By the pigeonhole principle there exists $t_{*} $ such that $[(t_{*} -\eta_{3})|J_{l}|, t_{*} |J_{l}| ] \subset \left[0, \frac{|J_{l}|}{2}
\right] $ (with $ 0 < \eta_{3} \ll 1$) with

\begin{equation}
\begin{array}{ll}
\int_{(t^{*} - \eta_{3}) |J_{l}|}^{t_{*} | J_{l}|} \left( \int_{\mathbb{R}^{n}}  | u(t,x)|^{\bar{R}} \, dx \right)^{\frac{\bar{Q}}{_{\bar{R}}}} \, dt \lesssim \eta_{1} \eta_{3}, \; \text{and}
\end{array}
\end{equation}

\begin{equation}
\begin{array}{ll}
\left( \int_{\mathbb{R}^{n}} | u_{l,t_{1}} ( (t_{*} -\eta_{3}) |J_{l}|,x ) |^{\bar{R}} \, dx \right)^{\frac{\bar{Q}}{_{\bar{R}}}} & \lesssim \frac{\eta_{2}}{|J_{l}|}\cdot
\end{array}
\label{Eqn:Ptwiseult1}
\end{equation}
Applying Result \ref{res:linearpartsubs} to (\ref{Eqn:CriticalNormLarge}) we have

\begin{equation}
\begin{array}{ll}
\int_{t_{*} |J_{l}|}^{|J_{l}|}
\left(
\int_{\mathbb{R}^{n}} | e^{i(t -t_{*}|J_{l}|) \triangle} u(t^{*}|J_{l}|,x) |^{\bar{R}} \, dx
\right)^{\frac{\bar{Q}}{_{\bar{R}}}}
 \, dt
& \gtrsim \eta_{1} \cdot
\end{array}
\label{Eqn:LowerBoundLin}
\end{equation}
By Duhamel formula we have

\begin{equation}
\begin{array}{ll}
u(t_{*} |J_{l}|) & = e^{i(t_{*}|J_{l}|- t_{1} ) \triangle} u(t_{1}) - i \int_{t_{1}}^{ (t_{*} - \eta_{3}) |J_{l}|  } e^{i(t_{*} |J_{l}|-s)
\triangle} (
|u(s)|^{\frac{4}{n-2}} u(s) g(|u(s)|) ) \, ds \\
& - i \int_{(t_{*} - \eta_{3}) |J_{l}|}^{t_{*}|J_{l}|} e^{i(t_{*} |J_{l}|-s) \triangle} ( |u(s)|^{\frac{4}{n-2}} u(s) g(|u(s)|) ) \, ds
\end{array}
\end{equation}
and, composing this equality with $e^{i(t-t_{*}|J_{l}|) \triangle}$ we get

\begin{equation}
\begin{array}{ll}
e^{i(t-t_{*}|J_{l}|) \triangle} u(t_{*} |J_{l}|) & = u_{l,t_{1}}(t) - i \int_{t_{1}}^{ (t_{*} - \eta_{3}) |J_{l}|  } e^{i(t-s)
\triangle} ( |u(s)|^{\frac{4}{n-2}} u(s) g(|u(s)|) ) \, ds \\
& - i \int_{(t_{*} - \eta_{3}) |J_{l}|}^{t_{*}|J_{l}|} e^{i(t -s) \triangle} ( |u(s)|^{\frac{4}{n-2}} u(s) g(|u(s)|) ) \, ds \\
& = u_{l,t_{1}}(t) + v_{1}(t) + v_{2}(t)
\end{array}
\label{Eqn:Decompulti}
\end{equation}
We get from a variant of the Strichartz estimates (\ref{Eqn:Strich})

\begin{equation}
\begin{array}{l}
\| v_{2} \|_{L_{t}^{\bar{Q}} L_{x}^{\bar{R}}  ([t_{*} |J_{l}|, |J_{l}|])
\cap L_{t}^{\infty} D^{-1} L_{x}^{2}([t_{*} |J_{l}|, |J_{l}|])} \\
\lesssim \left\| D \left( |u|^{\frac{4}{n-2}} u g(|u|) \right) \right\|_{L_{t}^{\breve{Q}}
L_{x}^{\breve{R}}([ (t_{*} - \eta_{3}) |J_{l}|, t^{*} |J_{l}| ])} \\
\lesssim \|  D u  \|_{L_{t}^{\infty-} L_{x}^{2+} ([ (t_{*} -\eta_{3}) |J_{l}|,  t^{*}|J_{l}| ])  } \| u
\|^{\frac{4}{n-2}-}_{L_{t}^{\bar{Q}} L_{x}^{\bar{R}} ([ (t_{*} - \eta_{3}) |J_{l}|, t^{*} |J_{l}| ]) } g(M) \\
\lesssim (\eta_{1} \eta_{3})^{\frac{4}{_{\bar{Q}(n-2)}}-}  g(M) \\
 \ll \eta_{1}^{\frac{1}{_{\bar{Q}}}}
\end{array}
\label{Eqn:Dv2Ineq}
\end{equation}
Notice also that $\eta_{2} \ll \eta_{1}$ and that $J_{l}$ is non-exceptional. Therefore $ \| u_{l,t_{1}} \|^{\bar{Q}}_{L_{t}^{\bar{Q}}
L_{x}^{\bar{R}} ([t_{*} |J_{l}|, |J_{l}| ]) } \ll \eta_{1}$ and combining this inequality with (\ref{Eqn:Dv2Ineq}) and
(\ref{Eqn:LowerBoundLin}) we conclude that the  $ L_{t}^{\bar{Q}} L_{x}^{\bar{R}} $ norm of $v_{1}$ on $[t_{*} |J_{l}|,
|J_{l}| ]$ is bounded from below:

\begin{equation}
\begin{array}{ll}
\| v_{1} \|^{\bar{Q}}_{L_{t}^{\bar{Q}} L_{x}^{\bar{R}} ([t_{*} |J_{l}|, |J_{l}| ])} & \gtrsim \eta_{1}
\end{array}
\end{equation}
By (\ref{Eqn:Decompulti}) and (\ref{Eqn:Dv2Ineq}) we also have an upper bound of the  $ L_{t}^{\bar{Q}}
L_{x}^{\bar{R}} $ norm of $v_{1}$ on $[t_{*} |J_{l}|, |J_{l}| ]$

\begin{equation}
\begin{array}{ll}
\| v_{1} \|^{\bar{Q}}_{L_{t}^{\bar{Q}} L_{x}^{\bar{R}}  ([t_{*} |J_{l}|, |J_{l}|])
\cap L_{t}^{\infty} D^{-1} L_{x}^{2}([t_{*} |J_{l}|, |J_{l}|])  } &  \lesssim 1
\end{array}
\end{equation}
Now we use a lemma that is proved in Subsection \ref{Subsec:Lemmaregv}. This lemma provides some information regarding the regularity of $v_1$.

\begin{lem}
We have

\begin{equation}
\begin{array}{ll}
\| v_{1,h} - v_{1} \|_{L_{t}^{\infty}  L_{x}^{\bar{R}} ( [t_{*} |J_{l}|, |J_{l}|]) } & \lesssim |h|^{\alpha} |J_{l}|^{\beta}
g^{\delta}(M) \cdot
\end{array}
\label{Eqn:Regv}
\end{equation}

\label{lem:regv}
\end{lem}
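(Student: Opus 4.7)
The plan is to reduce the H\"older-type bound on $v_{1}$ to a dispersive estimate, exploiting the fact that the integrand defining $v_{1}$ is cut off at $(t_{*}-\eta_{3})|J_{l}|$ while we evaluate on $[t_{*}|J_{l}|, |J_{l}|]$, so that $|t - s| \geq \eta_{3} |J_{l}|$ uniformly. Throughout, write $N(u) := |u|^{\frac{4}{n-2}} u \, g(|u|)$.

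First, since spatial translation commutes with $e^{i\tau \triangle}$, I transfer the translation onto the nonlinearity:
\[
v_{1,h}(t) - v_{1}(t) = -i \int_{t_{1}}^{(t_{*}-\eta_{3})|J_{l}|} e^{i(t-s)\triangle} \bigl( N(u(s))_{h} - N(u(s)) \bigr) \, ds.
\]
Combining the pointwise dispersive estimate (\ref{Eqn:DispIneq}) with the H\"older-type translation inequality
\[
\| f_{h} - f \|_{L^{\bar{R}'}_{x}} \lesssim |h|^{\alpha} \| |D|^{\alpha} f \|_{L^{\bar{R}'}_{x}}, \qquad 0 < \alpha < 1,
\]
(a standard multiplier bound coming from $|e^{-i h \cdot \xi} - 1| \lesssim (|h||\xi|)^{\alpha}$) yields
\[
\| v_{1,h}(t) - v_{1}(t) \|_{L^{\bar{R}}_{x}} \lesssim |h|^{\alpha} \int_{t_{1}}^{(t_{*}-\eta_{3})|J_{l}|} \frac{\| |D|^{\alpha} N(u(s)) \|_{L^{\bar{R}'}_{x}}}{|t-s|^{n(\frac{1}{2} - \frac{1}{\bar{R}})}} \, ds.
\]
A direct check shows $n\bigl(\frac{1}{2} - \frac{1}{\bar{R}}\bigr) > 1$ in all three dimensions (roughly $3/2$, $2$, $2$ respectively), so the temporal integral converges under $|t-s| \geq \eta_{3}|J_{l}|$ and is dominated by $|J_{l}|^{1 - n(\frac{1}{2} - \frac{1}{\bar{R}})}$, matching the prescribed $|J_{l}|^{\beta}$ up to an arbitrarily small loss.

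The main technical obstacle is then to bound $\sup_{s} \| |D|^{\alpha} N(u(s)) \|_{L^{\bar{R}'}_{x}}$ by $g^{\delta}(M)$. I would apply a pointwise-in-time version of the fractional Leibniz rule (as in Proposition \ref{Prop:FracLeibn} or Corollary \ref{Cor:Nonlin}) to factorize it into a product $\| |u|^{\frac{4}{n-2}} g(|u|) \|_{L^{p_{1}}_{x}} \, \| |D|^{\alpha} u \|_{L^{p_{2}}_{x}}$ for a suitably chosen H\"older pair $(p_{1}, p_{2})$, and then invoke the pointwise Jensen-type inequality from Proposition \ref{Prop:Jensen} (its $q = \infty$ case) to pull the $g$-factor out of the first term, replacing $g(|u|)$ by $g(M)$ at the cost of only a small power of the a priori bound. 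The factor $\| |D|^{\alpha} u \|_{L^{p_{2}}_{x}}$ is controlled by interpolation between $\| D u \|_{L^{2}_{x}} \lesssim 1$ (energy conservation) and $\| D^{\bar{k}} u \|_{L^{2}_{x}} \lesssim M$, with the exponent on $M$ arranged to be absorbed into the Jensen lower-order term so that only $g^{\delta}(M)$ survives. Optimizing the choice of $(p_{1}, p_{2})$ and the Jensen loss should yield the specific exponent $\delta = \frac{n-2}{n} +$.
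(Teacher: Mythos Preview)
Your strategy is sound in outline but differs from the paper's route in two organizational ways, and one step needs correction.

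\textbf{Comparison with the paper.} The paper never takes a fractional derivative of $N(u)$. Instead it applies the fundamental theorem of calculus directly to the difference $N(u)-N(u_h)$, obtaining a pointwise bound of the form $|N(u)-N(u_h)|\lesssim |u-u_h|\,\bigl(|u|^{4/(n-2)}+|u_h|^{4/(n-2)}\bigr)\,g(\cdot)$. It then places $u-u_h$ in $L^{n/(n-2)}$ (for $n\in\{3,4\}$) or $L^2$ (for $n=5$), where the bound $\|u_h-u\|_{L^{n/(n-2)}}\lesssim|h|^{(n-2)/2}$ follows by interpolating $\|u_h-u\|_{L^2}\lesssim|h|$ and $\|u_h-u\|_{L^{1_2^*}}\lesssim 1$, both consequences of the energy alone. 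The remaining factor $|u|^{4/(n-2)}g(|u|)$ is placed in $L^{n/2}$, where the Jensen inequality (case $q=\infty$) is optimized to give $g^{(n-2)/n+}(M)$. This yields an $L^1_x\to L^\infty_x$ (resp.\ $L^{10/9}_x\to L^{10}_x$) dispersive bound, which is then interpolated with the trivial estimate $\|v_{1,h}-v_1\|_{L^\infty_t L^{1_2^*}_x}\lesssim 1$ to reach $L^{\bar R}_x$. Your route---converting $\|f_h-f\|$ to $|h|^\alpha\|D^\alpha f\|$ on the nonlinearity, applying a fractional Leibniz rule to $D^\alpha N(u)$, and using dispersion $L^{\bar R'}_x\to L^{\bar R}_x$ directly---reaches the same endpoint but requires the fractional chain rule on the composite nonlinearity, which the paper avoids entirely. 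The paper's argument is thus more elementary; yours is more systematic.

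\textbf{The gap.} Your plan to control $\|D^\alpha u\|_{L^{p_2}_x}$ by interpolating between $\|Du\|_{L^2}\lesssim 1$ and $\|D^{\bar k}u\|_{L^2}\lesssim M$, hoping the resulting power of $M$ is ``absorbed into the Jensen lower-order term,'' does not work as stated: any positive power of $M$ cannot be dominated by $g^\delta(M)$, which grows only like an iterated logarithm. The fix is straightforward---since $\alpha<1$ in every dimension, Sobolev embedding from $\dot H^1$ alone suffices: for $p_2$ on the $\dot H^1$-scaling line (e.g.\ $p_2=3+$ when $n=3$, $\alpha=\tfrac12-$) one has $\|D^\alpha u\|_{L^{p_2}}\lesssim\|Du\|_{L^2}\lesssim 1$ with no $M$-dependence. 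With this correction your argument goes through, and the optimization of the factor $\||u|^{4/(n-2)}g(|u|)\|_{L^{p_1}}$ is then identical to the paper's (split via H\"older into an energy-controlled piece and a Jensen-controlled piece, optimized by sending the splitting parameter $\theta\to\tfrac{4}{n-2}-$).
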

Denote by $v_{1,h}^{av}(x) := \int \chi(y) v_{1}(x + |h|y) \; dy $ with  $\chi$ a bump function with total mass equal to one and such that
$\supp (\chi) \subset B(0,1)$. Then

\begin{equation}
\begin{array}{ll}
\| v_{1,h}^{av} - v_{1} \|_{L_{t}^{\bar{Q}} L_{x}^{\bar{R}} ([ t_{*}|J_{l}|,|J_{l}|])  } & \lesssim |J_{l}|^{\frac{1}{_{\bar{Q}}}}
\| v_{1,h}^{av} - v_{1} \|_{L_{t}^{\infty} L_{x}^{\bar{R}} ([ t_{*}|J_{l}|,|J_{l}|]) } \\
& \lesssim |h|^{\alpha} | J_{l}|^{\beta + \frac{1}{_{\bar{Q}}}} g^{\delta}(M) \cdot
\end{array}
\end{equation}
Therefore if $h$ satisfies $ |h| := c_{3} \eta_{1}^{\frac{1}{_{\bar{Q} \alpha}}} |J_{l}|^{- \left( \beta + \frac{1}{_{\bar{Q}}} \right) \frac{1}{\alpha}}
g^{- \frac{\delta}{\alpha}}(M)$
with $ 0 < c_{3} \ll 1 $ then

\begin{equation}
\begin{array}{ll}
\| v_{1,h}^{av} \|^{\bar{Q}}_{L_{t}^{\bar{Q}} L_{x}^{\bar{R}} ([ t_{*}|J_{l}|,|J_{l}|]) } & \gtrsim \eta_{1} \cdot
\end{array}
\label{Eqn:Ineqv1hBd}
\end{equation}
Now notice that by the Duhamel formula $v_{1}(t) = u_{l,(t_{*} - \eta_{3})|J_{l}|}(t) - u_{l,t_1} (t)  $ and therefore  $\| v_{1} \|_{L_{t}^{\infty} L_{x}^{1_{2}^{*}} ([t^{*} |J_{l}|, |J_{l}|])}
\lesssim  1 $. From that we get $ \| v_{1,h}^{av} \|_{L_{t}^{\frac{\bar{Q} 1_{2}^{*}}{\bar{R}}} L_{x}^{1_{2}^{*}} ([t^{*} |J_{l}|, |J_{l}|]) } \lesssim
|J_{l}|^{\frac{\bar{R}}{_{\bar{Q} 1_{2}^{*}}}}$ and, by interpolation,

\begin{equation}
\begin{array}{ll}
\| v_{1,h}^{av} \|_{L_{t}^{\bar{Q}} L_{x}^{\bar{R}}([t^{*} |J_{l}|, |J_{l}|]) } & \lesssim \| v_{1,h}^{av}
\|^{1 - \frac{1_{2}^{*}}{_{\bar{R}}}}_{L_{t}^{\infty} L_{x}^{\infty}([t^{*} |J_{l}|, |J_{l}|]) }
\| v_{1,h}^{av} \|^{\frac{1_{2}^{*}}{_{\bar{R}}}}_{L_{t}^{\frac{\bar{Q} 1_{2}^{*}}{_{\bar{R}}}} L_{x}^{1_{2}^{*}} ([t^{*} |J_{l}|, |J_{l}|]) } \cdot
\end{array}
\end{equation}
Hence, in view of (\ref{Eqn:Ineqv1hBd})

\begin{equation}
\begin{array}{ll}
\| v_{1,h}^{av} \|_{L_{t}^{\infty} L_{x}^{\infty} ([ t_{*}|J_{l}|,|J_{l}|])} & \gtrsim
\left( \frac{\eta_1^{\frac{1}{_{\bar{Q}}}}}{|J_l|^{\frac{1}{_{\bar{Q}}} }} \right)^{\frac{1}{1 - \frac{1_{2}^{*}}{_{\bar{R}}}}}
\end{array}
\label{Eqn:LowerBoundLtinfLxinf}
\end{equation}
Writing $ \textrm{Mass}(v_1(t),B(x,r))  = r^{\frac{n}{2}}  \left( \int_{|y| \leq 1} | v_1(t,x + r y)    |^{2} \, dy \right)^{\frac{1}{2}} $ we deduce from
Cauchy Schwartz and (\ref{Eqn:LowerBoundLtinfLxinf}) that there exists  $\check{t}_l \in [t_{*}|J_{l}|, |J_{l}| ]$ and $x_{l} \in \mathbb{R}^{n}$ such
that

\begin{equation}
\begin{array}{ll}
\textrm{Mass} \left( v_{1}(\check{t}_{l}),B(x_{l},|h|) \right) & \gtrsim
\left( \frac{\eta_1^{\frac{1}{_{\bar{Q}}}}}{|J_l|^{\frac{1}{_{\bar{Q}}} }} \right)^{\frac{1}{1 - \frac{1_{2}^{*}}{_{\bar{R}}}}}
|h|^{\frac{n}{2}} \cdot
\end{array}
\end{equation}
Therefore, by (\ref{Eqn:UpBdDerivM}) we see that if $R = C_{3} |J_{l}|
\left( \frac{\eta_1^{\frac{1}{_{\bar{Q}}}}}{|J_l|^{\frac{1}{_{\bar{Q}}} }} \right)^{- \frac{1}{1 - \frac{1_{2}^{*}}{_{\bar{R}}}}}
|h|^{-\frac{n}{2}} $ with $C_3 \gg 1$ then

\begin{equation}
\begin{array}{ll}
\textrm{Mass} \left( v_{1}( (t_{*} - \eta_{3}) |J_{l}| ), B(x_{l}, R)   \right) & \gtrsim
\left( \frac{\eta_1^{\frac{1}{_{\bar{Q}}}}}{|J_l|^{\frac{1}{_{\bar{Q}}} }} \right)^{\frac{1}{1 - \frac{1_{2}^{*}}{_{\bar{R}}}}}
|h|^{\frac{n}{2}} \cdot
\end{array}
\end{equation}
Notice that $ u \left( (t_{*} - \eta_{3})|J_{l}|  \right)= u_{l,t_{1}} \left( (t_{*} - \eta_{3})|J_{l}|  \right) - i v_{1} \left( (t_{*} -
\eta_{3}) |J_{l}|  \right) $. By H\"older inequality, (\ref{Eqn:Dfneta2}), and (\ref{Eqn:Ptwiseult1})

\begin{equation}
\begin{array}{ll}
\textrm{Mass} \left( u_{l,t_{1}} ( (t_{*} - \eta_{3}) |J_{l}|), B(x_{l},R) \right) & \lesssim
R^{n \left( \frac{1}{_{2}} - \frac{1}{_{\bar{R}}} \right)} \left( \frac{\eta_{2}}{|J_l|} \right)^{\frac{1}{_{\bar{Q}}}} \cdot \\
& \\
& \ll \left( \frac{\eta_1^{\frac{1}{_{\bar{Q}}}}}{|J_l|^{\frac{1}{_{\bar{Q}}} }} \right)^{\frac{1}{1 - \frac{1_{2}^{*}}{_{\bar{R}}}}}
|h|^{\frac{n}{2}} \cdot
\end{array}
\end{equation}
Therefore $ \textrm{Mass} \left( u ((t_{*} - \eta_{3}) |J_{l}|) ,B(x_{l},R) \right) \approx  \textrm{Mass} \left( v_{1}( (t_{*} - \eta_{3}) |J_{l}| ), B(x_{l}, R)
\right) $. Applying again (\ref{Eqn:UpBdDerivM}) we get

\begin{equation}
\begin{array}{ll}
\textrm{Mass} \left( u (t), B(x_{l},R) \right) & \gtrsim \left( \frac{\eta_1^{\frac{1}{_{\bar{Q}}}}}{|J_l|^{\frac{1}{_{\bar{Q}}} }} \right)^{\frac{1}{1 - \frac{1_{2}^{*}}{_{\bar{R}}}}}
|h|^{\frac{n}{2}}
\end{array}
\end{equation}
for $t \in J_{l}$. Putting everything together we get (\ref{Eqn:ConcentrationMassn3}).

\end{proof}

Next we use the radial symmetry to prove that, in fact, there is a mass concentration around the origin. \\

\fbox{ \textbf{Step 5} } \\

\begin{res}
There exists a positive constant $c_4 \ll 1$ and a
constant $C_4 \gg 1$  such that on each unexceptional interval $J_{l}$ we have

\begin{equation}
\begin{array}{ll}
\textrm{Mass} \left( u(t),  B(0, C_4 g^{\gamma_4^{a}}(M) |J_{l}|^{\frac{1}{2}}  )  \right) & \geq c_4
g^{-\gamma_4^{b}}(M) |J_{l}|^{\frac{1}{2}},
\end{array}
\label{Eqn:ConcMassZero}
\end{equation}
with $ \left( \gamma_4^{a}, \gamma_4^{b} \right) := \left( \gamma_3 ( 2 \times 1_2^{*}+1), \gamma_{3} \right)$.

\label{res:ConcMassZero}
\end{res}

\begin{proof}

Let $A:= C_4 g^{\gamma_3 (2 \times 1_2^{*}+1)}(M)$ for $C_4 \gg C_3$ large enough so that all the statements below are true. There are (a priori) two options:

\begin{itemize}

\item $ |x_{l}| \geq \frac{A}{2} |J_{l}|^{\frac{1}{2}} $ . Then there are at least $ \frac{A}{ 100 C_3 g^{\gamma_3}
(M) } $ rotations of the ball $ B(x_{l}, C_3 g^{\gamma_3} (M) |J_{l}|^{\frac{1}{2}} ) $ that are disjoint. Now,
since the solution is radial, the mass on each of these balls $B_{j}$ is equal to that of the ball
$ B(x_{l}, C_3  g^{\gamma_3} (M) |J_{l}|^{\frac{1}{2}}) $. But then by H\"older inequality we have

\begin{equation}
\begin{array}{ll}
\| u(t)\|^{1_{2}^{*}}_{L^{2}(B_{j})} & \leq \| u(t) \|^{1_{2}^{*}}_{L^{1_{2}^{*}}(B_{j})} \left( C_3 g^{\gamma_3} (M) |J_{l}|^{\frac{1}{2}} \right)^{1_{2}^{*}}
\end{array}
\end{equation}
and summing over $j$ we see from the estimate $\| u(t) \|^{1_{2}^{*}}_{L^{1_{2}^{*}}} \lesssim 1 $ that

\begin{equation}
\begin{array}{l}
\frac{A}{ 100 C_3 g^{\gamma_3} (M) } \left( c_3 g^{-\gamma_3}(M)
|J_{l}|^{\frac{1}{2}} \right)^{1_{2}{*}}  \\
\lesssim   \left( C_3 g^{\gamma_3} (M) |J_{l}|^{\frac{1}{2}} \right)^{1_{2}^{*}}
\end{array}
\end{equation}
must be true. But with the value of $A$ chosen above we see that this inequality cannot be satisfied. Therefore
this scenario is impossible.

\item $|x_{l}| \leq \frac{A}{2} |J_{l}|^{\frac{1}{2}}  $. Then by
(\ref{Eqn:ConcentrationMassn3}) and the triangle inequality, we see that (\ref{Eqn:ConcMassZero}) holds.
\end{itemize}

\end{proof}

\fbox{\textbf{Step 6}} \\

Combining the inequality (\ref{Eqn:ConcMassZero}) with the Morawetz-type inequality in Lemma \ref{lem:Morawest} we can prove that at least
one of the intervals $J_{l}$ is large. More precisely

\begin{res}
There exists a positive constant $c_5 \ll 1$ and $\tilde{l} \in [ 1,..,L ]$ such that

\begin{equation}
\begin{array}{ll}
|J_{\tilde{l}}|  &  \geq c_5 g^{- \gamma_5}(M) |J|,
\end{array}
\label{Eqn:DistribInterv}
\end{equation}
with $\gamma_5 :=  2 \left( \gamma_4^{a} \frac{3n-2}{n-2} + \gamma_{4}^{b} 1_{2}^{*} \right) $.

\label{res:DistribInterv}
\end{res}

\begin{proof}

There are two options:

\begin{itemize}

\item $J_{l}$ is unexceptional. Let $R:=  C_4 g^{\gamma_{4}^{a}}(M)  |J_{l}|^{\frac{1}{2}} $. By H\"older inequality
(w.r.t space) and by integration in time we have

\begin{equation}
\begin{array}{ll}
\int_{J_{l}} \int_{B(0,R)}  \frac{|u(t,x)|^{1_{2}^{*}}}{|x|} \, dx dt & \geq  |J_{l}| \; \inf_{t \in J_l}  \textrm{Mass}^{1_{2}^{*}} \left( u(t), B(0,R) \right)
R^{\frac{2-3n}{n-2}} \cdot
\end{array}
\end{equation}
After summation over $l$ we see, by (\ref{Eqn:ConcMassZero})  and  (\ref{Eqn:MorawEst}) that

\begin{equation}
\begin{array}{l}
\sum\limits_{l=1}^{L} |J_{l}|  \left(  g^{- \gamma_{4}^{b}} (M)   |J_{l}|^{\frac{1}{2}} \right)^{1_{2}^{*}} \left( C_4
g^{\gamma_{4}^{a}} (M) |J_{l}|^{\frac{1}{2}} \right)^{\frac{2-3n}{n-2}} \\
\lesssim  |J|^{\frac{1}{2}},
\end{array}
\end{equation}
and after rearranging, we see that

\begin{equation}
\begin{array}{ll}
\sum \limits_{l=1}^{L} |J_{l}|^{\frac{1}{2}}
g^{- \left( \gamma_{4}^{b} 1_{2}^{*} + \gamma_{4}^{a} \frac{3n-2}{n-2} \right)}(M)   & \lesssim  |J|^{\frac{1}{2}} \cdot
\end{array}
\end{equation}

\item $J_{l}$ is exceptional. In this case by (\ref{Eqn:BoundCardExcep}) and

\begin{equation}
\begin{array}{ll}
\sum \limits_{l=1}^{L} |J_{l}|^{\frac{1}{2}} & \lesssim  \eta_{2}^{-1} \sup_{ 1 \leq
l \leq L} |J_{l}|^{\frac{1}{2}} \\
& \lesssim  \eta_{2}^{-1} |J|^{\frac{1}{2}} \cdot
\end{array}
\end{equation}

\end{itemize}

Therefore, writing $\sum\limits_{l=1}^{L} |J_{l}|^{\frac{1}{2}} \geq \frac{|J|}{\sup_{1 \leq l \leq L} |J_{l}|^{\frac{1}{2}} } $, we conclude that
there exists a positive constant $c_5 \ll 1$ and $\tilde{l} \in [1,..,L ]$ such that (\ref{Eqn:DistribInterv}) holds.

\end{proof}

\fbox{\textbf{Step 7}} \\

We use a crucial algorithm due to Bourgain \cite{bourg} to prove that there are many of those intervals that concentrate.

\begin{res}
Let $\eta := c_5 g^{-\gamma_5}(M) $. There exist a time $\bar{t}$, $K > 0$ and intervals $J_{l_{1}}$, ...., $J_{l_{K}}$ such that

\begin{equation}
\begin{array}{ll}
|J_{l_{1}}| \geq  2 |J_{l_{2}}| ... \geq 2^{k-1} |J_{l_{k}}| ... \geq 2^{K-1} |J_{l_{K}}|,
\end{array}
\end{equation}

\begin{equation}
\begin{array}{ll}
dist(\bar{t}, J_{l_{k}}  ) \leq \eta^{-1} |J_{l_k}|,
\end{array}
\label{Eqn:ControlDist}
\end{equation}
and

\begin{equation}
\begin{array}{ll}
K & \geq - \frac{\log{(L)}}{2 \log { \left(  \frac{\eta}{8} \right)}  } \cdot
\end{array}
\label{Eqn:LowerBoundK}
\end{equation}

\end{res}
A proof of this result in such a state can be found in \cite{triroyschrod} (see also \cite{taorad} from which the proof is inspired).
\\

\fbox{\textbf{Step 8}} \\

We prove that $L< \infty$, by using Step $7$.  More precisely

\begin{res}
There exists a constant  $C_{7} \gg 1$ such that

\begin{equation}
\begin{array}{l}
L \leq C_{7}^{ C_{7}g^{\gamma_7+} (M)},
\end{array}
\label{Eqn:BoundLFin}
\end{equation}
with $\gamma_7 := (2 \gamma_3 + \gamma_5) 1_{2}^{*} $.
\end{res}

\begin{proof}
Let $R_{l_k} := C g^{\gamma_3 + \gamma_5}(M) |J_{l_k}|^{\frac{1}{2}}$ with $C \gg 1$ a constant large enough such that all the statements below
are true. By Result \ref{res:ConcentrationMass} we have

\begin{equation}
\begin{array}{ll}
\textrm{Mass} \left( u(t),  B(x_{l_{k}},R_{l_k}) \right) & \geq c_3 g^{- \gamma_3} (M) |J_{l_k}|^{\frac{1}{2}}
\end{array}
\label{Eqn:LowerBondMass2}
\end{equation}
for all $t \in J_{l_{k}}$. By (\ref{Eqn:UpBdDerivM}) and (\ref{Eqn:ControlDist}) we see that (\ref{Eqn:LowerBondMass2}) holds for $t=\bar{t}$ with
$c_3$ replaced with $\frac{c_3}{2}$. On the other hand we see by (\ref{Eqn:MassControl}) that $($ Notation:
$\sum\limits_{k^{'}=k+N}^{K} a_{k^{'}}=0$, if $k+N > K$.  $)$

\begin{equation}
\begin{array}{ll}
\sum\limits_{k^{'}=k+N}^{K} \int_{B(x_{l_{k^{'}}} ,R_{l_k'} )} |u(\bar{t},x)|^{2} \, dx & \lesssim  \left( \frac{1}{2^{N}} + \frac{1}{2^{N+1}}.... +
\frac{1}{2^{K-k}} \right)  R_{l_k}^{2} \\
& \lesssim \frac{1}{2^{N-1}}  R_{l_k}^{2}
\end{array}
\end{equation}
Now we let  $N = C^{'} \log{(g(M))} $ with $C^{'} \gg  1 $ large enough  so that $ \frac{R_{l_k}^{2}}{2^{N-1}} \ll  c_3^{2}
g^{- 2 \gamma_3} (M) |J_{l_k}| $. By (\ref{Eqn:LowerBondMass2}) we have

\begin{equation}
\begin{array}{ll}
\sum\limits_{k^{'}=k+N}^{K} \int_{B(x_{_{l_{k^{'}}}} ,R_{l_k'} )} |u(\bar{t},x)|^{2} \, dx & \leq
\frac{1}{2}  \int_{B(x_{l_{k}},R_{l_k})} |u(\bar{t},x)|^{2} \, dx
\end{array}
\end{equation}
Therefore

\begin{equation}
\begin{array}{ll}
\int_{_{B(x_{l_{k}}, R_{l_k}) / \bigcup_{k^{'}=k+N}^{K} B (x_{l_{k^{'}}} ,R_{l_{k'}})  } } \   |u(\bar{t},x)|^{2} \, dx & \geq \frac{1}{2}  \int_{B(x_{l_{k}},R_{l_k})} |u(\bar{t},x)|^{2} \, dx \\
& \geq \frac{c_3^{2} g^{-2 \gamma_3} (M)}{4} |J_{l_k}|
\end{array}
\end{equation}
and by H\"older inequality, there exists a positive constant $\ll 1$ (that we still denote by $c_3$) such that

\begin{equation}
\begin{array}{ll}
\int_{_{B(x_{l_{k}}, R_{l_k}) / \bigcup_{k^{'}=k+N}^{K} B (x_{l_{k^{'}}} ,R_{l_{k'}})  } } \   |u(\bar{t},x)|^{1_{2}^{*}} \, dx & \geq c_3
g^{-  (2 \gamma_3 + \gamma_5)1_{2}^{*}} (M)
\end{array}
\end{equation}
and after summation over $k$, we have

\begin{equation}
\begin{array}{l}
\frac{K}{N} c_3 g^{- \left( 2 \gamma_3 + \gamma_5 \right) 1_{2}^{*}} (M) \lesssim 1,
\end{array}
\end{equation}
since $\sum\limits_{k=1}^{K} \chi_{ _{ B(x_{l_{k}}, R_{l_k}) / \cup_{k^{'}=k+N}^{K} B (x_{_{l_{k^{'}}}} ,R_{l_{k'}})}} \leq N$ and $\| u(t)
\|^{1_{2}^{*}}_{L^{1_{2}^{*}}} \lesssim  1$. Rearranging we see from (\ref{Eqn:LowerBoundK}) that there exists a constant $C_{7} \gg 1$  such that

\begin{equation}
\begin{array}{ll}
L & \leq  C_{7}^{ C_{7} g^{\gamma_7}(M)} \cdot
\end{array}
\label{Eqn:BoundL2}
\end{equation}
We see that (\ref{Eqn:BoundLFin}) holds. \\

\fbox{\textbf{Step 9}} \\

This is the final step. Recall that there are $L$ intervals $J_{l}$ and that on each of these intervals we have $\| u
\|_{L_{t}^{\bar{Q}} L_{x}^{\bar{R}} (J) }^{\bar{Q}} = \eta_{1}$. Therefore, there is a constant $ C_1 \gg 1$
such that (\ref{Eqn:BoundLong}) holds.

\end{proof}

\subsection{Proof of Lemma \ref{lem:regv}}
\label{Subsec:Lemmaregv}

In this subsection we prove Lemma \ref{lem:regv}. \\
\\
We write down some estimates.\\
\\
We have

\begin{equation}
\begin{array}{ll}
\| v_{1,h} - v_{1} \|_{L_{t}^{\infty} L_{x}^{1_{2}^{*}} ([t_{*} |J_{l}|, |J_{l}|])} & = \|  u_{l, (t_{*} - \eta_{3}) |J_{l}|,h} - u_{l,t_{1},h} -
(u_{l,(t_{*} - \eta_{3})|J_{l}|} - u_{l,t_{1}}  ) \|_{L_{t}^{\infty} L_{x}^{1_{2}^{*}}  ([t_{*} |J_{l}|, |J_{l}|])} \\
& \lesssim 1,
\end{array}
\label{Eqn:SobolevV1h}
\end{equation}
By the fundamental theorem of calculus (and the inequality $ \| D u \|_{L_{t}^{\infty} L_{x}^{2} ([ t_{*} |J_{l}|, |J_{l}| ] )} \lesssim
1$ ) we have

\begin{equation}
\begin{array}{ll}
\| u_{h} - u \|_{L_{t}^{\infty} L_{x}^{2} ([ t_{*} |J_{l}|, |J_{l}| ] )} &  \lesssim |h|
\end{array}
\label{Eqn:Diffu1}
\end{equation}
Moreover, by (\ref{Eqn:SobolevIneq1}) we have

\begin{equation}
\begin{array}{ll}
\| u_{h} -u \|_{ L_{t}^{\infty} L_{x}^{1_{2}^{*}} ([ t_{*} |J_{l}|, |J_{l}| ] )} \lesssim 1 \cdot
\end{array}
\label{Eqn:Diffu2}
\end{equation}
Let $\theta \in \left( 0, \frac{4}{n-2} \right)$. \\

There are two cases:

\begin{itemize}

\item $n \in \{ 3,4 \}$

Interpolating between (\ref{Eqn:Diffu1}) and (\ref{Eqn:Diffu2}) we get

\begin{equation}
\begin{array}{l}
\| u_{h}(s) - u(s) \|_{L^{\frac{n}{n-2}}}  \lesssim |h|^{\frac{n-2}{2}} \cdot
\end{array}
\nonumber
\end{equation}
The fundamental theorem of calculus, elementary estimates of $g$ and its derivatives,
(\ref{Eqn:EnergyBarely}), (\ref{Eqn:EquivF}), Proposition \ref{Prop:NonlinFracSmooth} combined with (\ref{Eqn:Sobklarge}), and Proposition \ref{Prop:Jensen}
$($ Observe from (\ref{Eqn:Interpkbar}) that Proposition \ref{Prop:Jensen} is applicable $)$ yield

\begin{equation}
\begin{array}{l}
\|  |u(s)|^{\frac{4}{n-2}} u(s) g(|u(s)|) - |u_{h}(s)|^{\frac{4}{n-2}} u_{h}(s) g(|u_{h}(s)|) \|_{L^{1}} \\
 \lesssim \| u_{h}(s)- u(s) \|_{L^{\frac{n}{n-2}}} \| u^{\frac{4}{n-2}}(s) g(|u(s)|) \|_{L^{\frac{n}{2}}} \\
\lesssim  \| u_{h}(s)- u(s) \|_{L^{\frac{n}{n-2}}} \left\|  u(s)^{\theta}  g^{\frac{\theta}{1_{2}^{*}}}(|u(s)|) \right\|_{L^{\frac{1_{2}^{*}}{\theta}}}
\left\| u(s)^{ \frac{4}{n-2} -\theta} g^{1 - \frac{\theta}{1_{2}^{*}}}(|u(s)|) \right\|_{L^{\frac{1_{2}^{*}}{\frac{4}{n-2}- \theta}}} \\
\lesssim |h|^{\frac{n-2}{2}} \left\| u(s) g^{\frac{1- \frac{\theta}{1_{2}^{*}}}{\frac{4}{n-2} - \theta }} (|u(s)|) \right\|^{\frac{4}{n-2} - \theta}_{L^{1_{2}^{*}}} \\
\lesssim g^{\frac{n-2}{n}+}(M) |h|^{\frac{n-2}{2}},
\end{array}
\nonumber
\end{equation}
by letting $\theta = \frac{4}{n-2}-$ at the last line. Hence by the dispersive inequality (\ref{Eqn:DispIneq}) we get

\begin{equation}
\begin{array}{ll}
\| v_{1,h}  - v_{1} \|_{L_{t}^{\infty} L_{x}^{\infty} ([t_{*}|J_{l}|, |J_{l}| ])} & \lesssim \eta_{3}^{\frac{2-n}{2}} |J_{l}|^{\frac{2-n}{2}}
g^{\frac{n-2}{n}+}(M) |h|^{\frac{n-2}{2}} \cdot
\end{array}
\nonumber
\end{equation}
Interpolating this inequality with (\ref{Eqn:SobolevV1h}) we get (\ref{Eqn:Regv}).\\

\item $n=5$

From (\ref{Eqn:Diffu1}) we get

\begin{equation}
\begin{array}{l}
\|  |u(s)|^{\frac{4}{3}} u(s) g(|u(s)|) - |u_{h}(s)|^{\frac{4}{3}} u_{h}(s) g(|u_{h}(s)|) \|_{L^{\frac{10}{9}}} \\
 \lesssim \| u_{h}(s)- u(s) \|_{L^{2}} \| u^{\frac{4}{3}}(s) g(|u(s)|) \|_{L^{\frac{5}{2}}} \\
\lesssim  \| u_{h}(s)- u(s) \|_{L^{2}}
\left\|  u(s)^{\theta}  g^{\frac{\theta}{1_{2}^{*}}}(|u(s)|) \right\|_{L^{\frac{1_{2}^{*}}{\theta}}}
\left\| u(s)^{ \frac{4}{3} -\theta} g^{1 - \frac{\theta}{1_{2}^{*}}}(|u(s)|) \right\|_{L^{\frac{1_{2}^{*}}{\frac{4}{3}- \theta}}} \\
\lesssim \left\| u g^{\frac{1- \frac{\theta}{1_{2}^{*}}}{\frac{4}{3} - \theta }} (|u|) \right\|^{\frac{4}{3} - \theta}_{L^{1_{2}^{*}}} \\
\lesssim g^{\frac{3}{5}+}(M) |h|,
\end{array}
\nonumber
\end{equation}
by letting $\theta = \frac{4}{n-2}-$ at the last line. Hence we get

\begin{equation}
\begin{array}{l}
\| v_{1,h}  - v_{1} \|_{L_{t}^{\infty} L_{x}^{10} ([t_{*}|J_{l}|, |J_{l}| ])} \lesssim
\eta_{3}^{-1} |J_l|^{-1} g^{\frac{3}{5}+}(M) |h| \cdot
\end{array}
\nonumber
\end{equation}
Interpolating this inequality with (\ref{Eqn:SobolevV1h}) we get (\ref{Eqn:Regv}).

\end{itemize}

\section{APPENDIX}
\label{Sec:Appendix}

In this appendix we prove Proposition \ref{Prop:LocalWell} by using a fixed point argument. \\
\\
First we prove an estimate in homogeneous Besov spaces (see e.g \cite{bahchem}) that will be used when we deal with the case $n=5$. Then we collect some estimates. Finally we write down the proof of Proposition \ref{Prop:LocalWell}. \\
\\
If $1 < k < 2$ let $\alpha \in (0,1)$ be such that $k= 1 + \alpha$. Let $\theta \in [0,1]$ and $C$ be two positive constants that are allowed to change from one line to the other one and such that all the estimates below are true. In addition $C$ is also allowed to change within the same line. The reader is urged to plot all the points
$ \left( \frac{1}{q}, \frac{1}{r} \right)$ wherever $L_{t}^{q} L_{x}^{r}$ appears on the coordinate plane $Oxy$ with $Ox$ (resp. $Oy$) representing the $x-$axis (resp. the $y-$ axis ).

\subsection{An estimate in homogeneous Besov spaces}

We prove the following lemma:

\begin{lem}
Assume that $ 0 < \alpha < 1$. Let $r$ and $\beta$ be such that $\alpha < \beta < 1$ and $r \beta \geq 1$. Let $H: \mathbb{R}^{2} \rightarrow \mathbb{R}^{2}$ be a  H\"older continuous function
with exponent $\beta$ which is $C^{1}$ (except at the origin) and which satisfies
$|H(f,\bar{f})| \approx |f|^{\beta}$ and $|H^{'}(f,\bar{f})| \approx |f|^{\beta -1}$. Let $  1 - \beta \gg \epsilon > 0$. Then

\begin{equation}
\begin{array}{ll}
\left\| H(f,\bar{f}) g(|f|) \right\|_{\dot{B}^{\alpha}_{r,r}} & \lesssim \| f \|^{\beta}_{\dot{B}^{\frac{\alpha}{\beta}}_{\beta r,\beta r}} +
\| f \|^{\beta + \epsilon}_{\dot{B}^{\frac{\alpha}{\beta +\epsilon}}_{(\beta + \epsilon)r, (\beta + \epsilon)r}}
\end{array}
\label{Eqn:BesovEst}
\end{equation}
\label{Lem:BesovEst}
\end{lem}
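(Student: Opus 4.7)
The plan is to use the difference-based characterization of $\dot{B}^{\alpha}_{r,r}$, valid for $0<\alpha<1$:
\[
\|F\|_{\dot{B}^{\alpha}_{r,r}}^{r} \approx \iint \frac{|F(x+h)-F(x)|^{r}}{|h|^{n+\alpha r}}\,dx\,dh,
\]
applied with $F=H(f,\bar{f})\,g(|f|)$. The first step is to establish the pointwise bound
\[
|F(x+h)-F(x)| \lesssim |f(x+h)-f(x)|^{\beta}\bigl(g(|f(x+h)|)+g(|f(x)|)\bigr)
\]
by splitting into two regimes. When $|f(x+h)-f(x)|$ dominates $\max(|f(x+h)|,|f(x)|)$, the triangle inequality together with $|H(z)|\approx|z|^{\beta}$ gives the bound immediately; when instead $|f(x+h)-f(x)|$ is small compared to both values, the mean value theorem combined with $|H'(z)|\approx|z|^{\beta-1}$ and a negligible logarithmic-derivative estimate on $g$ produces the claimed Hölder-type modulus.

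Next I would absorb the $g$ factor using its very slow growth: for the fixed $\epsilon>0$ in the statement one has $g(t)^{r}\lesssim_{\epsilon}1+t^{\epsilon r}$. Raising the pointwise bound to the $r$-th power and using $(a+b)^{r}\lesssim a^{r}+b^{r}$ yields
\[
|F(x+h)-F(x)|^{r} \lesssim |f(x+h)-f(x)|^{\beta r}\bigl(1+|f(x)|^{\epsilon r}+|f(x+h)|^{\epsilon r}\bigr).
\]
The contribution of the $1$ integrates against $|h|^{-n-\alpha r}$ to give exactly $\|f\|_{\dot{B}^{\alpha/\beta}_{\beta r,\beta r}}^{\beta r}$, the first term on the right-hand side of the claimed inequality. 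For the remaining pieces I would apply the elementary bound $|f(x)|^{\epsilon r}\lesssim|f(x+h)-f(x)|^{\epsilon r}+|f(x+h)|^{\epsilon r}$ (valid since $(a+b)^{p}\lesssim a^{p}+b^{p}$ for any $p>0$); the diagonal piece $|f(x+h)-f(x)|^{(\beta+\epsilon)r}$ integrates directly to $\|f\|_{\dot{B}^{\alpha/(\beta+\epsilon)}_{(\beta+\epsilon)r,(\beta+\epsilon)r}}^{(\beta+\epsilon)r}$, matching the second target term.

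The remaining cross term, of the form $\iint|h|^{-n-\alpha r}\,|f(x+h)-f(x)|^{\beta r}|f(x+h)|^{\epsilon r}\,dx\,dh$, is the technical heart and my expected main obstacle: the factor $|f(x+h)|^{\epsilon r}$ carries no intrinsic $h$-dependence, so the weight $|h|^{-n-\alpha r}$ cannot be separated off naively. I would control it by Hölder in $x$ with conjugate exponents $(\beta+\epsilon)/\beta$ and $(\beta+\epsilon)/\epsilon$, and then invoke a Besov embedding at matching scaling to trade the resulting $L^{(\beta+\epsilon)r}$ norm of $f$ for the Besov norm appearing on the right-hand side, using the hypothesis $r\beta\ge 1$ (together with $\alpha<\beta<1$ and $\epsilon\ll 1-\beta$) in an essential way to make the exponents and the weight balance and close the estimate.
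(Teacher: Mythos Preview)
Your pointwise bound and the treatment of the ``diagonal'' contributions are fine, and the overall strategy (difference characterization of $\dot B^{\alpha}_{r,r}$) is the same as the paper's. The genuine gap is exactly where you flag it: the cross term
\[
\iint |h|^{-n-\alpha r}\,|f(x+h)-f(x)|^{\beta r}\,|f(x+h)|^{\epsilon r}\,dx\,dh
\]
is not handled by the mechanism you propose. After H\"older in $x$ with exponents $\frac{\beta+\epsilon}{\beta}$ and $\frac{\beta+\epsilon}{\epsilon}$ you are left with $\|f\|_{L^{(\beta+\epsilon)r}}^{\epsilon r}$ multiplying
\[
\int |h|^{-n-\alpha r}\,\|f(\cdot+h)-f\|_{L^{(\beta+\epsilon)r}}^{\beta r}\,dh,
\]
and there is no way to close this: a further H\"older in $h$ forces you to integrate a pure power $|h|^{-n-\alpha r}$ over $\mathbb{R}^n$, which diverges. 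Moreover the homogeneous Sobolev/Besov embedding goes the wrong way --- $\dot B^{\alpha/(\beta+\epsilon)}_{(\beta+\epsilon)r,(\beta+\epsilon)r}$ embeds into a Lebesgue space of \emph{higher} integrability than $L^{(\beta+\epsilon)r}$, so you cannot trade $\|f\|_{L^{(\beta+\epsilon)r}}$ for the Besov norm on the right-hand side. The hypothesis $r\beta\ge 1$ does not rescue this step.

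The fix, which is what the paper does, is to never produce the cross term in the first place. Instead of bounding $g(t)\lesssim 1+t^{\epsilon}$ after obtaining your pointwise estimate, one case-splits \emph{before} separating $g$. In the regions $|f(x)|\gg|f(x+h)|$ or $|f(x+h)|\gg|f(x)|$ one has $|f(x+h)-f(x)|\approx \max(|f(x)|,|f(x+h)|)$, so $|H(f)g(|f|)(x+h)-H(f)g(|f|)(x)|\lesssim |f|^{\beta}+|f|^{\beta+\epsilon}\lesssim |f(x+h)-f(x)|^{\beta}+|f(x+h)-f(x)|^{\beta+\epsilon}$ directly. In the region $|f(x)|\approx|f(x+h)|$ one keeps the mean-value bound $|f|^{\beta-1}g(|f|)\,|f(x+h)-f(x)|$; for $|f|\gg 1$ one uses $g(|f|)\lesssim |f|^{\epsilon}$ together with $|f(x+h)-f(x)|\lesssim |f|$ and $\beta+\epsilon<1$ to get $|f|^{\beta-1+\epsilon}|f(x+h)-f(x)|\lesssim |f(x+h)-f(x)|^{\beta+\epsilon}$, while for $|f|\lesssim 1$ the H\"older continuity of $H$ gives $|f(x+h)-f(x)|^{\beta}$ immediately. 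In every case the bound is a pure power of $|f(x+h)-f(x)|$, and integrating against $|h|^{-n-\alpha r}$ yields exactly the two Besov norms with no residual cross term.
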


\begin{proof}

Let $ 1 > \mu > 0$ . Recall that if $ 0 < s < 1$ and $p \geq 1$ then

\begin{equation}
\begin{array}{l}
\| f \|^{p}_{\dot{B}^{s}_{p,p}} \approx \int_{\mathbb{R}^{n}} \frac{\left\| f(x+ h) - f(x) \right\|^{p}_{L^{p}}}{|h|^{n+  sp}} \; dh \cdot
\end{array}
\nonumber
\end{equation}
Elementary considerations (such as the estimate $g(|f|) \lesssim 1 + |f|^{\epsilon}$)   show that

\begin{itemize}

\item[$$]

\item if $|f|(x) \gg |f|(x+h)$ then

\begin{equation}
\begin{array}{l}
\left| H(f,\bar{f})g(|f|)(x+h) - H(f,\bar{f}) g(|f|)(x) \right|  \lesssim  \left| H(f,\bar{f}) g(|f|)(x) \right| \lesssim |f|^{\beta}(x)
+ |f|^{\beta + \epsilon}(x)  \\
\left| f (x+h) - f(x) \right|^{\mu} \gtrsim  \left| |f|^{\mu} (x+h) - |f|^{\mu}(x) \right|  \gtrsim |f|^{\mu}(x)
\end{array}
\nonumber
\end{equation}

\item[$$]

\item if $|f|(x+ h) \gg |f|(x)$ then the same estimates as above hold, except that $x$ (resp. $x+h$ ) is replaced with
$x+h$ (resp. $x$)

\item[$$]

\item if $|f|(x) \approx |f|(x+h)$ then there are two cases. If $|f|(x) \gg 1$ then

\begin{equation}
\begin{array}{l}
\left| H(f,\bar{f})g(|f|)(x+h) - H(f,\bar{f}) g(|f|)(x) \right|  \lesssim  \left( |f|^{\beta - 1}(x) + |f|^{\beta - 1 + \epsilon}(x) \right)
\left| f(x+ h) - f(x) \right|  \\
\left| f (x+h) - f(x) \right|^{\mu} \gtrsim  \left| |f|^{\mu}(x+h) - |f|^{\mu} (x) \right|  \gtrsim  |f|^{\mu-1}(x)  \left| f(x+ h) - f(x) \right| \cdot
\end{array}
\nonumber
\end{equation}
If $|f|(x) \lesssim 1$ then

\begin{equation}
\begin{array}{l}
\left| H(f,\bar{f})g(|f|)(x+h) - H(f,\bar{f}) g(|f|)(x) \right|  \lesssim  \left| f(x+ h) - f(x) \right|^{\beta} \cdot
\end{array}
\nonumber
\end{equation}

\end{itemize}
Hence dividing the region of integration of $\left\| H(f,\bar{f}) g(|f|) \right\|^{r}_{\dot{B}^{\alpha}_{r,r}}$
into the regions above, we see from the above estimates that (\ref{Eqn:BesovEst}) holds.

\end{proof}

\begin{rem}
A straightforward modification of the proof of Lemma \ref{Lem:BesovEst} shows that (\ref{Eqn:BesovEst}) also holds if
$g(|f|)$ is replaced with $g^{'}(|f|) |f|$ or $g^{''}(|f|) |f|^{2}$.
\end{rem}

\subsection{Some estimates}
\label{Subsec:Est}
We write down some basic estimates. They will be used in Subsection \ref{Subsec:Proof}. \\
Let $(\bar{q},\bar{r})$ be a bipoint. We have

\begin{equation}
\begin{array}{l}
1 \leq m \leq k: \; \| D^{m} u \|_{L_{t}^{\bar{q}}
L_{x}^{\bar{r}}([0,T_l])} \lesssim \| D^{k} u \|_{L_{t}^{\bar{q}} L_{x}^{\bar{r}}([0,T_l])}^{\theta}
\| D  u \|^{1- \theta}_{L_{t}^{\bar{q}} L_{x}^{\bar{r}}([0,T_l])} \cdot
\end{array}
\nonumber
\end{equation}
Let  $(\bar{q},\bar{r})$ be a bipoint such that $\bar{q} \geq \frac{2(n+2)}{n}$ and
$\frac{1}{\bar{q}} + \frac{n}{2 \bar{r}} = \frac{n}{4}$. Then

\begin{equation}
\begin{array}{l}
\| D^{\bar{k}} u \|_{L_{t}^{\bar{q}}
L_{x}^{\bar{r}}([0,T_l])} \lesssim \| D^{k} u \|_{L_{t}^{\frac{2(n+2)}{n}} L_{x}^{\frac{2(n+2)}{n}}([0,T_l])}^{\theta}
\| D  u \|^{1- \theta}_{L_{t}^{\infty} L_{x}^{2}([0,T_l])}  \cdot
\end{array}
\nonumber
\end{equation}
We have

\begin{equation}
\begin{array}{l}
\| D^{\alpha} u \|_{L_{t}^{\frac{2(n+2)}{n-2}} L_{x}^{\frac{2(n+2)}{n-2}}([0,T_l])}  \lesssim
\| D^{1+ \alpha} u \|_{L_{t}^{\frac{2n(n+2)}{n-2}} L_{x}^{\frac{2(n+2)}{n^{2}+ 4}} ([0,T_l])}, \; \text{and} \\
m \in \{1, 1+ \alpha \}: \; \| D^{m} u \|_{L_{t}^{\frac{2(n+2)}{n-2}} L_{x}^{\frac{2n(n+2)}{n^{2}+ 4}} ([0,T_l])}
\lesssim \| D^{m} u \|^{\theta}_{L_{t}^{\frac{2(n+2)}{n}} L_{x}^{\frac{2(n+2)}{n}}([0,T_l]) }
\| D^{m} u \|^{1 -\theta}_{L_{t}^{\infty} L_{x}^{2} ([0,T_l])} \cdot
\end{array}
\nonumber
\end{equation}
Let $\epsilon' := \frac{8(\bar{k}-1)}{_{(n-2)(n-2 \bar{k})}}$ and $0 < \epsilon^{''} < \epsilon^{'}$. Let $r$ be such that
$\frac{1}{\frac{n+2}{2} \left( \frac{4}{n-2} + \epsilon' \right)} = \frac{1}{r} - \frac{\bar{k}}{n}$. Observe that
$ \left( \frac{n+2}{2} \left( \frac{4}{n-2} + \epsilon^{'} \right), r \right)$ is admissible. We have

\begin{equation}
\begin{array}{l}
\| u \|_{L_{t}^{\frac{n+2}{2} \left( \frac{4}{n-2} + \epsilon'' \right)}
L_{r}^{\frac{n+2}{2} \left( \frac{4}{n-2} + \epsilon'' \right)} ([0,T_l]) } \lesssim
\| u \|^{\theta}_{L_{t}^{\frac{2(n+2)}{n-2}} L_{x}^{\frac{2(n+2)}{n-2}} ([0,T_l])}
\| u \|^{1- \theta}_{L_{t}^{\frac{n+2}{2} \left( \frac{4}{n-2} + \epsilon' \right)}
L_{x}^{\frac{n+2}{2} \left( \frac{4}{n-2} + \epsilon' \right)} ([0,T_l]) }, \; \text{and} \\
\| u \|_{L_{t}^{\frac{n+2}{2} \left( \frac{4}{n-2} + \epsilon' \right)}
L_{x}^{\frac{n+2}{2} \left( \frac{4}{n-2} + \epsilon' \right)} ([0,T_l]) } \lesssim \| D^{\bar{k}} u \|_{L_{t}^{\frac{n+2}{2} \left( \frac{4}{n-2} + \epsilon' \right)} L_{x}^{r}([0,T_l])} \cdot
\end{array}
\nonumber
\end{equation}
Let $\bar{\epsilon}^{'} := \frac{2(\bar{k}-1)(6-n)}{_{(n-2)(n-2\bar{k})}}$ and $0 < \bar{\epsilon}^{''} < \bar{\epsilon}^{'}$. Let
$r$ be such that $\frac{1}{\frac{2(n+2)}{6-n} \left( \frac{6-n}{n-2} + \bar{\epsilon}^{'} \right)} = \frac{1}{r} - \frac{\bar{k}}{n}$.
Observe that $ \left( \frac{2(n+2)}{6-n} \left( \frac{6-n}{n-2} + \bar{\epsilon}^{'} \right), r \right) $ is admissible. We have

\begin{equation}
\begin{array}{l}
\| u \|_{L_{t}^{\frac{2(n+2)}{6-n} \left( \frac{6-n}{n-2} + \bar{\epsilon}^{''} \right)}
L_{x}^{\frac{2(n+2)}{6-n} \left( \frac{6-n}{n-2} + \bar{\epsilon}^{''} \right) }([0,T_l]) }  \lesssim
\| u \|^{\theta}_{L_{t}^{\frac{2(n+2)}{n-2}} L_{x}^{\frac{2(n+2)}{n-2}} ([0,T_l]) }  \\
\hspace{6.7cm} \| u \|^{1- \theta}_{L_{t}^{\frac{2(n+2)}{6-n} \left( \frac{6-n}{n-2} + \bar{\epsilon}^{'} \right)}
L_{x}^{\frac{2(n+2)}{6-n} \left( \frac{6-n}{n-2} + \bar{\epsilon}^{'} \right) }([0,T_l])}, \; \text{and} \\
\| u \|_{L_{t}^{\frac{2(n+2)}{6-n} \left( \frac{6-n}{n-2} + \bar{\epsilon}^{'} \right)}
L_{x}^{\frac{2(n+2)}{6-n} \left( \frac{6-n}{n-2} + \bar{\epsilon}^{'} \right)} ([0,T_l])}  \lesssim
\| D^{\bar{k}} u \|_{L_{t}^{\frac{2(n+2)}{6-n} \left( \frac{6-n}{n-2} + \bar{\epsilon}^{'} \right)} L_{x}^{r}([0,T_l])} \; \cdot
\end{array}
\nonumber
\end{equation}
We assume that $n=3$ until the end of this subsection. Let $\tilde{\epsilon}^{'} := \frac{4 (\bar{k} -1)}{_{3 - 2 \bar{k}}} $ and $ 0 < \tilde{\epsilon}^{''} < \tilde{\epsilon}^{'}$. Let $r$ be such that
$\frac{1}{_{\frac{n+2}{4-n} \left( \frac{2(4-n)}{n-2} + \tilde{\epsilon}^{'} \right)}} = \frac{1}{r} - \frac{\bar{k}}{n}$.
Observe that $ \left( \frac{n+2}{4-n} \left( \frac{2(4-n)}{n-2} + \tilde{\epsilon}^{'} \right), r \right) $ is admissible. We have

\begin{equation}
\begin{array}{l}
\| u \|_{L_{t}^{\frac{n+2}{4-n} \left( \frac{2(4-n)}{n-2} + \tilde{\epsilon}^{''} \right)}
L_{x}^{\frac{n+2}{4-n} \left( \frac{2(4-n)}{n-2} + \tilde{\epsilon}^{''} \right) }([0,T_l]) }  \lesssim
\| u \|^{\theta}_{L_{t}^{\frac{2(n+2)}{n-2}} L_{x}^{\frac{2(n+2)}{n-2}} ([0,T_l]) }  \\
\hspace{6.7cm} \| u \|^{1- \theta}_{L_{t}^{\frac{n+2}{4-n} \left( \frac{2(4-n)}{n-2} + \tilde{\epsilon}^{'} \right)}
L_{x}^{\frac{n+2}{4-n} \left( \frac{2(4-n)}{n-2} + \tilde{\epsilon}^{'} \right) } ([0,T_l]) }, \; \text{and} \\
\| u \|_{L_{t}^{\frac{n+2}{4-n} \left( \frac{2(4-n)}{n-2} + \tilde{\epsilon}^{'} \right)}
L_{x}^{\frac{n+2}{4-n} \left( \frac{2(4-n)}{n-2} + \tilde{\epsilon}^{'} \right)} ([0,T_l])}  \lesssim
\| D^{\bar{k}} u \|_{L_{t}^{\frac{n+2}{4-n} \left( \frac{2(4-n)}{n-2} + \tilde{\epsilon}^{'} \right)} L_{x}^{r}([0,T_l])} \; \cdot
\end{array}
\nonumber
\end{equation}

\subsection{The proof}
\label{Subsec:Proof}

We define

\begin{equation}
\begin{array}{ll}
X & := \mathcal{C} ( [0,T_{l}], \tilde{H}^{k} ) \cap L_{t}^{\frac{2(n+2)}{n}}  D^{-1} L_{x}^{\frac{2(n+2)}{n}} ([0,T_{l}]) \cap
L_{t}^{\frac{2(n+2)}{n}}  D^{-k} L_{x}^{\frac{2(n+2)}{n}} ([0,T_{l}]) \cap L_{t}^{\frac{2(n+2)}{n-2}} L_{x}^{\frac{2(n+2)}{n-2}} ([0,T_{l}]) \cdot
\end{array}
\end{equation}
We also define for $C^{'} > 1 $ large enough  $($ here and in the sequel $\mathcal{B}(Z;r)$ denotes the closed ball centered at the origin with radius $r$ in the normed space $Z$. $)$

\begin{equation}
\begin{array}{ll}
X_{1} & := \mathcal{B} \left( \mathcal{C} ( [0,T_{l}], \tilde{H}^{k} ) \cap L_{t}^{\frac{2(n+2)}{n}} D^{-1}  L_{x}^{\frac{2(n+2)}{n}} ([0,T_{l}])
\cap L_{t}^{\frac{2(n+2)}{n}} D^{-k} L_{x}^{\frac{2(n+2)}{n}} ([0,T_l])
 ; C' M
\right)
\end{array}
\end{equation}
and, for $ 0 <  \delta:= \delta(M) \ll 1 $ small enough

\begin{equation}
\begin{array}{ll}
X_{2} & := \mathcal{B} \left( L_{t}^{\frac{2(n+2)}{n-2}} L_{x}^{\frac{2(n+2)}{n-2}} ([0,T_{l}]); 2 \delta \right) \cdot
\end{array}
\end{equation}
$X_{1} \cap X_{2} $ is a closed space of the Banach space $X$: therefore it is also a Banach space. Let

\begin{equation}
\begin{array}{l}
\Psi : = X_{1} \cap X_{2} \rightarrow X_{1} \cap X_{2} \\
u  \rightarrow e^{i t \triangle} u_0 - i \int_{0}^{t} e^{i(t-t^{'}) \triangle} \left(  |u|^{\frac{4}{n-2}} (t^{'}) u(t^{'}) g(|u|(t')) \right) \, d t^{'}
\end{array}
\end{equation}

\begin{itemize}

\item $\Psi$ maps $X_{1} \cap X_{2}$ to $X_{1} \cap X_{2}$. \\
\\
By the Strichartz estimates (\ref{Eqn:Strich}) and Corollary \ref{Cor:Nonlin} we have

\begin{equation}
\begin{array}{ll}
\| u \|_{L_{t}^{\infty} \tilde{H}^{k} ([0,T_{l}])} + \| D  u \|_{L_{t}^{\frac{2(n+2)}{n}} L_{x}^{\frac{2(n+2)}{n}} ([0,T_{l}])} + \| D^{k} u
\|_{L_{t}^{\frac{2(n+2)}{n}} L_{x}^{\frac{2(n+2)}{n}} ([0,T_{l}])} & \lesssim M + \delta^{\frac{4}{n-2}-} M g(M)
\end{array}
\label{Eqn:ExistLoc1}
\end{equation}
Moreover

\begin{equation}
\begin{array}{ll}
\| u \|_{L_{t}^{\frac{2(n+2)}{n-2}} L_{x}^{\frac{2(n+2)}{n-2}} ([0,T_{l}])} -  \| e^{ i t \triangle} u_{0} \|_{L_{t}^{\frac{2(n+2)}{n-2}}
L_{x}^{\frac{2(n+2)}{n-2}} ([0,T_{l}])} & \lesssim  \left\| D (|u|^{\frac{4}{n-2}} u g(|u|)  ) \right\|_{L_{t}^{\frac{2(n+2)}{n+4}}
L_{x}^{\frac{2(n+2)}{n+4}} ([0,T_{l}])} \\
& \lesssim   \delta^{\frac{4}{n-2}-} M g(M)
\end{array}
\end{equation}
so that

\begin{equation}
\begin{array}{ll}
\| u \|_{L_{t}^{\frac{2(n+2)}{n-2}} L_{x}^{\frac{2(n+2)}{n-2}} ([0,T_{l}])} -  \delta & \lesssim \delta^{\frac{4}{n-2}-} M g(M)
\end{array}
\label{Eqn:ExistLoc2}
\end{equation}
Therefore $\Psi(X_{1} \cap X_{2}) \subset X_{1} \cap X_{2}$. \\

\item $\Psi$ is a contraction. \\
\\
Given $\tau \in [0,1]$, let $w_{\tau} := u + \tau (v -u)$. Let $h(z,\bar{z}) := |z|^{\frac{4}{n-2}} z g(|z|)$. \\
By the fundamental theorem of calculus, the product rule (see proof of Proposition \ref{Prop:FracLeibn}), and the Sobolev
embedding (\ref{Eqn:SobolevIneq1}) we get

\begin{equation}
\begin{array}{l}
\| \Psi(u) - \Psi(v) \|_{X}  \lesssim  \sum\limits_{\beta \in \left\{1, k \right\}}
\left\| D^{\beta} \left( h(u,\bar{u}) - h(v,\bar{v}) \right) \right\|_{L_{t}^{\frac{2(n+2)}{n+4}} L_{x}^{\frac{2(n+2)}{n+4}}([0,T_l])} \\
\lesssim \sup_{\tau \in [0,1]}
\left[
\begin{array}{l}
\| u - v \|_{L_{t}^{\frac{2(n+2)}{n-2}} L_{x}^{\frac{2(n+2)}{n-2}}([0,T_l])}
 \sum\limits_{\substack{ \beta \in \left\{ 1, k \right\} \\ \tilde{z} \in \{ z,\bar{z} \}}} \| D^{\beta} \partial_{\tilde{z}} h(w_{\tau},\overline{w_{\tau}}) \|_{L_{t}^{\frac{n+2}{3}} L_{x}^{\frac{n+2}{3}} ([0,T_l])} \\
+  \sum\limits_{\substack{\beta \in \left\{1, k \right\} \\ \tilde{z} \in \{ z,\bar{z} \} }}
\| D^{\beta} (u-v) \|_{L_{t}^{\frac{2(n+2)}{n}} L_{x}^{\frac{2(n+2)}{n}}([0,T_l])}
\left\| \partial_{\tilde{z}} h(w_{\tau},\overline{w_{\tau}}) \right\|_{L_{t}^{\frac{n+2}{2}} L_{x}^{\frac{n+2}{2}}([0,T_l])}
\end{array}
\right]
\end{array}
\nonumber
\end{equation}
\\
From the estimates in Subsection \ref{Subsec:Est} and elementary estimates of $g$ (such as the estimate of $g$ that we use in the proof
of Lemma \ref{Lem:BesovEst}) we get

\begin{equation}
\begin{array}{ll}
\left\| \partial_{\tilde{z}} h(w_{\tau},\overline{w_{\tau}}) \right\|_{L_{t}^{\frac{n+2}{2}} L_{x}^{\frac{n+2}{2}}([0,T_l])}
& \lesssim M \left( \| w_{\tau} \|^{\frac{4}{n-2}}_{L_{t}^{\frac{2(n+2)}{n-2}} L_{x}^{\frac{2(n+2)}{n-2}} ([0,T_l])  }
+ \| w_{\tau} \|^{\frac{4}{n-2} + \epsilon''}_{L_{t}^{\frac{n+2}{2} \left( \frac{4}{n-2} + \epsilon'' \right) }
L_{x}^{\frac{n+2}{2} \left( \frac{4}{n-2} + \epsilon'' \right) }([0,T_l])} \right) \\
& \lesssim \langle M \rangle^{C} \delta^{C} \cdot
\end{array}
\nonumber
\end{equation}
We have

\begin{equation}
\begin{array}{ll}
\left\| D \partial_{\tilde{z}} h(w_{\tau},\overline{w_{\tau}}) \right\|_{L_{t}^{\frac{n+2}{3}} L_{x}^{\frac{n+2}{3}}([0,T_l])}
& \approx \left\| \nabla \partial_{\tilde{z}} h(w_{\tau},\overline{w_{\tau}})  \right\|_{L_{t}^{\frac{n+2}{3}} L_{x}^{\frac{n+2}{3}}([0,T_l])} \\
& \lesssim \| \nabla w_{\tau} \|_{L_{t}^{\frac{2(n+2)}{n}} L_{x}^{\frac{2(n+2)}{n}}([0,T_l])} \times Exp \cdot
\end{array}
\nonumber
\end{equation}
Here $Exp: =
\|  w_{\tau} \|^{\frac{6-n}{n-2}}_{L_{t}^{\frac{2(n+2)}{n-2}} L_{x}^{\frac{2(n+2)}{n-2}}([0,T_l]) }
+ \| w_{\tau} \|^{\frac{6-n}{n-2} + \bar{\epsilon}^{''}}_{L_{t}^{\frac{2(n+2)}{6-n} \left( \frac{6-n}{n-2} + \bar{\epsilon}^{''} \right)}
L_{x}^{\frac{2(n+2)}{6-n} \left( \frac{6-n}{n-2} + \bar{\epsilon}^{''} \right) }([0,T_l])}
$.
We have

\begin{equation}
\begin{array}{ll}
\left\| D^{1+ \alpha} \partial_{\tilde{z}} h(w_{\tau},\overline{w_{\tau}}) \right\|_{L_{t}^{\frac{n+2}{3}} L_{x}^{\frac{n+2}{3}}([0,T_l])}
& \lesssim \sum \limits_{\widetilde{w_{\tau}} \in  \{ w_{\tau}, \overline{w_{\tau}} \}} X_{1,\widetilde{w_{\tau}}} + X_{2,\widetilde{w_{\tau}}},
\end{array}
\nonumber
\end{equation}
with $ X_{1,\widetilde{w_{\tau}}} = \left\| D^{\alpha} \nabla  \left(  |w_{\tau}|^{\frac{4}{n-2}} \frac{\widetilde{w_{\tau}}}{_{\overline{w_{\tau}}}} g(|w_{\tau}|)  \right) \right\|_{L_{t}^{\frac{n+2}{3}} L_{x}^{\frac{n+2}{3}}([0,T_l])} $ and \\
$X_{2,\widetilde{w_{\tau}}} =  \left\|  D^{\alpha}  \nabla \left( |w_{\tau}|^{\frac{4}{n-2}} \frac{\widetilde{w_{\tau}}}{_{\overline{w_{\tau}}}} |w_{\tau}| g^{'}(|w_{\tau}|) \right) \right\|_{L_{t}^{\frac{n+2}{3}} L_{x}^{\frac{n+2}{3}}([0,T_l])} $. \\
We only estimate $X_{1,\widetilde{w_{\tau}}}$: $X_{2,\widetilde{w_{\tau}}}$ is estimated similarly. Expanding the gradient we see that we have to estimate terms of the form

\begin{equation}
\begin{array}{l}
Y := \left\| D^{\alpha} \left( \nabla w_{\tau} G(w_{\tau},\overline{w_{\tau}}) g(|w_{\tau}|) \right) \right\|_{L_{t}^{\frac{n+2}{3}} L_{x}^{\frac{n+2}{3}} ([0,T_l])}, \; \\
Z := \left\| D^{\alpha} \left( \nabla w_{\tau} G(w_{\tau},\overline{w_{\tau}}) g'(|w_{\tau}|) |w_{\tau}| \right)  \right\|_{L_{t}^{\frac{n+2}{3}} L_{x}^{\frac{n+2}{3}} ([0,T_l])}, \\
\bar{Z} := \left\| D^{\alpha} \left( \nabla w_{\tau} G(w_{\tau},\overline{w_{\tau}} ) g^{''}(|w_{\tau}|) |w_{\tau}|^{2} \right)  \right\|_{L_{t}^{\frac{n+2}{3}} L_{x}^{\frac{n+2}{3}} ([0,T_l])}, \; \text{and}
\end{array}
\nonumber
\end{equation}
terms that are similar to $Y$, $Z$, and $\bar{Z}$ (hence they are estimated similarly to $Y$, $Z$, and $\bar{Z}$). Here $G$ denotes a function of which the
regularity properties depend on the dimension. We only estimate $Y$: $Z$ and $\bar{Z}$ are estimated similarly. \\
\\
Assume that $n=3$. \\
Hence $G$ is a $\mathcal{C}^{1}$ function such that
$\tilde{G}(x) := G(x) g(|x|)$ satisfies  $ \left| \tilde{G}^{'}(x) \right| \lesssim |x|^{\frac{4}{n-2} -2} g(|x|)$ and
$\left| \tilde{G}^{'} \left( \tau x + (1- \tau) y \right) \right| \lesssim \left| \tilde{G}^{'}(x)  \right| + \left| \tilde{G}^{'}(y)  \right|  $
for all $\tau \in [0,1]$ $($ abuse of notation: $\tilde{G}(x)$, $G(x)$, and $G^{'}(x)$ denote $\tilde{G}(x,\bar{x})$, $G(x,\bar{x})$, and
$G^{'}(x,\bar{x})$, respectively $)$. From the fractional Leibnitz rule (see proof of Proposition \ref{Prop:FracLeibn}) we see that

\begin{equation}
\begin{array}{ll}
Y & \lesssim \| D^{\alpha} w_{\tau} \|_{L_{t}^{\frac{2(n+2)}{n-2}} L_{x}^{\frac{2(n+2)}{n-2}}([0,T_l])}
\| \nabla w_{\tau} \|_{L_{t}^{\frac{2(n+2)}{n}} L_{x}^{\frac{2(n+2)}{n}} ([0,T_l])} \\
&
\left(
\begin{array}{l}
\| w_{\tau} \|^{\frac{2(4-n)}{n-2}}_{L_{t}^{\frac{2(n+2)}{n-2}} L_{x}^{\frac{2(n+2)}{n-2}}([0,T_l])}  \\
+ \| w_{\tau} \|^{\frac{2(4-n)}{n-2} + \tilde{\epsilon}^{''}}_{L_{t}^{\frac{n+2}{4-n} \left( \frac{2(4-n)}{n-2} +
\tilde{\epsilon}^{''} \right)}
L_{x}^{\frac{n+2}{4-n} \left( \frac{2(4-n)}{n-2} + \tilde{\epsilon}^{''} \right)}([0,T_l])}
\end{array}
\right)  \\
& + \| D^{1 + \alpha} w_{\tau} \|_{L_{t}^{\frac{2(n+2)}{n}} L_{x}^{\frac{2(n+2)}{n}} ([0,T_l])} \; \times \; Exp \\
& \lesssim \langle M \rangle^{C} \delta^{C} \cdot
\end{array}
\nonumber
\end{equation}
Assume that $n=4$. \\
Observe that for this value of $n$ we have $\frac{2(n+2)}{n-2} = \frac{2(n+2)}{6-n}$. Let $r_1$, $r_2$, and $r_3$ be such that $ \frac{1}{r_1} = \frac{n}{2(n+2)} - \frac{\alpha}{n}$, $\frac{1}{r_2} = \frac{6-n}{2(n+2)} + \frac{\alpha}{n}$, and
$\frac{1}{r_3} = \frac{n-2}{2(n+2)} + \frac{1}{n} = \frac{n^{2}+4}{2n (n+2)} $.  Observe that $ \frac{1}{\frac{2(n+2)}{n-2}} + \frac{n}{2r_{3}} = \frac{n}{4} $.
The estimates below show that we may choose without loss of generality $ G(z,\bar{z})$ to be equal to $z$. First assume that $k<2$. We have

\begin{equation}
\begin{array}{ll}
Y & \lesssim  \| \nabla w_{\tau} \|_{L_{t}^{\frac{2(n+2)}{n}} L_{x}^{r_1} ([0,T_l])}
\left\| D^{\alpha}  \left( w_{\tau} g(|w_{\tau}|) \right) \right\|_{L_{t}^{\frac{2(n+2)}{6-n}} L_{x}^{r_2}([0,T_l])} \\
& + \| D^{1+ \alpha} w_{\tau} \|_{L_{t}^{\frac{2(n+2)}{n}} L_{x}^{\frac{2(n+2)}{n}} ([0,T_l])}
\| w_{\tau} g(|w_{\tau}|) \|_{L_{t}^{\frac{2(n+2)}{6-n}} L_{x}^{\frac{2(n+2)}{6-n}} ([0,T_l])} \cdot
\end{array}
\nonumber
\end{equation}
We have $ \| w_{\tau} g(|w_{\tau}|) \|_{L_{t}^{\frac{2(n+2)}{6-n}} L_{x}^{\frac{2(n+2)}{6-n}} ([0,T_l])}
 \lesssim Exp$. We have

\begin{equation}
\begin{array}{l}
\| \nabla w_{\tau} \|_{L_{t}^{\frac{2(n+2)}{n}} L_{x}^{r_1} ([0,T_l])}  \lesssim
\| D^{k} w_{\tau} \|_{L_{t}^{\frac{2(n+2)}{n}} L_{x}^{\frac{2(n+2)}{n}} ([0,T_l])}, \; \text{and} \\
\| w_{\tau} \|_{L_{t}^{\infty} L_{x}^{1_{2}^{*}} ([0,T_{l}])} \lesssim \| \nabla w_{\tau} \|_{L_{t}^{\infty} L_{x}^{2}([0,T_{l}])} \cdot
\end{array}
\nonumber
\end{equation}
We also have

\begin{equation}
\begin{array}{l}
\left\| D^{\alpha} \left( w_{\tau} g(|w_{\tau}|) \right) \right\|_{L_{t}^{\frac{2(n+2)}{6-n}} L_{x}^{r_2} ([0,T_l])}
\lesssim \left\| \nabla \left( w_{\tau} g(|w_{\tau}|) \right) \right\|^{\theta}_{L_{t}^{\frac{2(n+2)}{6-n}} L_{x}^{r_3} ([0,T_l])}
\times  Exp^{1- \theta} ,
\end{array}
\nonumber
\end{equation}
and, from elementary estimates of $g$ (such as $g(|f|) + g^{'}(|f|) |f| \lesssim 1 + |f|^{\epsilon}$ for $ \epsilon > 0 $ ),

\begin{equation}
\begin{array}{ll}
\left\| \nabla \left( w_{\tau} g(|w_{\tau}|) \right)  \right\|_{L_{t}^{\frac{2(n+2)}{6-n}} L_{x}^{r_3} ([0,T_l])} & \lesssim
\left\| \nabla w_{\tau}  \right\|_{L_{t}^{\frac{2(n+2)}{6-n}} L_{x}^{r_3}([0,T_l])} +
\| \nabla w_{\tau} \|_{L_{t}^{\frac{2(n+2)}{6-n}} L_{x}^{r_3^{+}}([0,T_l])}
\| w_{\tau} \|^{C}_{L_{t}^{\infty} L_{x}^{1^{*}_{2}} ([0,T_l])} \cdot
\end{array}
\nonumber
\end{equation}
We also have

\begin{equation}
\begin{array}{ll}
\| \nabla w_{\tau} \|_{L_{t}^{\frac{2(n+2)}{6-n}} L_{x}^{r_3^{+}}([0,T_l])} & \lesssim
\| D^{1+} w_{\tau} \|_{L_{t}^{\frac{2(n+2)}{n-2}} L_{x}^{\frac{2n (n+2)}{n^{2} +4}} ([0,T_l])} \cdot
\end{array}
\nonumber
\end{equation}
Hence $ Y \lesssim \langle M \rangle^{C} \delta^{C} $. \\
Then assume that $k=2$. This case boils down to the previous one by replacing $\alpha$ with $1$ in all the estimates. \\
\\
Assume now that $n=5$. \\
Hence $G$ is an H\"older continuous function with exponent $\frac{1}{3}$ that is $\mathcal{C}^{1}$ (except at the origin) that satisfies
$|G(f,\bar{f})| \approx |f|^{\frac{1}{3}}$ and $|G^{'}(f,\bar{f})| \approx |f|^{-\frac{1}{3}}$. We only estimate $Y$: $Z$ is estimated similarly. Let $r$ and $\tilde{r}$ be such that $ \frac{1}{\tilde{r}} = \frac{n}{2(n+2)} - \frac{\alpha}{n} $ and $\frac{1}{\tilde{r}} + \frac{1}{r} = \frac{3}{n+2}$. We have $ Y \lesssim Y_1 + Y_{2} $ with

\begin{equation}
\begin{array}{l}
Y_{1} := \bar{Y}_{1} \| \nabla w_{\tau} \|_{L_{t}^{\frac{2(n+2)}{n}} L_{x}^{\tilde{r}-}([0,T_l])}, \; \text{and}  \\
Y_{2} := \left\| G(w_{\tau},\overline{w_{\tau}}) g(|w_{\tau}|) \right\|_{L_{t}^{\frac{2(n+2)}{6-n}} L_{x}^{\frac{2(n+2)}{6-n}} ([0,T_l])}
\| D^{1+ \alpha} w_{\tau} \|_{L_{t}^{\frac{2(n+2)}{n}} L_{x}^{\frac{2(n+2)}{n}} ([0,T_l])},
\end{array}
\nonumber
\end{equation}
with $ \bar{Y}_{1} := \left\| D^{\alpha} \left( G(w_{\tau},\overline{w_{\tau}}) g(|w_{\tau}|) \right) \right\|_{L_{t}^{\frac{2(n+2)}{6-n}} L_{x}^{r+} ([0,T_l])} $.
We have

\begin{equation}
\begin{array}{ll}
\| \nabla w_{\tau} \|_{L_{t}^{\frac{2(n+2)}{n}} L_{x}^{\tilde{r}-}([0,T_l])} & \lesssim
\| D^{k-} w_{\tau} \|_{L_{t}^{\frac{2(n+2)}{n}} L_{x}^{\frac{2(n+2)}{n}} ([0,T_{l}])} \\
& \lesssim \| D w_{\tau} \|^{\theta}_{L_{t}^{\frac{2(n+2)}{n}} L_{x}^{\frac{2(n+2)}{n}} ([0,T_{l}])}
\| D^{k} w_{\tau} \|^{1 - \theta}_{L_{t}^{\frac{2(n+2)}{n}} L_{x}^{\frac{2(n+2)}{n}} ([0,T_{l}])} \cdot
\end{array}
\nonumber
\end{equation}
Hence $ \| \nabla w_{\tau} \|_{L_{t}^{\frac{2(n+2)}{n}} L_{x}^{\tilde{r}-}([0,T_l])} \lesssim M $. We have

\begin{equation}
\begin{array}{ll}
Y_{2} & \lesssim \left( \| w_{\tau} \|^{\frac{6-n}{n-2}}_{L_{t}^{\frac{2(n+2)}{n-2}} L_{x}^{\frac{2(n+2)}{n-2}} ([0,T_l])} +
\| w_{\tau} \|^{\frac{6-n}{n-2} + \bar{\epsilon}^{''}}_{L_{t}^{\frac{2(n+2)}{6-n} \left( \frac{6-n}{n-2} + \bar{\epsilon}^{''} \right)}
L_{x}^{\frac{2(n+2)}{6-n} \left( \frac{6-n}{n-2} + \bar{\epsilon}^{''} \right)} ([0,T_l])} \right)  \\
& \| D^{1+ \alpha} w_{\tau} \|_{L_{t}^{\frac{2(n+2)}{n}} L_{x}^{\frac{2(n+2)}{2}} ([0,T_l])} \\
& \lesssim \langle M \rangle^{C} \delta^{C} \cdot
\end{array}
\nonumber
\end{equation}
We then estimate $\bar{Y}_1$. In the sequel, if $x \in \mathbb{R}$ then, in order to simplify the notation, we allow the value of $x+$ (resp. $x-$) to change from one line to the other one and even within the same line; the values of $x^{+}$ and those of $x^{-}$ are chosen such that all the estimates below are true. From the definition of Besov spaces using the Paley-Littlewood projectors, a Paley-Littlewood decomposition into low frequencies and high frequencies,  and elementary estimates such as Bernstein inequalities and H\"older inequality for sequences $\bar{Y}_{1} \lesssim \bar{Y}_{1,a} + \bar{Y}_{1,b} $ with
$\bar{Y}_{1,a} :=  \left\|  G(w_{\tau},\overline{w_{\tau}})  g(|w_{\tau}|) \right\|_{L_{t}^{\frac{2(n+2)}{6-n}} \dot{B}^{(\alpha-)}_{r+,r+} ([0,T_l])} $ and
$\bar{Y}_{1,b} :=  \left\| G(w_{\tau},\overline{w_{\tau}}) g(|w_{\tau}|) \right\|_{L_{t}^{\frac{2(n+2)}{6-n}} \dot{B}^{(\alpha+)}_{r+,r+} ([0,T_l])}$.
\\
\\
We first estimate $\bar{Y}_{1,b}$. From Lemma \ref{Lem:BesovEst} we get

\begin{equation}
\begin{array}{ll}
\bar{Y}_{1,b}   \lesssim \| w_{\tau} \|^{\frac{1}{3}}_{L_{t}^{\frac{2(n+2)}{n-2}} \dot{B}^{(3 \alpha)+}_{\frac{r}{3}+, \frac{r}{3}+} ([0,T_{l}])}
+ \| w_{\tau} \|^{\frac{1}{3}+}_{L_{t}^{\frac{2(n+2)}{n-2}+} \dot{B}^{(3 \alpha)+}_{\frac{r}{3}+, \frac{r}{3}+}([0,T_{l}])} \cdot
\end{array}
\nonumber
\end{equation}
Hence by using again the definition of Besov spaces using the Paley-Littlewood projectors $\bar{Y}_{1,b}$ is bounded by powers of terms of the form
$ A := \left\| D^{(3 \alpha)+} w_{\tau} \right\|_{L_{t}^{\frac{2(n+2)}{n-2}}  L_{x}^{\frac{r}{3}+} ([0,T_{l}])} $
and $ B := \left\| D^{(3 \alpha)+} w_{\tau} \right\|_{L_{t}^{\frac{2(n+2)}{n-2}+} L_{x}^{\frac{r}{3}+} ([0,T_{l}]) } $. We have

\begin{equation}
\begin{array}{ll}
A & \lesssim  \| w_{\tau} \|^{\theta}_{L_{t}^{\frac{2(n+2)}{n-2}} L_{x}^{\frac{2(n+2)}{n-2}} ([0,T_{l}]) }
\| D^{1+} w_{\tau} \|^{ 1- \theta}_{L_{t}^{\frac{2(n+2)}{n-2}} L_{x}^{\frac{2n(n+2)}{n^{2}+4}} ([0,T_{l}]) }, \; \text{and} \\
B & \lesssim  \| w_{\tau} \|^{\theta}_{L_{t}^{\frac{2(n+2)}{n-2}+} L_{x}^{\frac{2(n+2)}{n-2}-} ([0,T_{l}]) }
\| D^{1+} w_{\tau} \|^{ 1- \theta}_{L_{t}^{\frac{2(n+2)}{n-2}+} L_{x}^{\frac{2n(n+2)}{n^{2}+4}-} ([0,T_{l}]) } \cdot
\end{array}
\label{Eqn:BoundAB}
\end{equation}
If $(q_{1}, r_{1}) :=  \left( \frac{2(n+2)}{n-2}, \frac{2n(n+2)}{n^{2}+4} \right) $
or $(q_{1}, r_{1}) :=  \left( \frac{2(n+2)}{n-2}+, \frac{2n(n+2)}{n^{2}+4}- \right) $ then

\begin{equation}
\begin{array}{ll}
\| D^{1+} w_{\tau} \|_{L_{t}^{q_{1}} L_{x}^{r_{1}} ([0,T_{l}])} & \lesssim
\| D^{1+} w_{\tau} \|^{\theta}_{L_{t}^{\frac{2(n+2)}{n}} L_{x}^{\frac{2(n+2)}{n}} ([0,T_{l}])} \| D^{1+} w_{\tau} \|^{1- \theta}_{L_{t}^{\infty} L_{x}^{2} ([0,T_{l}])} \cdot
\end{array}
\nonumber
\end{equation}
We also have

\begin{equation}
\begin{array}{ll}
\| w_{\tau} \|_{L_{t}^{\frac{2(n+2)}{n-2}+} L_{x}^{\frac{2(n+2)}{n-2}-} ([0,T_{l}]) } & \lesssim
\| w_{\tau} \|^{\theta}_{L_{t}^{\frac{2(n+2)}{n-2}} L_{x}^{\frac{2(n+2)}{n-2}} ([0,T_{l}]) } \| w_{\tau} \|^{1- \theta}_{L_{t}^{\infty} L_{x}^{1_{2}^{*}} ([]0,T_{l})} \\
& \lesssim \| w_{\tau} \|^{\theta}_{L_{t}^{\frac{2(n+2)}{n-2}} L_{x}^{\frac{2(n+2)}{n-2}} ([0,T_{l}]) } \| D w_{\tau} \|^{1- \theta}_{L_{t}^{\infty} L_{x}^{2} ([0,T_{l}])} \cdot
\end{array}
\nonumber
\end{equation}
Hence $\bar{Y}_{1,b} \lesssim  \langle M \rangle^{C} \delta^{C}$. \\ We then estimate $\bar{Y}_{1,a}$ in a similar fashion. We have

\begin{equation}
\begin{array}{ll}
\bar{Y}_{1,a} &  \lesssim \| w_{\tau} \|^{\frac{1}{3}}_{L_{t}^{\frac{2(n+2)}{n-2}} \dot{B}^{(3 \alpha)-}_{\frac{r}{3}+, \frac{r}{3}+} ([0,T_{l}])}
+ \| w_{\tau} \|^{\frac{1}{3}+}_{L_{t}^{\frac{2(n+2)}{n-2}+} \dot{B}^{(3 \alpha)-}_{\frac{r}{3}+, \frac{r}{3}+}([0,T_{l}])} \cdot
\end{array}
\nonumber
\end{equation}
Hence $\bar{Y}_{1,a}$ is bounded by powers of terms of the form
$ A := \left\| D^{(3 \alpha)-} w_{\tau} \right\|_{L_{t}^{\frac{2(n+2)}{n-2}}  L_{x}^{\frac{r}{3}+} ([0,T_{l}])} $
and $ B := \left\| D^{(3 \alpha)-} w_{\tau} \right\|_{L_{t}^{\frac{2(n+2)}{n-2}+} L_{x}^{\frac{r}{3}+} ([0,T_{l}]) } $. Since $A$ and $B$ are bounded by
the same bounds that appear in (\ref{Eqn:BoundAB}), we get $ \bar{Y}_{1,a} \lesssim  \langle M \rangle^{C} \delta^{C} $.

\end{itemize}

\vspace{1cm}

\textbf{Funding}: Part of this work was done in Japan. The work in
Japan was supported by a JSPS Kakenhi [ 15K17570 to T.R. ] \\
\\
\textbf{Acknowledgments}: The author would like to thank Nobu Kishimoto for discussions related to this problem.

\end{document}